\numberwithin{equation}{section}
\newtheorem{theorem}[equation]{Theorem}
\newtheorem*{theorem*}{Theorem} \newtheorem{lemma}[equation]{Lemma}
\newtheorem*{conjecture*}{Mamma Conjecture}
\newtheorem*{conjecture1*}{Mamma Conjecture (revisited)}
\newtheorem{proposition}[equation]{Proposition}
\newtheorem{corollary}[equation]{Corollary}
\newtheorem*{corollary*}{Corollary}
\theoremstyle{remark}
\newtheorem{definition}[equation]{Definition}
\newtheorem{example}[equation]{Example}
\theoremstyle{remark}
\newtheorem{remark}[equation]{Remark}
\newcommand{\cA}{{\mathcal A}}
\newcommand{\cB}{{\mathcal B}}
\newcommand{\cC}{{\mathcal C}}
\newcommand{\cD}{{\mathcal D}}
\newcommand{\cE}{{\mathcal E}}
\newcommand{\cF}{{\mathcal F}}
\newcommand{\cL}{{\mathcal L}}
\newcommand{\cN}{{\mathcal N}}
\newcommand{\cO}{{\mathcal O}}
\newcommand{\cS}{{\mathcal S}}
\newcommand{\cT}{{\mathcal T}}
\newcommand{\cU}{{\mathcal U}}
\newcommand{\cG}{\mathrm{G}}
\newcommand{\cH}{\mathrm{H}}
\newcommand{\bbA}{\mathbb{A}}
\newcommand{\bbC}{\mathbb{C}}
\newcommand{\bbG}{\mathbb{G}}
\newcommand{\bbL}{\mathbb{L}}
\newcommand{\bbP}{\mathbb{P}}
\newcommand{\bbQ}{\mathbb{Q}}
\newcommand{\bbZ}{\mathbb{Z}}
\DeclareMathOperator{\SmProj}{SmProj} 
\DeclareMathOperator{\id}{id}
\DeclareMathOperator{\NChow}{NChow} 
\DeclareMathOperator{\NNum}{NNum} 
\DeclareMathOperator{\Sep}{Sep}
\DeclareMathOperator{\Num}{Num} 
\DeclareMathOperator{\Fun}{Fun} 
\newcommand{\dgcat}{\mathrm{dgcat}} 
\newcommand{\perf}{\mathrm{perf}}
\newcommand{\Chow}{\mathrm{Chow}}
\newcommand{\dg}{\mathrm{dg}}
\newcommand{\Hom}{\mathrm{Hom}}
\newcommand{\End}{\mathrm{End}}
\newcommand{\rep}{\mathrm{rep}}
\newcommand{\dgHo}{\mathrm{H}^0}
\newcommand{\Hmo}{\mathrm{Hmo}}
\newcommand{\op}{\mathrm{op}}
\newcommand{\too}{\longrightarrow}
\newcommand{\ie}{\textsl{i.e.}\ }
\newcommand{\eg}{\textsl{e.g.}}
\def\Sep{\operatorname{Sep}}
\let\oldmarginpar\marginpar
\def\marginpar#1{\oldmarginpar{\tiny #1}}
\begin{document}

\title[Equivariant noncommutative motives]{Equivariant noncommutative motives}
\author{Gon{\c c}alo~Tabuada}

\address{Gon{\c c}alo Tabuada, Department of Mathematics, MIT, Cambridge, MA 02139, USA}
\email{tabuada@math.mit.edu}
\urladdr{http://math.mit.edu/~tabuada}
\thanks{The author was partially supported by a NSF CAREER Award}

\subjclass[2000]{14A22, 14C15, 14L30, 16S35, 19L47, 55N32}
\date{\today}

\keywords{$\cG$-scheme, $\cG$-algebra, $2$-cocycle, equivariant perfect complex, semidirect product algebra, twisted group algebra, equivariant (noncommutative) motives, equivariant algebraic $K$-theory, orbifold cohomology theory, twisted projective homogeneous scheme, full exceptional collection, equivariant motivic measure, noncommutative algebraic geometry.}

\abstract{Given a finite group $\cG$, we develop a theory of $\cG$-equivariant noncommutative motives. This theory provides a well-adapted framework for the study of $\cG$-schemes, Picard groups of schemes, $\cG$-algebras, $2$-cocycles, $\cG$-equivariant algebraic $K$-theory, orbifold cohomology theory, etc. Among~other results, we relate our theory with its commutative counterpart as well as with Panin's theory. As a first application, we extend Panin's computations, concerning twisted projective homogeneous varieties, to a large class of invariants. As a second application, we prove that whenever the category of perfect complexes of a $\cG$-scheme $X$ admits a full exceptional collection of $\cG$-invariant ($\neq$ $\cG$-equivariant) objects, the $\cG$-equivariant Chow motive of $X$ is of Lefschetz type. Finally, we construct a $\cG$-equivariant motivic measure with values in the Grothendieck ring of $\cG$-equivariant noncommutative Chow motives.}
}

\maketitle
\tableofcontents



\section{Introduction}

A {\em differential graded (=dg) category $\cA$}, over a base field $k$, is a category enriched over dg $k$-vector spaces; see \S\ref{sec:dg}. Every (dg) $k$-algebra $A$ gives naturally rise to a dg category with a single object. Another source of examples is provided by schemes since the category of perfect complexes $\perf(X)$ of every quasi-compact quasi-separated $k$-scheme $X$ admits a canonical dg enhancement $\perf_\dg(X)$;~see~\S\ref{sub:perfect1}. 

Given a finite group $\cG$, we develop in \S\ref{sec:actions} a general theory of group actions on dg categories. A dg category $\cA$ equipped with a $\cG$-action is denoted by $\cG \circlearrowright \cA$ and called a {\em $\cG$-dg category}. For example, every $\cG$-scheme $X$, subgroup $\cG \subseteq \mathrm{Pic}(X)$ of the Picard group a scheme $X$, $\cG$-algebra $A$, and cohomology class $[\alpha] \in H^2(\cG, k^\times)$, gives naturally rise to a $\cG$-dg category; the cohomology classes correspond to the $\cG$-actions $\cG \circlearrowright_\alpha k$ on the base field $k$. The associated dg categories of $\cG$-equivariant objects $\cA^\cG$ are given, respectively, by equivariant perfect complexes $\perf^\cG_\dg(X)$, perfect complexes $\perf_\dg(Y)$ on a $|\cG|$-fold cover over $X$, semidirect product algebras $A \rtimes \cG$, and twisted group algebras $k_\alpha[\cG]$.

By precomposition with the functor $\cG \circlearrowright \cA \mapsto \cA^\cG$, all invariants of dg categories $E$ can be promoted to invariants of $\cG$-dg categories $E^\cG$. For example, algebraic $K$-theory leads to equivariant algebraic $K$-theory in the sense of Thomason \cite{Thomason}, and periodic cyclic homology to orbifold cohomology theory in the sense of Chen-Ruan \cite{CR}; see \S\ref{sub:additive}. In order to study all these invariants simultaneously, we develop in \S\ref{sec:equivariant} a theory of $\cG$-equivariant noncommutative motives. Among other results, we construct a symmetric monoidal functor $U^\cG\colon \dgcat^\cG_{\mathrm{sp}}(k) \to \NChow^\cG(k)$, from smooth proper $\cG$-dg categories to $\cG$-equivariant noncommutative Chow motives, which is ``initial'' among all such invariants $E^\cG$. The morphisms of $\NChow^\cG(k)$ are given in terms of the $\cG$-equivariant Grothendieck group of certain triangulated categories of bimodules. In particular, the ring of endomorphisms of the $\otimes$-unit $U^\cG(\cG \circlearrowright_1 k)$ identifies with the representation ring $R(\cG)$ of the group $\cG$.

I. Panin constructed in \cite{Panin} a certain motivic category $\cC^\cG(k)$, which mixes smooth projective $\cG$-schemes with (noncommutative) separable algebras, and performed therein several computations concerning twisted projective homogeneous varieties. In Theorem \ref{thm:bridge-Panin} we construct a fully faithful symmetric monoidal functor from $\cC^\cG(k)$ to $\NChow^\cG(k)$. As a byproduct, we extend Panin's computations to all the aforementioned invariants $E^\cG$; see Theorem \ref{thm:Panin}.

Making use of the work of Edidin-Graham \cite{EG} on equivariant intersection theory, Laterveer \cite{Laterveer}, and Iyer and M\"uller-Stack \cite{Iyer-Muller}, extended the theory of Chow motives to the $\cG$-equivariant setting. In Theorem \ref{thm:bridge}, we relate this latter theory with the one of $\cG$-equivariant noncommutative motives. Concretely, we construct a $\bbQ$-linear, fully faithful, symmetric monoidal $\Phi$ making the following diagram commute
\begin{equation}\label{eq:diagram-bridge-big}
\xymatrix{
\mathrm{SmProj}^\cG(k)^\op \ar[rrr]^-{X \mapsto \cG \circlearrowright \perf_\dg(X)} \ar[d]_-{\mathfrak{h}^\cG(-)_\bbQ} &&& \dgcat_{\mathrm{sp}}^\cG(k) \ar[d]^-{U^\cG(-)_\bbQ} \\
\Chow^\cG(k)_\bbQ \ar[d]_-\pi &&& \NChow^\cG(k)_\bbQ \ar[d]^-{(-)_{I_\bbQ}} \\
\Chow^\cG(k)_\bbQ/_{\!-\otimes \bbQ(1)} \ar[rrr]_-{\Phi} &&& \NChow^\cG(k)_{\bbQ, I_\bbQ}\,,
}
\end{equation}
where $\Chow^\cG(k)_\bbQ/_{\!-\otimes \bbQ(1)}$ stands for the orbit category (see \S\ref{sub:orbit}) and $(-)_{I_\bbQ}$ for the localization functor associated to the augmentation ideal $I \subset R(\cG) \stackrel{\mathrm{rank}}{\twoheadrightarrow} \bbZ$. Intuitively speaking, the commutative diagram \eqref{eq:diagram-bridge-big} shows that after ``$\otimes$-trivializing'' the $\cG$-equivariant Tate motive $\bbQ(1)$ and localizing at the augmentation ideal $I_\bbQ$, the commutative world embeds fully faithfully into the noncommutative world. 

The Grothendieck ring of varieties admits a $\cG$-equivariant analogue $K_0\mathrm{Var}^\cG(k)$. Although very important, the structure of this latter ring is quite mysterious. In order to capture some of its flavor, several $\cG$-equivariant motivic measures have been built. In Theorem \ref{thm:measure}, we prove that the assignment $X \mapsto U^\cG(\cG \circlearrowright \perf_\dg(X))$, with $X$ a smooth projective $\cG$-variety, gives rise to a $\cG$-equivariant motivic measure $
\mu_{\mathrm{nc}}^\cG\colon K_0\mathrm{Var}^\cG(k) \to K_0(\mathrm{NChow}^\cG(k))$
with values in the Grothendieck ring of the category of $\cG$-equivariant noncommutative Chow motives. It turns out that $\mu_{\mathrm{nc}}^\cG$ contains a lot of interesting information. For example, when $k\subseteq\bbC$, the $\cG$-equivariant motivic measure $K_0\mathrm{Var}^\cG(k) \to R_\bbC(\cG), X \mapsto \sum_i (-1)^i H^i_c(X^{\mathrm{an}},\bbC)$, factors through $\mu_{\mathrm{nc}}^\cG$; see Proposition \ref{prop:factorization-measure}.
\subsection*{Applications}
Let $X$ be a smooth projective $\cG$-scheme. In order to study it, we can proceed into two distinct directions. On one direction, we can associate to $X$ its $\cG$-equivariant Chow motive $\mathfrak{h}^\cG(X)_\bbQ$. On another direction, we can associate to $X$ its $\cG$-category of perfect complexes $\cG \circlearrowright \perf(X)$. Making use of the bridge \eqref{eq:diagram-bridge-big}, we establish the following relation\footnote{Theorem \ref{thm:via} is a far reaching generalization of the main result of \cite{Crelle}.} between these two~distinct~mathematical~objects:
\begin{theorem}\label{thm:via}
If $\perf(X)$ admits a full exceptional collection $(\cE_1, \ldots, \cE_n)$ of {\em $\cG$-invariant objects}, \ie $\sigma^\ast(\cE_i)\simeq \cE_i$ for every $\sigma \in \cG$, then there exists a choice of integers $r_1, \ldots, r_n \in  \{0, \ldots, \mathrm{dim}(X)\}$ such that 
\begin{equation}\label{eq:decomp-motivic}
\mathfrak{h}^\cG(X)_\bbQ \simeq \bbL^{\otimes r_1} \oplus \cdots \oplus \bbL^{\otimes r_n}\,,
\end{equation}
where $\bbL$ stands for the $\cG$-equivariant Lefschetz motive.
\end{theorem}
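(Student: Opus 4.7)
The plan is to leverage the bridge \eqref{eq:diagram-bridge-big} in order to transfer a noncommutative decomposition of $\cG \circlearrowright \perf_\dg(X)$ to the desired decomposition of $\mathfrak{h}^\cG(X)_\bbQ$. I would first observe that the full exceptional collection $(\cE_1,\ldots,\cE_n)$ induces a semi-orthogonal decomposition $\perf_\dg(X) = \langle \langle \cE_1 \rangle_\dg, \ldots, \langle \cE_n \rangle_\dg \rangle$ whose pieces are $\cG$-stable (thanks to the $\cG$-invariance $\sigma^\ast(\cE_i) \simeq \cE_i$). For each $i$, choosing isomorphisms $\phi^i_\sigma \colon \sigma^\ast(\cE_i) \isoto \cE_i$, the failure of strict compatibility $\phi^i_\sigma \circ \sigma^\ast(\phi^i_\tau) = c_i(\sigma,\tau)\, \phi^i_{\sigma\tau}$ yields a 2-cocycle $[\alpha_i] \in \Homol^2(\cG, k^\times)$; via the formalism of \S\ref{sec:actions}, the resulting $\cG$-dg category $\cG \circlearrowright \langle \cE_i \rangle_\dg$ is then $\cG$-equivariantly Morita equivalent to $\cG \circlearrowright_{\alpha_i} k$. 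Applying the symmetric monoidal functor $U^\cG(-)_\bbQ$ will give
$$U^\cG(\cG \circlearrowright \perf_\dg(X))_\bbQ \;\simeq\; \bigoplus_{i=1}^n U^\cG(\cG \circlearrowright_{\alpha_i} k)_\bbQ$$
in $\NChow^\cG(k)_\bbQ$.

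The next step, which I expect to be the main technical obstacle, is a twist-insensitivity statement after localization at the augmentation ideal $I_\bbQ \subset R(\cG)_\bbQ$:
$$U^\cG(\cG \circlearrowright_{\alpha_i} k)_{\bbQ, I_\bbQ} \;\simeq\; U^\cG(\cG \circlearrowright_1 k)_{\bbQ, I_\bbQ}\,,$$
so that every 2-cocycle becomes invisible in $\NChow^\cG(k)_{\bbQ, I_\bbQ}$. To prove this I would compute the relevant Hom-groups, which by construction are $\cG$-equivariant Grothendieck groups of bimodule categories carrying a natural $R(\cG)$-module structure, and show that the Brauer class $[\alpha_i]$ becomes cohomologically trivial after $I_\bbQ$-localization. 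As an alternative route, one might deduce the identification by comparing with Panin's category via the fully faithful functor of Theorem \ref{thm:bridge-Panin}, where the relevant computations are carried out explicitly.

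Granting both steps, the commutativity of diagram \eqref{eq:diagram-bridge-big} will produce a chain of isomorphisms
$$\Phi\big(\pi(\mathfrak{h}^\cG(X)_\bbQ)\big) \;\simeq\; U^\cG(\cG \circlearrowright \perf_\dg(X))_{\bbQ, I_\bbQ} \;\simeq\; \bigoplus_{i=1}^n U^\cG(\cG \circlearrowright_1 k)_{\bbQ, I_\bbQ} \;\simeq\; \Phi\big(\pi(\bbQ(0)^{\oplus n})\big)$$
in $\NChow^\cG(k)_{\bbQ, I_\bbQ}$, from which the full faithfulness of $\Phi$ will yield $\pi(\mathfrak{h}^\cG(X)_\bbQ) \simeq \pi(\bbQ(0)^{\oplus n})$ in the orbit category $\Chow^\cG(k)_\bbQ / _{\!-\otimes \bbQ(1)}$. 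To conclude, I would unwind the definition of the orbit category: its morphisms decompose as finite sums indexed by Tate twists, so the above isomorphism is represented by a pair of matrices of maps of the form $\bbL^{\otimes r} \leftrightarrows \mathfrak{h}^\cG(X)_\bbQ$ whose composites yield complementary idempotents in $\End_{\Chow^\cG(k)_\bbQ}(\mathfrak{h}^\cG(X)_\bbQ)$. Splitting these idempotents in the pseudo-abelian category $\Chow^\cG(k)_\bbQ$ will identify $\mathfrak{h}^\cG(X)_\bbQ$ with a direct sum of Tate twists $\bbL^{\otimes r_i}$, and the bound $r_i \in \{0,\ldots,\dim(X)\}$ is then forced by the vanishing of Chow groups of a smooth projective $\cG$-variety outside codimensions $[0,\dim(X)]$.
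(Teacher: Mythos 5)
Your proposal follows essentially the same route as the paper: decompose $U^\cG(\cG \circlearrowright \perf_\dg(X))$ into $\oplus_i U^\cG(\cG \circlearrowright_{\alpha_i} k)$ using the $2$-cocycles attached to the $\cG$-invariant exceptional objects (Proposition~\ref{prop:exceptional-schemes}), erase the twists after $I_\bbQ$-localization (Proposition~\ref{prop:same}), transport the resulting isomorphism across the fully faithful bridge $\Phi$ to obtain $\pi(\mathfrak{h}^\cG(X)_\bbQ)\simeq\oplus^n\pi(\mathfrak{h}^\cG(\mathrm{Spec}(k))_\bbQ)$ in the orbit category, and then lift along $\pi$ using the vanishing of equivariant Chow groups outside $[0,\dim X]$ together with the orthogonality $\Hom(\bbL^{\otimes p},\bbL^{\otimes q})=\delta_{pq}\bbQ$. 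The only small imprecision is the phrase ``complementary idempotents in $\End(\mathfrak{h}^\cG(X)_\bbQ)$'': what the unwinding of the orbit-category isomorphism actually produces is a retraction exhibiting $\mathfrak{h}^\cG(X)_\bbQ$ as a direct summand of $\oplus_{r=0}^{\dim X}\oplus_{j=1}^n\bbL^{\otimes r}$, after which the orthogonality of Lefschetz powers and the fact that $\pi(\mathfrak{h}^\cG(X)_\bbQ)$ has length $n$ pin down a sub-sum with exactly $n$ terms.
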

\begin{remark}
A $\cG$-equivariant object is $\cG$-invariant, but the converse does not holds!
\end{remark}
Theorem \ref{thm:via} can be applied to any $\cG$-action on projective spaces, quadrics, Grassmannians, etc; see \S\ref{sub:invariants}. Intuitively speaking, it shows that the existence of a full exceptional collection of $\cG$-invariant objects ``quasi-determines'' the $\cG$-equivariant Chow motive $\mathfrak{h}^\cG(X)_\bbQ$. The unique indeterminancy is the number of $\otimes$-powers of the $\cG$-equivariant Lefschetz motive. Note that this indeterminancy cannot by refined. For example, the categories $\perf(\mathrm{Spec}(k)\amalg \mathrm{Spec}(k))$ and $\perf(\bbP^1)$ (equipped with the trivial $\cG$-action) admit full exceptional collections of length $2$ but the corresponding $\cG$-equivariant Chow motives are distinct:
$$\mathfrak{h}^\cG(\mathrm{Spec}(k) \amalg \mathrm{Spec}(k))_\bbQ \simeq \mathfrak{h}^\cG(\mathrm{Spec}(k))_\bbQ^{\oplus 2} \not\simeq \mathfrak{h}^\cG(\mathrm{Spec}(k))_\bbQ \oplus \bbL \simeq \mathfrak{h}^\cG(\bbP^1)_\bbQ\,.$$
\begin{corollary}\label{cor:exceptional}
For every good $\cG$-cohomology theory $H^\ast_\cG$ in the sense of Laterveer \cite[Def.~1.10]{Laterveer}, we have $H^i_\cG(X)=0$ if $i$ is odd and $\sum_i \mathrm{dim}\,H^i_\cG(X)=n$.
\end{corollary}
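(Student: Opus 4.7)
The plan is to reduce the corollary to Theorem \ref{thm:via} by pushing the motivic decomposition through the cohomology theory $H^\ast_\cG$, and then invoking the two basic axioms built into Laterveer's notion of a ``good'' $\cG$-cohomology theory, namely additivity on direct sums and the explicit computation of $H^\ast_\cG(\bbL^{\otimes r})$.

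First I would apply Theorem \ref{thm:via} to the smooth projective $\cG$-scheme $X$. This furnishes integers $r_1,\dots,r_n\in\{0,\dots,\dim(X)\}$ together with an isomorphism
$$\mathfrak{h}^\cG(X)_\bbQ \;\simeq\; \bbL^{\otimes r_1}\oplus\cdots\oplus\bbL^{\otimes r_n}$$
in $\Chow^\cG(k)_\bbQ$. Applying $H^\ast_\cG$ and using that a good $\cG$-cohomology theory converts direct sums of $\cG$-equivariant Chow motives into direct sums of graded $\bbQ$-vector spaces, I obtain
$$H^i_\cG(X) \;\simeq\; \bigoplus_{j=1}^n H^i_\cG(\bbL^{\otimes r_j}) \qquad \text{for every } i\in\bbZ.$$
Next I would invoke the Tate-twist normalization from \cite[Def.~1.10]{Laterveer}: for every $r\geq 0$, the graded vector space $H^\ast_\cG(\bbL^{\otimes r})$ is one-dimensional and concentrated in even cohomological degree $2r$. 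Combined with the previous identity, this immediately yields $H^i_\cG(X)=0$ whenever $i$ is odd, and
$$\sum_i \dim H^i_\cG(X) \;=\; \sum_{j=1}^n \dim H^{2r_j}_\cG(\bbL^{\otimes r_j}) \;=\; n,$$
which is precisely the length of the original full exceptional collection.

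The whole argument is a purely formal consequence of Theorem \ref{thm:via} together with Laterveer's axioms, so I do not expect any real obstacle. The only bookkeeping worth double-checking is that \emph{both} additivity on direct sums and the above description of $H^\ast_\cG(\bbL^{\otimes r})$ are genuinely part of the definition of a good $\cG$-cohomology theory in \loccit; once this is confirmed, the corollary follows with no further work.
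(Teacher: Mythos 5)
Your argument is correct and follows the same route as the paper: apply Theorem \ref{thm:via}, push the resulting decomposition through $H^\ast_\cG$ (which by Laterveer's Proposition 1.12 factors additively through $\Chow^\cG(k)_\bbQ$), and conclude from the normalization $\dim H^2_\cG(\bbL)=1$, $H^i_\cG(\bbL)=0$ for $i\neq 2$. The paper expresses the middle step slightly differently, writing $H^\ast_\cG(\bbL)^{\otimes r_j}$ rather than $H^\ast_\cG(\bbL^{\otimes r_j})$ and then using only the degree-$2$ description of $H^\ast_\cG(\bbL)$, but this is the same computation.
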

\begin{proof}
As proved in \cite[Prop.~1.12]{Laterveer}, $H^\ast_\cG$ factors through $\Chow^\cG(k)_\bbQ$. Making use of Theorem \ref{thm:via}, we conclude that $H^\ast_\cG(X)\simeq H^\ast_\cG(\bbL)^{\otimes r_1}\oplus \cdots \oplus H^\ast_\cG(\bbL)^{\otimes r_n}$. The proof follows now from the fact that $\mathrm{dim}\,H^2_\cG(\bbL)=1$ and that $H^i_\cG(\bbL)\simeq 0$~for~$i \neq 2$.
\end{proof}
\begin{remark}
Corollary \ref{cor:exceptional} implies that the length of an hypothetical full exceptional collection of $\cG$-invariant objects is equal to $\sum_i \mathrm{dim}\, H^i_\cG(X)$. Moreover, if $H^i_\cG(X)\not\simeq 0$ for some odd integer $i$, then such a full exceptional collection cannot exist. 
\end{remark}
Theorem \ref{thm:via} shows also that the $\cG$-equivariant Chow motive $\mathfrak{h}^\cG(X)_\bbQ$ loses all the information concerning the $\cG$-action on $X$. In contrast, the $\cG$-equivariant noncomutative Chow motive $U^\cG(\cG \circlearrowright \perf_\dg(X))$ keeps track of some of the $\cG$-action! Concretely, as proved in Proposition \ref{prop:exceptional-schemes}, there exist (non-trivial) cohomology classes $[\alpha_1], \ldots, [\alpha_n] \in H^2(\cG, k^\times)$ such that 
\begin{equation}\label{eq:motivic-decomp-NC}
U^\cG(\cG \circlearrowright\perf_\dg(X))\simeq U^\cG(\cG \circlearrowright_{\alpha_1} k) \oplus \cdots \oplus U^\cG(\cG \circlearrowright_{\alpha_n} k)\,.
\end{equation}
This implies, in particular, that all the invariants $E^\cG(\cG \circlearrowright \perf_\dg(X))$ can be computed in terms of twisted group algebras $\oplus^n_{i=1} E(k_{\alpha_i}[\cG])$. Taking into account the decompositions \eqref{eq:decomp-motivic} and \eqref{eq:motivic-decomp-NC}, the $\cG$-equivariant Chow motive $\mathfrak{h}^\cG(X)_\bbQ$ and the $\cG$-equivariant noncommutative Chow motive $U^\cG(\cG \circlearrowright \perf_\dg(X))$ should be considered as complementary. While the former keeps track of the Tate twists but not of the $\cG$-action, the latter keeps track of the $\cG$-action but not of the Tate twists.
\begin{remark}
At \S\ref{sub:permutation} we discuss also the case of full exceptional collections where the objects are not $\cG$-invariant but rather permuted by the $\cG$-action.
\end{remark}

\subsection*{Notations}
Throughout the article, $k$ will denote a base field and $\cG$ a finite group. We will write $1\in \cG$ for the unit element and $|\cG|$ for the order of $\cG$. Except at \S\ref{sec:dg}-\ref{sec:actions}, we will always assume that $\mathrm{char}(k)\nmid |\cG|$. All schemes will be defined over $\mathrm{Spec}(k)$, and all adjunctions will be displayed vertically with the left (resp. right) adjoint on the left (resp. right) hand side.
\section{Background on dg categories}\label{sec:dg}
Let $(\cC(k),\otimes, k)$ be the symmetric monoidal category of dg $k$-vector spaces; we use cohomological notation. A {\em dg category $\cA$} is a category enriched over $\cC(k)$ and a {\em dg functor} $F:\cA\to \cB$ is  a functor enriched over $\cC(k)$; consult Keller's ICM survey \cite{ICM-Keller}. Let us write $\dgcat(k)$ for the category of small dg categories and dg functors. 

Let $\cA$ be a dg category. The opposite dg category $\cA^\op$ has the same objects as $\cA$ and dg $k$-vector spaces $\cA^\op(x,y):=\cA(y,x)$. The category $\mathrm{Z}^0(\cA)$ has the same objects as $\cA$ and morphisms $\mathrm{Z}^0(\cA)(x,y):=Z^0(\cA(x,y))$, where $Z^0(-)$ denotes the $0^{\mathrm{th}}$-cycles functor. The category $\dgHo(\cA)$ has the same objects as $\cA$ and morphisms $\dgHo(\cA)(x,y):=H^0(\cA(x,y))$, where $H^0(-)$ denotes the $0^{\mathrm{th}}$-cohomology functor.
\subsection{Dg equivalences}
Let $\cA$ and $\cB$ be two dg categories. Recall from \cite[\S2.3]{ICM-Keller} the definition of the dg category of dg functors $\Fun_\dg(\cA,\cB)$. Given dg functors $F,G\colon \cA \to \cB$, a natural transformation of dg functors $\epsilon\colon F \Rightarrow G$ corresponds to an element of $\mathrm{Z}^0(\Fun_\dg(\cA,\cB))(F,G)$. When $\epsilon$ is invertible, we call it a {\em natural isomorphism of dg functors}. A dg functor $F\colon \cA \to \cB$ is called a {\em dg equivalence} if there exists a dg functor $G\colon \cB \to \cA$ and natural isomorphisms of dg functors $F\circ G \Rightarrow \id$ and $\id \Rightarrow G \circ F$. Equivalently, the dg functor $F$ is fully faithful and the induced functor $\mathrm{Z}^0(F)$ is essentially surjective.
\subsection{Dg modules}
Let $\cA$ be a small dg category. A (right) {\em dg $\cA$-module} is a dg functor $M\colon \cA^\op \to \cC_\dg(k)$ with values in the dg category of dg $k$-vector spaces. Let us write $\cC(\cA)$ for the category of dg $\cA$-modules and $\cC_\dg(\cA):=\Fun_\dg(\cA^\op,\cC_\dg(k))$ its dg enhancement. The latter dg category comes equipped with the Yoneda dg functor $\cA \to \cC_\dg(\cA), x \mapsto \cA(-,x)$. Following \cite[\S3.2]{ICM-Keller}, the {\em derived category $\cD(\cA)$ of $\cA$} is defined as the localization of $\cC(\cA)$ with respect to the (objectwise) quasi-isomorphisms. This category is triangulated and admits arbitrary direct sums. Let us write $\cD_c(\cA)$ for the full subcategory of compact objects. In the same vein, let $\cC_{c, \dg}(\cA)$ be the full dg subcategory of $\cC_\dg(\cA)$ consisting of those dg $\cA$-modules which belong to $\cD_c(\cA)$. By construction, we have $\dgHo(\cC_{c, \dg}(\cA))\simeq \cD_c(\cA)$. 
\subsection{Morita equivalences}\label{sub:Morita}
A dg functor $F:\cA\to \cB$ is called a {\em Morita equivalence} if the restriction functor $\cD(\cB) \to \cD(\cA)$ is an equivalence of (triangulated) categories. An example is the Yoneda dg functor $\cA \to \cC_{c, \dg}(\cA)$. As proved in \cite[Thm.~5.3]{Additive}, the category $\dgcat(k)$ admits a Quillen model structure whose weak equivalences are the Morita equivalences. Let $\Hmo(k)$ be the homotopy category.
\subsection{Product and coproduct}
The product $\cA\times\cB$, resp. coproduct $\cA\amalg\cB$, of two small dg categories $\cA$ and $\cB$ is defined as follows: the set of objects is the cartesian product, resp. disjoint union, of the sets of objects and the dg $k$-vector spaces $(\cA\times \cB)((x,w),(y,z))$, resp. $(\cA\amalg \cB)(x,y)$, are given by $\cA(x,y) \times \cB(w,z)$, resp. by $\cA(x,y)$ if $x, y \in \cA$, by $\cB(x,y)$ if $x, y \in \cB$, and by $0$ otherwise.
\subsection{Tensor product}
The tensor product $\cA\otimes\cB$ of two small dg categories $\cA$ and $\cB$ is defined as follows: the set of objects is the cartesian product of the sets of objects and the dg $k$-vectors spaces $(\cA\otimes\cB)((x,w),(y,z))$ are given by $\cA(x,y) \otimes \cB(w,z)$. As explained in \cite[\S2.3]{ICM-Keller}, this construction gives rise to a symmetric monoidal structure on $\dgcat(k)$, which descends to the homotopy category $\Hmo(k)$. 

\subsection{Dg bimodules}
A {\em dg $\cA\text{-}\cB$-bimodule} is a dg functor $\mathrm{B}\colon \cA\otimes \cB^\op \to \cC_\dg(k)$ or equivalently a dg $(\cA^\op \otimes \cB)$-module. An example is the dg $\cA\text{-}\cB$-bimodule
\begin{eqnarray}\label{eq:bimodule2}
{}_F\mathrm{B}:\cA\otimes \cB^\op \too \cC_\dg(k) && (x,z) \mapsto \cB(z,F(x))
\end{eqnarray}
associated to a dg functor $F\colon\cA\to \cB$. Let us write $\rep(\cA,\cB)$ for the full triangulated subcategory $\cD(\cA^\op \otimes \cB)$ consisting of those dg $\cA\text{-}\cB$-bimodules $\mathrm{B}$ such that for every $x \in \cA$ the dg $\cB$-module $\mathrm{B}(x,-)$ belongs to $\cD_c(\cB)$. In the same vein, let $\rep_\dg(\cA,\cB)$ be the full dg subcategory of $\cC_\dg(\cA^\op \otimes \cB)$ consisting of those dg $\cA\text{-}\cB$-bimodules which belong to $\rep(\cA,\cB)$. By construction, $\dgHo(\rep_\dg(\cA,\cB))\simeq \rep(\cA,\cB)$.

\subsection{Smooth proper dg categories}\label{sub:smooth}
Following Kontsevich \cite{IAS,ENS,Miami,finMot}, a dg category $\cA$ is called {\em smooth} if the dg $\cA\text{-}\cA$-bimodule ${}_{\id}\mathrm{B}$ belongs to the triangulated category $\cD_c(\cA^\op\otimes \cA)$ and {\em proper} if $\sum_i \mathrm{dim}\, H^i\cA(x,y)< \infty$ for any ordered pair of objects $(x,y)$. Examples include the finite dimensional $k$-algebras of finite global dimension (when $k$ is perfect) as well as the dg categories $\perf_\dg(X)$ associated to smooth proper schemes $X$. Given smooth proper dg categories $\cA$ and $\cB$, the associated dg categories $\cA\times \cB$, $\cA\amalg \cB$, and $\cA\otimes \cB$, are also smooth proper. Let us write $\dgcat_{\mathrm{sp}}(k)$ for the full subcategory of $\dgcat(k)$ consisting of the smooth proper dg categories.

\section{Equivariant perfect complexes}\label{sub:perfect}
Let $\cE$ be an abelian (or exact) category. Following Keller \cite[\S4.4]{ICM-Keller}, the {\em derived dg category $\cD_\dg(\cE)$ of $\cE$} is defined as the dg quotient $\cC_\dg(\cE)/\mathcal{A}c_\dg(\cE)$ of the dg category of complexes over $\cE$ by its full dg subcategory of acyclic complexes.
\subsection{Perfect complexes}\label{sub:perfect1}
Let $X$ be a quasi-compact quasi-separated scheme. We write $\mathrm{Mod}(X)$ for the Grothendieck category of $\cO_X$-modules, $\cD(X)$ for the derived category $\cD(\mathrm{Mod}(X))$, and $\cD_\dg(X)$ for the dg category $\cD_\dg(\cE)$ with $\cE:=\mathrm{Mod}(X)$. Recall that a complex of $\cO_X$-modules $\cF \in \cD(X)$ is called {\em perfect} if there exists a covering $X=\bigcup_i V_i$ of $X$ by affine open subschemes $V_i \hookrightarrow X$ such that for every $i $ the restriction $\cF_{|V_i}$ of $\cF$ to $V_i$ is quasi-isomorphic to a bounded complex of finitely generated projective $\cO_{|V_i}$-modules. Let us write $\perf(X)$, resp. $\perf_\dg(X)$, for the full triangulated subcategory, resp. full dg subcategory, of perfect complexes.
\subsection{Equivariant perfect complexes}
Let $X$ be a quasi-compact quasi-separated $\cG$-scheme. A {\em $\cG$-equivariant $\cO_X$-module $\cF$} is a $\cO_X$-module equipped with a family of isomorphisms $\theta_\sigma\colon \cF \to \sigma^\ast(\cF), \sigma \in \cG$, with $\theta_1=\id$, such that the compositions 
$$ \cF \stackrel{\theta_\rho}{\too} \rho^\ast(\cF) \stackrel{\rho^\ast(\theta_\sigma)}{\too} \rho^\ast(\sigma^\ast(\cF))$$
are equal to $\theta_{\rho\sigma}\colon \cF \to (\rho\sigma)^\ast(\cF)$ for every $\sigma, \rho \in \cG$. A {\em morphism of $\cG$-equivariant $\cO_X$-modules $(\cF, \theta_\sigma) \to (\mathcal{G},\theta_\sigma)$} is a morphism of $\cO_X$-modules $f\colon \cF \to \mathcal{G}$ such that $\theta_\sigma \circ f = \sigma^\ast(f) \circ \theta_\sigma$
for every $\sigma \in \cG$. We write $\mathrm{Mod}^\cG(X)$ for the Grothendieck category of $\cG$-equivariant $\cO_X$-modules, $\cD^\cG(X)$ for the derived category $\cD(\mathrm{Mod}^\cG(X))$, and $\cD^\cG_\dg(X)$ for the dg category $\cD_\dg(\cE)$ with $\cE:=\mathrm{Mod}^\cG(X)$. A complex of $\cG$-equivariant $\cO_X$-modules $\cF \in \cD^\cG(X)$ is called a {\em $\cG$-equivariant perfect complex} if the underlying complex of $\cO_X$-modules is perfect. Let us write $\perf^\cG(X)$, resp. $\perf^\cG_\dg(X)$, for the full triangulated subcategory, resp. full dg subcategory, of $\cG$-equivariant perfect complexes.
\subsection{Twisted equivariant perfect complexes}
\begin{definition}[$2$-cocycle]
A map $\alpha \colon \cG \times \cG \to k^\times$ is called a {\em $2$-cocycle} if $\alpha(1,\sigma)=\alpha(\sigma, 1)=1$ and $\alpha(\rho, \alpha)\alpha(\tau,\rho\sigma)=\alpha(\tau, \rho)\alpha(\tau \rho, \sigma)$ for every $\sigma, \rho, \tau \in \cG$.
\end{definition}
Let $X$ be a quasi-compact quasi-separated $\cG$-scheme and $\alpha$ a {\em $2$-cocycle}. An {\em $\alpha$-twisted $\cG$-equivariant $\cO_X$-module $\cF$} is a $\cO_X$-module equipped with a family of isomorphisms $\theta_\sigma\colon \cF \to \sigma^\ast(\cF), \sigma \in \cG$, with $\theta_1=\id$, such that the compositions 
$$\cF \stackrel{\theta_\rho}{\too} \rho^\ast(\cF) \stackrel{\rho^\ast(\theta_\sigma)}{\too} \rho^\ast(\sigma^\ast(\cF))$$
are equal to $\alpha(\rho, \sigma) \theta_{\rho\sigma}\colon \cF \to (\rho\sigma)^\ast(\cF)$ for every $\sigma, \rho \in \cG$. A {\em morphism of $\alpha$-twisted $\cG$-equivariant $\cO_X$-modules} $(\cF, \theta_\sigma) \to (\mathcal{G}, \theta_\sigma)$ is a morphism of $\cO_X$-modules $f\colon \cF \to \mathcal{G}$ such that $\theta_\sigma \circ f = \sigma^\ast(f) \circ \theta_\sigma$
for every $\sigma \in \cG$. We write $\mathrm{Mod}^{\cG,\alpha}(X)$ for the Grothendieck category of $\alpha$-twisted $\cG$-equivariant $\cO_X$-modules, $\cD^{\cG,\alpha}(X)$ for the derived category $\cD(\mathrm{Mod}^{\cG,\alpha}(X))$, and $\cD_\dg^{\cG, \alpha}(X)$ for the dg category $\cD_\dg(\cE)$ with $\cE:=\mathrm{Mod}^{\cG,\alpha}(X)$. A complex of $\alpha$-twisted $\cG$-equivariant $\cO_X$-modules $\cF \in \cD^{\cG,\alpha}(X)$ is called a {\em $\alpha$-twisted $\cG$-equivariant perfect complex} if the underlying complex of $\cO_X$-modules is perfect. Let us write $\perf^{\cG,\alpha}(X)$, resp. $\perf_\dg^{\cG,\alpha}(X)$, for the full triangulated subcategory, resp. full dg subcategory, of $\alpha$-twisted $\cG$-equivariant perfect complexes. 
\section{Group actions on dg categories}\label{sec:actions}
In this section we develop a general theory of group actions on dg categories. Following Deligne \cite{Deligne} and Elagin \cite{Elagin}, we start by introducing the following notion\footnote{P. Seidel introduced in \cite{Seidel, Seidel1} the notion of a circle action on a $A_\infty$-category.}: 
\begin{definition}
A (left) {\em $\cG$-action on a dg category $\cA$} consists of the data:
\begin{itemize}
\item[(i)] a family of dg equivalences $\phi_\sigma\colon \cA \to \cA, \sigma \in \cG$, with $\phi_1=\id$;
\item[(ii)] a family of natural isomorphisms of dg functors $\epsilon_{\rho,\sigma}\colon \phi_\rho \circ \phi_\sigma \Rightarrow \phi_{\rho\sigma}, \sigma, \rho \in \cG$, with $\epsilon_{1,\sigma}=\epsilon_{\sigma,1}=\id$, such that the equality $\epsilon_{\tau \rho, \sigma} \circ (\epsilon_{\tau, \rho}\circ \phi_\sigma)= \epsilon_{\tau, \rho\sigma} \circ (\phi_\tau \circ \epsilon_{\rho, \sigma})$
holds for every $\sigma, \rho, \tau \in \cG$.
\end{itemize}
\end{definition}
Throughout the article, a dg category $\cA$ equipped with a $\cG$-action will be denoted by $\cG \circlearrowright \cA$ and will be called a {\em $\cG$-dg category}.
\begin{example}[$\cG$-schemes]\label{ex:G-schemes}
Given a quasi-compact quasi-separated $\cG$-scheme $X$, the dg category $\perf_\dg(X)$ inherits a $\cG$-action induced by the pull-back dg equivalences $\phi_\sigma:=\sigma^\ast$; consult Elagin \cite{Elagin} and Sosna \cite{Sosna} for details.
\end{example}
\begin{example}[Line bundles]\label{ex:lines}
Let $X$ be a quasi-compact quasi-separated scheme. In the case where $\cG$ can be realized as a subgroup of the Picard group $\mathrm{Pic}(X)$, the dg category $\perf_\dg(X)$ inherits a $\cG$-action induced by the dg equivalences $\phi_\sigma:=-\otimes_{\cO_X}\cL_\sigma$, where $\cL_\sigma$ stands for the invertible line bundle associated to $\sigma \in \cG$; consult Elagin \cite{Elagin} and Sosna \cite{Sosna} for details.\end{example}
\begin{example}[$\cG$-algebras]\label{ex:G-algebras}
Given a $\cG$-action on a (dg) algebra $A$, the associated dg category with a single object inherits a $\cG$-action with $\epsilon_{\rho, \sigma}:=\id$.
\end{example}
\begin{example}[$2$-cocycles]\label{ex:2-cocycles}
Given a $2$-cocycle $\alpha\colon \cG \times \cG \to k^\times$, the dg category $k$ inherits a $\cG$-action given by $\phi_\sigma:=\id$ and $\epsilon_{\rho, \sigma}:=\alpha(\rho,\sigma)$. In what follows, we will denote this $\cG$-dg category by $\cG \circlearrowright_\alpha k$. Note that these are all the possible $\cG$-actions on the dg category $k$.
\end{example}
\begin{remark}[Opposite dg category]\label{rk:opposite}
Let $\cG \circlearrowright \cA$ be a $\cG$-dg category. The opposite dg category $\cA^\op$ inherits a $\cG$-action given by the dg equivalences $\phi_\sigma$ and by the natural isomorphisms of dg functors $\epsilon^{-1}_{\rho, \sigma}$.
\end{remark}
\begin{remark}[Tensor product]\label{rk:tensorproduct}
Let $\cG \circlearrowright \cA$ and $\cG \circlearrowright\cB$ be two $\cG$-dg categories. The tensor product $\cA \otimes \cB$ inherits a $\cG$-action given by the dg equivalences $\phi_\sigma \otimes \phi_\sigma$ and by the natural isomorphisms of dg equivalences $\epsilon_{\rho, \sigma} \otimes \epsilon_{\rho, \sigma}$. Similarly, the product $\cA\times \cB$ inherits a $\cG$-action given by the dg equivalences $\phi_\sigma \times \phi_\sigma$ and by the natural isomorphisms of dg functors $\epsilon_{\rho, \sigma} \times \epsilon_{\rho, \sigma}$.
\end{remark}
\begin{remark}[Dg category of dg functors]\label{rk:dgcatdgfun}
Let $\cG \circlearrowright \cA$ and $\cG \circlearrowright\cB$ be two $\cG$-dg categories. The dg category of dg functors $\Fun_\dg(\cA,\cB)$ inherits a $\cG$-action given by the dg equivalences $F \mapsto \phi_\sigma \circ F\circ \phi_{\sigma^{-1}}$ and by the natural isomorphisms of dg functors induced from $\epsilon_{\sigma^{-1}, \rho^{-1}}$ and $\epsilon_{\rho,\sigma}$. 
\end{remark}
\begin{remark}[Dg modules]\label{rk:dgmodules}
Let $\cG \circlearrowright \cA$ be a small $\cG$-dg category, and $\cC_\dg(k)$ the dg category dg $k$-vector spaces equipped with the trivial $\cG$-action. Thanks to Remarks \ref{rk:opposite} and \ref{rk:dgcatdgfun}, the dg category of dg $\cA$-modules $\cC_\dg(\cA):=\Fun_\dg(\cA^\op, \cC_\dg(k))$ inherits a $\cG$-action, which restricts to $\cC_{c, \dg}(\cA)$.
\end{remark}
\begin{remark}[Dg bimodules]\label{rk:dgbimodules}
Let $\cG \circlearrowright \cA$ and $\cG \circlearrowright \cB$ be two small $\cG$-dg categories, and $\cC_\dg(k)$ the dg category of dg $k$-vector spaces equipped with the trivial $\cG$-action. Thanks to Remarks \ref{rk:opposite}-\ref{rk:dgcatdgfun}, the dg category of dg $\cA\text{-}\cB$-bimodules $\cC_\dg(\cA^\op \otimes \cB):=\Fun_\dg(\cA\otimes \cB^\op, \cC_\dg(k))$ inherits a $\cG$-action, which restricts to $\rep_\dg(\cA,\cB)$.
\end{remark}
\begin{definition}
A {\em $\cG$-equivariant dg functor $\cG \circlearrowright \cA \to \cG \circlearrowright \cB$} consists of the data:
\begin{itemize}
\item[(i)] a dg functor $F\colon \cA \to \cB$;
\item[(ii)] a family of natural isomorphisms of dg functors $\eta_\sigma\colon F \circ \phi_\sigma \Rightarrow \phi_\sigma \circ F, \sigma \in \cG$, such that $\eta_{\rho\sigma} \circ (F \circ \epsilon_{\rho,\sigma})= (\epsilon_{\rho, \sigma}\circ F) \circ (\phi_\rho \circ \eta_\sigma)\circ (\eta_\rho \circ \phi_\sigma)$ for every $\sigma, \rho \in \cG$.
\end{itemize}
\end{definition}
A $\cG$-equivariant dg functor with $F$ a Morita equivalence is called a {\em $\cG$-equivariant Morita equivalence}. For example, given a small $\cG$-dg category $\cG \circlearrowright \cA$, the Yoneda dg functor $\cA \to \cC_{c, \dg}(\cA), x \mapsto \cA(-,x)$, is a $\cG$-equivariant Morita equivalence.

Let us denote by $\dgcat^\cG(k)$ the category whose objects are the small $\cG$-dg categories and whose morphisms are the $\cG$-equivariant dg functors. Given $\cG$-equivariant dg functors $F\colon \cG \circlearrowright \cA \to \cG \circlearrowright \cB$ and $G\colon \cG \circlearrowright \cB \to \cG \circlearrowright \cC$, their composition is defined as $(G\circ F, (\eta_\sigma \circ F)\circ (G \circ \eta_\sigma))$. The category $\dgcat^\cG(k)$ carries a symmetric monoidal structure given by $(\cG \circlearrowright \cA) \otimes (\cG \circlearrowright \cB):= \cG \circlearrowright (\cA\otimes \cB)$. This monoidal structure is closed, with internal-Homs given by $\cG \circlearrowright \Fun_\dg(\cA,\cB)$. 

By construction, we have the restriction functor
\begin{eqnarray}\label{eq:forgetful}
\dgcat^\cG(k) \too \dgcat(k) && \cG \circlearrowright \cA \mapsto \cA
\end{eqnarray}
as well as the trivial $\cG$-action functor
\begin{eqnarray}\label{eq:trivial}
\dgcat(k) \too \dgcat^\cG(k) && \cA \mapsto \cG \circlearrowright_1 \cA\,,
\end{eqnarray}
where $\cG \circlearrowright_1 \cA$ is equipped with the $\cG$-action given by $\phi_\sigma:=\id$ and $\epsilon_{\rho,\sigma}:=\id$. Note that \eqref{eq:forgetful}-\eqref{eq:trivial} are symmetric monoidal and that \eqref{eq:trivial} is faithful but~not~full. 

\begin{proposition}\label{prop:2-cocycles}
Let $\alpha$ and $\beta$ be two $2$-cocycles. The $\cG$-dg categories $\cG \circlearrowright_\alpha k$ and $\cG \circlearrowright_\beta k$ are isomorphic in $\dgcat^\cG(k)$ if and only if the cohomology classes $[\alpha]$ and $[\beta]$ are the same in $H^2(\cG,k^\times)$.
\end{proposition}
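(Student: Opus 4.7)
The plan is to translate the abstract notion of a $\cG$-equivariant dg functor $(\cG \circlearrowright_\alpha k) \to (\cG \circlearrowright_\beta k)$ into elementary data in $k^\times$, and then recognize the resulting coherence constraint as precisely the bar-complex identity saying that $\alpha\beta^{-1}$ is a coboundary.

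The first step is to note that, since the dg category $k$ has a single object with endomorphism dg algebra $k$ concentrated in degree zero, every dg functor $F\colon k\to k$ is forced to be the identity (a unital morphism of dg $k$-algebras $k\to k$ leaves no freedom). Similarly, because $\phi_\sigma=\id_k$ on both sides (see Example \ref{ex:2-cocycles}), each natural isomorphism of dg functors $\eta_\sigma\colon F\circ\phi_\sigma \Rightarrow \phi_\sigma\circ F$ boils down to a single invertible scalar $\eta_\sigma\in Z^0(k)^\times = k^\times$. Thus giving a $\cG$-equivariant dg functor is exactly giving a family $\{\eta_\sigma\}_{\sigma\in\cG}\subset k^\times$ subject to the coherence axiom.

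The second step is to unpack that axiom. Under the identifications above all whiskerings collapse and both horizontal and vertical composition of natural transformations become plain multiplication in $k^\times$. Applying this to
$\eta_{\rho\sigma}\circ(F\circ\epsilon^\alpha_{\rho,\sigma}) = (\epsilon^\beta_{\rho,\sigma}\circ F)\circ(\phi_\rho\circ\eta_\sigma)\circ(\eta_\rho\circ\phi_\sigma)$
yields the scalar identity $\alpha(\rho,\sigma)\,\eta_{\rho\sigma} = \beta(\rho,\sigma)\,\eta_\rho\eta_\sigma$ in $k^\times$, equivalently $\alpha\beta^{-1} = d\eta$, where $d\eta(\rho,\sigma):=\eta_\rho\eta_\sigma\eta_{\rho\sigma}^{-1}$ is the standard coboundary. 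Specializing to $\rho=\sigma=1$ (and using $\epsilon_{1,1}=\id$) forces $\eta_1=1$, so $\eta$ is a normalized $1$-cochain in the bar complex of $\cG$ with coefficients in $k^\times$.

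Both directions of the proposition then follow. If the two $\cG$-dg categories are isomorphic via $(\id,\{\eta_\sigma\})$, the computation above shows that $\alpha/\beta=d\eta$ is a coboundary, so $[\alpha]=[\beta]\in H^2(\cG,k^\times)$. Conversely, any normalized $1$-cochain $\eta$ with $d\eta=\alpha\beta^{-1}$ defines a $\cG$-equivariant dg functor $(\id,\{\eta_\sigma\})$; its two-sided inverse is $(\id,\{\eta_\sigma^{-1}\})$, as one checks from the composition rule for $\dgcat^\cG(k)$ recalled just before the proposition. I do not anticipate a genuine obstacle here; the only mildly delicate point is keeping the $2$-categorical bookkeeping (whiskering versus horizontal/vertical composition) straight when translating the coherence axiom into the multiplicative identity above, after which the matching with the cocycle/coboundary description of $H^2(\cG,k^\times)$ is immediate.
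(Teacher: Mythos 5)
Your proof is correct and takes essentially the same approach as the paper's: in both, an isomorphism of $\cG$-dg categories is unwound to the identity dg functor together with a family of scalars $\eta_\sigma \in k^\times$, and the $\cG$-equivariance coherence axiom is recognized as exactly the coboundary condition $\alpha(\rho,\sigma)\,\eta_{\rho\sigma} = \beta(\rho,\sigma)\,\eta_\rho\eta_\sigma$. You merely spell out in slightly more detail the $2$-categorical bookkeeping that the paper leaves implicit.
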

\begin{proof}
Recall that a map $\delta\colon \cG \to k^\times$ is called a {\em coboundary between $\alpha$ and $\beta$} if $\delta(\rho \sigma) \alpha(\rho, \sigma)=\delta(\sigma) \delta(\rho) \beta(\rho,\sigma)$ for every $\sigma, \rho \in \cG$. If such a coboundary exists, then we can consider the $\cG$-equivariant dg functor $\cG \circlearrowright_\alpha k \to \cG \circlearrowright_\beta k$ defined by $F:=\id$ and $\eta_\sigma:=\delta(\sigma)$. This $\cG$-equivariant dg functor is an isomorphism in $\dgcat^\cG(k)$, with inverse given by $G := \id$ and $\eta_\sigma:= \delta(\sigma)^{-1}$. Therefore, we conclude that if $[\alpha]=[\beta]$ in $H^2(\cG,k^\times)$, then the $\cG$-dg categories $\cG \circlearrowright_\alpha k$ and $\cG \circlearrowright_\beta k$ are isomorphic in $\dgcat^\cG(k)$. Conversely, suppose that $\cG \circlearrowright_\alpha k$ and $\cG \circlearrowright_\beta k$ are isomorphic in $\dgcat^\cG(k)$. An isomorphism is necessarily given by the identity dg functor $F:=\id$ and by a map $\delta\colon \cG \to k^\times$ (corresponding to the natural isomorphisms of dg functors $\eta_\sigma$) such that $\delta(\rho\sigma)\alpha(\rho, \sigma)=\delta(\sigma) \delta(\rho) \beta(\rho,\sigma)$ for every $\sigma, \rho \in \cG$, \ie by a coboundary between $\alpha$ and $\beta$. This concludes the proof.
\end{proof}
\begin{example}
When $k=\bbC$, we have the computations
$$ H^2(C_n,\bbC^\times)\simeq 0 \quad H^2(S_n,\bbC^\times) \simeq \begin{cases} 0 & n \leq 3 \\ C_2 & n \geq 4 \end{cases} \quad H^2(A_n,\bbC^\times)\simeq  \begin{cases} 0 & n \leq 3 \\ C_2 & n \geq 4\neq 6,7\\ C_6 & n=6,7 \end{cases}$$
$$ H^2(D_{2n}, \bbC^\times) \simeq  \begin{cases} 0 & n \,\, \mathrm{odd} \\ C_2 & n \,\,\mathrm{even} \end{cases} \quad \quad H^2(E_{p^n}, \bbC^\times)\simeq E_{p^{\frac{n(n-1)}{2}}}\,,$$
where $C_n$ stands for the cyclic group of order $n$, $S_n$ for the symmetric group on $n$ letters, $A_n$ for the alternating group on $n$ letters, $D_{2n}$ for the dihedral group associated to a polygon with $n$ sides, and $E_{p^n}$ for the elementary abelian group of order $p^n$. In general, $H^2(\cG,k^\times)$ is finite and a $\bbZ/|\cG|$-module.
\end{example}

%
Let us denote by $\mathrm{Pic}(\dgcat^\cG(k))$ the Picard group of the category $\dgcat^\cG(k)$.
\begin{proposition}\label{prop:injective}
We have an injective group homomorphism
\begin{eqnarray*}
H^2(\cG,k^\times) \too \mathrm{Pic}(\dgcat^\cG(k)) && [\alpha] \mapsto \cG \circlearrowright_\alpha k\,.
\end{eqnarray*}
\end{proposition}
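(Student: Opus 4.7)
The plan is to verify three things: that the assignment lands in the Picard group (invertibility under $\otimes$), that it is a group homomorphism, and that it is injective. The injectivity is essentially immediate from the previous proposition, so the substance of the argument lies in computing the tensor product of $\cG$-dg categories of the form $\cG \circlearrowright_\alpha k$.

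First, I would compute $(\cG \circlearrowright_\alpha k) \otimes (\cG \circlearrowright_\beta k)$ explicitly using Remark \ref{rk:tensorproduct}. The underlying dg category is $k \otimes k \simeq k$. The $\cG$-action is given by $\phi_\sigma \otimes \phi_\sigma = \id \otimes \id = \id$ and by $\epsilon_{\rho,\sigma} \otimes \epsilon_{\rho,\sigma} = \alpha(\rho,\sigma) \cdot \beta(\rho,\sigma) = (\alpha\beta)(\rho,\sigma)$. Hence we obtain a canonical identification
\begin{equation*}
(\cG \circlearrowright_\alpha k) \otimes (\cG \circlearrowright_\beta k) \;\simeq\; \cG \circlearrowright_{\alpha\beta} k
\end{equation*}
in $\dgcat^\cG(k)$. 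A trivial but necessary check is that the pointwise product $\alpha\beta$ (and also the pointwise inverse $\alpha^{-1}$) of 2-cocycles is again a 2-cocycle with trivial normalization; this follows directly from the defining identities. In particular, taking $\beta = \alpha^{-1}$ yields $\cG \circlearrowright_{\alpha \alpha^{-1}} k = \cG \circlearrowright_1 k$, which is the unit of the symmetric monoidal category $\dgcat^\cG(k)$. This simultaneously shows that each $\cG \circlearrowright_\alpha k$ is invertible (so lies in $\mathrm{Pic}(\dgcat^\cG(k))$) and that the assignment respects multiplication.

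Well-definedness on cohomology classes and injectivity are both consequences of Proposition \ref{prop:2-cocycles}: if $[\alpha] = [\beta]$ then $\cG \circlearrowright_\alpha k$ and $\cG \circlearrowright_\beta k$ are isomorphic in $\dgcat^\cG(k)$, hence represent the same class in $\mathrm{Pic}(\dgcat^\cG(k))$; conversely, if $\cG \circlearrowright_\alpha k$ becomes the unit $\cG \circlearrowright_1 k$ in the Picard group, then the two $\cG$-dg categories are isomorphic in $\dgcat^\cG(k)$, so the converse direction of Proposition \ref{prop:2-cocycles} gives $[\alpha] = [1]$ in $H^2(\cG, k^\times)$. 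Thus the kernel is trivial and the homomorphism is injective.

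The only step with any subtlety is the identification of the tensor product action, since one needs to check that the natural isomorphisms $\epsilon_{\rho,\sigma} \otimes \epsilon_{\rho,\sigma}$ produced by Remark \ref{rk:tensorproduct} really do multiply scalars pointwise under the identification $k \otimes_k k \simeq k$; but this is just the statement that multiplication in $k^\times$ is the tensor product of scalar automorphisms of $k$, which is immediate. Everything else is bookkeeping, and no obstacle remains.
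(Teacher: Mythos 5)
Your proof is correct and follows essentially the same route as the paper: compute $(\cG \circlearrowright_\alpha k) \otimes (\cG \circlearrowright_\beta k) \simeq \cG \circlearrowright_{\alpha\beta} k$ via Remark~\ref{rk:tensorproduct}, deduce membership in the Picard group with inverse $\cG \circlearrowright_{\alpha^{-1}} k$, and then obtain well-definedness and injectivity from Proposition~\ref{prop:2-cocycles}. The only difference is that you spell out the bookkeeping (the tensor computation, normalization of $\alpha\beta$, the precise use of both directions of Proposition~\ref{prop:2-cocycles}) that the paper leaves implicit.
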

\begin{proof}
Given any two $2$-cocycles $\alpha$ and $\beta$, the $\cG$-dg category $(\cG \circlearrowright_\alpha k) \otimes (\cG \circlearrowright_\beta k)$ is isomorphic to $\cG \circlearrowright_{\alpha \beta} k$. This implies that $\cG \circlearrowright_\alpha k$ is an element of the Picard group, with $\otimes$-inverse $\cG \circlearrowright_{\alpha^{-1}}k$. The proof follows now from Proposition~\ref{prop:2-cocycles}.
\end{proof}
%
%
%
\subsection{Equivariant objects}
Let $\cG \circlearrowright \cA$ be a $\cG$-dg category.
\begin{definition}\label{def:G-equivariant}
\begin{itemize}
\item[(i)] A {\em $\cG$-equivariant object in $\cG \circlearrowright \cA$} consists of an object $x \in \cA$ and of a family of closed degree zero isomorphisms $\theta_\sigma\colon x \to \phi_\sigma(x), \sigma \in \cG$, with $\theta_1=\id$, such that the compositions 
$$
x \stackrel{\theta_\rho}{\too} \phi_\rho(x) \stackrel{\phi_\rho(\theta_\sigma)}{\too} \phi_\rho(\phi_\sigma (x)) \stackrel{\epsilon_{\rho,\sigma}(x)}{\too} \phi_{\rho\sigma}(x)
$$
are equal to $\theta_{\rho\sigma}\colon x \to \phi_{\rho\sigma}(x)$ for every $\sigma, \rho \in \cG$.
\item[(ii)] A {\em morphism of $\cG$-equivariant objects $(x,\theta_\sigma) \to (y,\theta_\sigma)$} is an element $f$ of the dg $k$-vector space $\cA(x,y)$ such that $\theta_\sigma \circ f = \phi_\sigma(f) \circ \theta_\sigma$ for every $\sigma \in \cG$.
\end{itemize}
Let us write $\cA^\cG$ for the dg category of $\cG$-equivariant objects in $\cG \circlearrowright \cA$.
\end{definition}
From a topological viewpoint, the dg category $\cA^\cG$ may be understood as the ``homotopic fixed points'' of the $\cG$-action on $\cA$.
\begin{example}[Equivariant perfect complexes]\label{ex:G-equivariant}
Let $\cG \circlearrowright \perf_\dg(X)$ be as in Example~\ref{ex:G-schemes}. When $\mathrm{char}(k)\nmid |G|$, Elagin proved in \cite[Thm.~1.1]{Elagin}\cite[Thm.~9.6]{Elagin-cohomological} that $\perf_\dg(X)^\cG$ is Morita equivalent to the dg category of $\cG$-equivariant perfect complexes $\perf_\dg^\cG(X)$. In some cases, the latter dg category admits a geometric description in terms of a resolution of the singular quotient $X/\cG$:
\begin{itemize}
\item[(i)] Let $X$ be a smooth $\cG$-scheme of dimension $\leq 3$ such that $\cG_x \subset \mathrm{SL}(T_X(x))$ for all closed points $x \in X$. In these cases, Bridgeland, King, and Reid, constructed in  \cite{BRK} a crepant resolution $Y \to X/\cG$ (using a component of the Hilbert scheme of $\cG$-clusters) and proved that $\perf^\cG_\dg(X)$ is Morita equivalent to $\perf_\dg(Y)$. For example, when the cyclic group $\cG=C_2$ acts by the involution $a\mapsto -a$ on an abelian surface $S$, the crepant=minimal resolution of the quotient $\cS/C_2$ is given by the Kummer surface $\mathrm{Km}(S)$.
\item[(ii)] Let $V$ be a symplectic vector space and $\cG \subset \mathrm{Sp}(V)$ a finite subgroup. Assuming the existence of a crepant resolution $Y \to V/\cG$, Bezrukavnikov and Kaledin proved in \cite{BKaledin} that $\perf_\dg^\cG(V)$ is Morita equivalent to $\perf_\dg(Y)$. 
\end{itemize}
A well known conjecture of Reid asserts that whenever the quotient $X/\cG$ admits a crepant resolution $Y$, the dg categories $\perf^\cG_\dg(X)$ and $\perf_\dg(Y)$ are Morita equivalent. Besides the preceding cases (i)-(ii), this conjecture remains wide open.
\end{example}
\begin{example}[Covering spaces]\label{ex:covering}
Let $\cG \circlearrowright \perf_\dg(X)$ be as in Example \ref{ex:lines}. Consider the relative spectrum $Y:=\mathrm{Spec}_X(\oplus_{\sigma \in \cG}\cL_\sigma^{-1})$, which is a non-ramified $|G|$-fold cover of $X$. When $\mathrm{char}(k)\nmid |G|$, Elagin proved in \cite[Thm.~1.2]{Elagin} that $\perf_\dg(X)^\cG$ is Morita equivalent to $\perf_\dg(Y)$. In the particular case where $X$ is an Enriques surface, $\cG=C_2$ is the cyclic group of order $2$, and $\cL$ is the canonical bundle of $X$, the $2$-fold cover $Y$ of $X$ is known to be a $K3$-surface.
\end{example}
\begin{example}[Semidirect product algebras]\label{ex:semidirect}
Let $\cG \circlearrowright A$ be as in Example \ref{ex:G-algebras}. As mentioned in Remark \ref{rk:dgmodules}, the dg category $\cC_{c,\dg}(A)$ inherits a $\cG$-action. Moreover, it admits direct sums and $\dgHo(\cC_{c, \dg}(A))\simeq \cD_c(A)$ is an idempotent complete triangulated category. Furthermore, the dg $A$-module $A$ generates the triangulated category $\cD_c(A)$. Making use of Lemma \ref{lem:generator} below, we conclude that when $\mathrm{char}(k)\nmid |G|$, the dg category $\cC_{c, \dg}(A)^\cG$ is Morita equivalent to the dg algebra of endomorphisms of the $\cG$-equivariant object $(\oplus_{\rho \in \cG}\phi_\rho(A), \theta_\sigma)$. A simple computation, using the fact that $\theta_\sigma=\id$, shows that this (dg) $k$-algebra is isomorphic to the semidirect product (dg) algebra $A \rtimes \cG$.
\end{example}
\begin{example}[Twisted group algebras]\label{ex:twisted}
Let $\cG \circlearrowright_\alpha k$ be as in Example \ref{ex:2-cocycles}. Similarly to Example \ref{ex:semidirect}, the dg category $\cC_{c, \dg}(k)^\cG$ is Morita equivalent to the (dg) algebra of endomorphisms of the $\cG$-equivariant object $(\oplus_{\rho \in \cG} \phi_\rho(k), \theta_\sigma)$. A simple computation, using the fact that $\phi_\rho(k)=k$, shows that this (dg) $k$-algebra is isomorphic to the twisted group algebra $k_\alpha[\cG]$. Roughly speaking, the twisted group algebras are the ``homotopic fixed points'' of the $\cG$-actions on the dg category $k$.
\end{example}
\begin{lemma}\label{lem:generator}
Assume that $\mathrm{char}(k)\nmid |G|$. Let $\cG \circlearrowright \cA$ be a $\cG$-dg category such that $\cA$ admits direct sums and $\dgHo(\cA)$ is an idempotent complete triangulated category. If $x \in \cA$ generates the triangulated category $\dgHo(\cA)$, then the dg category $\cA^\cG$ is Morita equivalent to the dg algebra of endomorphisms of the $\cG$-equivariant object $(\oplus_{\rho \in \cG}\phi_\rho(x), \theta_\sigma) \in \cA^\cG$, where $\theta_\sigma$ is given by the collection of isomorphisms~$\epsilon_{\sigma, \rho}(x)^{-1}$.
%
%
\end{lemma}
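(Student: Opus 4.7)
The strategy is to exhibit $y := (\oplus_{\rho\in\cG}\phi_\rho(x),\theta_\sigma)$ as a compact generator of $\dgHo(\cA^\cG)$ and then invoke the standard dg Morita theorem to identify $\cA^\cG$ with $\REnd_{\cA^\cG}(y)$. A preliminary point is that the prescribed $\theta_\sigma$, which on the $\rho$-th summand is the isomorphism $\epsilon_{\sigma,\rho}(x)^{-1}\colon \phi_{\sigma\rho}(x)\to \phi_\sigma\phi_\rho(x)$, does define a $\cG$-equivariant object in the sense of Definition~\ref{def:G-equivariant}(i); checking the cocycle identity on each summand unwinds directly to the associativity identity $\epsilon_{\tau\rho,\sigma}\circ(\epsilon_{\tau,\rho}\circ \phi_\sigma)=\epsilon_{\tau,\rho\sigma}\circ(\phi_\tau\circ \epsilon_{\rho,\sigma})$ imposed in the definition of a $\cG$-action, evaluated at $x$.

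Next I would set up the restriction--induction adjunction. The forgetful dg functor $\mathrm{Res}\colon \cA^\cG \to \cA$ admits as left adjoint the induction $\mathrm{Ind}\colon a\mapsto (\oplus_\rho \phi_\rho(a),\theta^a_\sigma)$, with $\theta^a_\sigma$ defined exactly as in the preliminary point. A direct construction of the unit $a\to \mathrm{Res}\,\mathrm{Ind}(a)$ (inclusion of the $1$-summand, using $\phi_1=\id$) and of the counit $\mathrm{Ind}\,\mathrm{Res}(z)\to z$ (on the $\rho$-summand: $\phi_\rho(z)\to z$ via $\theta_\rho^{-1}$) yields a natural quasi-isomorphism $\Hom_{\cA^\cG}(\mathrm{Ind}(a),z)\simeq \Hom_\cA(a,\mathrm{Res}(z))$ of mapping complexes. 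By construction $\mathrm{Ind}(x)\cong y$ in $\cA^\cG$.

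The heart of the argument is showing that $y$ generates $\dgHo(\cA^\cG)$. If $z\in \cA^\cG$ satisfies $\Hom_{\dgHo(\cA^\cG)}(y,z[n])=0$ for every $n\in \bbZ$, the adjunction forces $\Hom_{\dgHo(\cA)}(x,\mathrm{Res}(z)[n])=0$ for every $n$, so $\mathrm{Res}(z)\simeq 0$ in $\dgHo(\cA)$ since $x$ generates. The hypothesis $\mathrm{char}(k)\nmid |\cG|$ enters at this point: the endomorphism $\tfrac{1}{|\cG|}\sum_{\sigma\in \cG}\theta_\sigma^{-1}\circ(-)$ of $\mathrm{Ind}\,\mathrm{Res}(z)$ factors through $z$ and provides a section of the counit $\mathrm{Ind}\,\mathrm{Res}(z)\to z$, realizing $z$ as a retract of $\mathrm{Ind}\,\mathrm{Res}(z)\simeq 0$; by the assumed idempotent completeness (pushed from $\dgHo(\cA)$ to $\dgHo(\cA^\cG)$ via the same averaging idempotent), $z\simeq 0$. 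Hence $y$ generates $\dgHo(\cA^\cG)$.

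Finally, compactness of $y$ in $\dgHo(\cA^\cG)$ follows from compactness of $\mathrm{Res}(y)=\oplus_\rho \phi_\rho(x)$ in $\dgHo(\cA)$, together with the fact that $\mathrm{Ind}$ preserves arbitrary coproducts (so that $\mathrm{Res}$ detects and preserves compactness). Having a compact generator, Keller's dg Morita theorem yields the Morita equivalence $\cA^\cG \simeq \REnd_{\cA^\cG}(y)$, which is the claim. The main obstacle I anticipate is the third paragraph, and specifically the careful bookkeeping needed to verify that the averaged expression genuinely splits the counit in the dg setting (as opposed to just in $\dgHo$); everything else is formal manipulation of the structural natural transformations $\epsilon_{\rho,\sigma}$.
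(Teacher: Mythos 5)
Your approach is the same in spirit as the paper's, but where the paper simply cites Elagin's structural results (his Lemma~8.6, identifying $\cD_c(\cA^\cG)$ with $\cD_c(\cA)^\cG$, and his Lemma~3.8, giving the restriction/induction adjunction at the triangulated level), you re-derive that machinery from scratch with a hands-on construction of the dg-level adjunction and a Maschke averaging argument. That re-derivation is worthwhile and essentially correct, but it is assembled with a subtle gap.

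The issue is the target of the generation argument. You prove that $y$ is a weak generator of $\dgHo(\cA^\cG)$ — that vanishing of $\Hom_{\dgHo(\cA^\cG)}(y,z[n])$ for all $n$ forces $z\simeq 0$. To invoke Keller's Morita theorem and conclude that $\cA^\cG$ is Morita equivalent to $\REnd(y)$, what is actually required is that the Yoneda image $\underline{y}:=\cA^\cG(-,y)$ is a compact generator of the full derived category $\cD(\cA^\cG)$. Your vanishing test only sees representable objects, i.e.\ objects in the image of $\dgHo(\cA^\cG)\to\cD(\cA^\cG)$, and without the identification $\dgHo(\cA^\cG)\simeq\cD_c(\cA^\cG)$ (which is precisely what the paper extracts from Elagin's Lemma~8.6 under the standing hypotheses on $\cA$) there is no direct passage from weak generation of $\dgHo(\cA^\cG)$ to compact generation of $\cD(\cA^\cG)$.

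The good news is that your Maschke retract already contains the fix — you just need to deploy it for \emph{classical} (thick) generation rather than as an auxiliary step in a Hom-vanishing argument. Since $\mathrm{char}(k)\nmid|\cG|$, the averaged map $\tfrac{1}{|\cG|}(\theta_\rho)_\rho$ is a $\cG$-equivariant section of the counit, so \emph{every} $z\in\cA^\cG$ is a retract of $\mathrm{Ind}\,\mathrm{Res}(z)$. Because $x$ classically generates $\dgHo(\cA)$, the object $\mathrm{Res}(z)$ lies in the thick subcategory generated by $x$; applying the dg functor $\mathrm{Ind}$ (which preserves cones, shifts, and direct summands) puts $\mathrm{Ind}\,\mathrm{Res}(z)$ in the thick subcategory of $\dgHo(\cA^\cG)$ generated by $y=\mathrm{Ind}(x)$, hence $z$ is there too. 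Passing through Yoneda, every $\underline{z}$ is in the thick subcategory of compact objects generated by $\underline{y}$; since the $\underline{z}$'s compactly generate $\cD(\cA^\cG)$, so does $\underline{y}$, and Keller's theorem applies. Phrasing the argument this way both closes the gap and makes the role of $\mathrm{char}(k)\nmid|\cG|$ transparent, matching what Elagin's cited lemmas provide in the paper's version.
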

\begin{proof}
As proved in \cite[Lem.~8.6]{Elagin}, the category $\cD_c(\cA^\cG)\simeq \dgHo(\cC_{c, \dg}(\cA^\cG))$ is equivalent to $\dgHo(\cC_{c, \dg}(\cA))^\cG \simeq \cD_c(\cA)^\cG$. Moreover, following \cite[Lem.~3.8]{Elagin}, we have the adjunction of categories:
\begin{equation}\label{eq:adjunction}
\xymatrix{
\cD_c(\cA)^\cG\ar@<1ex>[d]^-{(M,\theta_\sigma) \mapsto M}\\
\cD_c(\cA) \ar@<1ex>[u]^-{M\mapsto (\oplus_\rho \phi_\rho(M), \theta_\sigma)}\,.
}
\end{equation}
Using the fact that the right adjoint functor is conservative, we conclude from \eqref{eq:adjunction} that if $x$ generates the triangulated category $\dgHo(\cA)\simeq \cD_c(\cA)$, then the image of the $\cG$-equivariant object $(\oplus_{\rho \in \cG}\phi_\rho(x), \theta_\sigma) \in \cA^\cG$ under the Yoneda dg functor generates the triangulated category $\cD_c(\cA^\cG)$. This implies that the dg category $\cA^\cG$ is Morita equivalence to the dg algebra of endomorphisms of $(\oplus_{\rho \in \cG}\phi_\rho(x), \theta_\sigma)$.
\end{proof}
\begin{remark}[$\cG$-equivariant dg functors]\label{rk:bijection}
Let $\cG \circlearrowright \cA$ and $\cG \circlearrowright \cB$ be two dg categories. The assignment $(F,\eta_\sigma) \mapsto (F,(\eta_\sigma \circ \phi_{\sigma^{-1}})\circ (F \circ \epsilon^{-1}_{\sigma, \sigma^{-1}}))$ establishes a bijection between the set of $\cG$-equivariant dg functors $\cG \circlearrowright \cA \to \cG \circlearrowright \cB$ and the set of of $\cG$-equivariant objects in $\cG \circlearrowright \Fun_\dg(\cA,\cB)$ (see Remark \ref{rk:dgcatdgfun}). Its inverse is given by the assignment $(F,\theta_\sigma) \mapsto (F, (\phi_\sigma \circ F \circ \epsilon_{\sigma^{-1},\sigma}) \circ (\theta_\sigma \circ \phi_\sigma))$.
\end{remark}


Given a $\cG$-equivariant dg functor $F\colon \cG \circlearrowright \cA \to \cG \circlearrowright \cB$, the  assignment $(x,\theta_\sigma) \mapsto (F(x), \eta_\sigma \circ F(\theta_\sigma))$ yields a dg functor $F^\cG\colon \cA^\cG \to \cB^\cG$. We hence obtain a functor
\begin{eqnarray}\label{eq:equiv-functor}
\dgcat^\cG(k) \too \dgcat(k) && \cG \circlearrowright \cA \mapsto \cA^\cG\,.
\end{eqnarray}
\begin{proposition}\label{prop:adjunction}
We have the adjunction of categories:
\begin{equation*}\label{eq:adjunction-19}
\xymatrix{
\dgcat^\cG(k) \ar@<1ex>[d]^-{\eqref{eq:equiv-functor}}\\
\dgcat(k) \ar@<1ex>[u]^-{\eqref{eq:trivial}}\,.
}
\end{equation*}
\end{proposition}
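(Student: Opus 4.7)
The plan is to exhibit, for every $\cB \in \dgcat(k)$ and every $\cG\circlearrowright\cA\in\dgcat^\cG(k)$, a natural bijection
$$\Hom_{\dgcat^\cG(k)}(\cG \circlearrowright_1 \cB,\, \cG \circlearrowright \cA)\;\cong\;\Hom_{\dgcat(k)}(\cB,\, \cA^\cG),$$
and then check naturality in both arguments. The whole proof reduces to a careful unpacking of definitions, via the shortcut provided by Remark \ref{rk:bijection}.

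Under that remark, the left-hand side is in bijection with the set of $\cG$-equivariant objects of the $\cG$-dg category $\cG\circlearrowright \Fun_\dg(\cB,\cA)$ from Remark \ref{rk:dgcatdgfun}. Because the $\cG$-action on the source $\cB$ is trivial (so $\phi_\sigma^{\mathrm{src}}=\id$ and $\epsilon_{\rho,\sigma}^{\mathrm{src}}=\id$), this induced action simplifies to $F\mapsto \phi_\sigma\circ F$, with structure isomorphisms inherited solely from the $\epsilon_{\rho,\sigma}$ of $\cG\circlearrowright\cA$. Unpacking Definition \ref{def:G-equivariant}(i) for this simplified action, a $\cG$-equivariant object amounts exactly to a dg functor $F\colon\cB\to\cA$ together with a family of natural isomorphisms of dg functors $\theta_\sigma\colon F\Rightarrow \phi_\sigma\circ F$ (with $\theta_1=\id$) satisfying the pointwise cocycle identity
$$\theta_{\rho\sigma}(x)\;=\;\epsilon_{\rho,\sigma}(F(x))\circ\phi_\rho(\theta_\sigma(x))\circ\theta_\rho(x)\qquad (\sigma,\rho\in\cG,\; x\in\cB).$$

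Next I would observe that this same datum is tautologically a dg functor $\tilde F\colon \cB\to\cA^\cG$: send an object $x\in\cB$ to the $\cG$-equivariant object $(F(x),\theta_\sigma(x))$ of $\cG\circlearrowright\cA$ — the cocycle in Definition \ref{def:G-equivariant}(i) is precisely the identity displayed above — and send a morphism $f\in\cB(x,y)$ to $F(f)$, which qualifies as a morphism of $\cG$-equivariant objects by the naturality of $\theta_\sigma$ in $x$. Conversely, any dg functor to $\cA^\cG$ splits uniquely into its object-level assignment $F\colon\cB\to\cA$ together with the family of structure isomorphisms $\theta_\sigma$, recovering the previous datum. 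These two constructions are visibly mutually inverse, producing the sought-for bijection.

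Finally, I would verify naturality: pre-composition by $H\colon\cB'\to\cB$ on the left corresponds to pre-composition by $H$ on the right, and post-composition by a $\cG$-equivariant dg functor $K\colon \cG\circlearrowright\cA\to\cG\circlearrowright\cA'$ corresponds to post-composition by the dg functor $K^\cG\colon\cA^\cG\to\cA'^\cG$ produced by \eqref{eq:equiv-functor}. Both assertions are immediate from the object-wise description of $\tilde F$. The only potential obstacle, really a bookkeeping one, is keeping track of the various $\phi$'s and $\epsilon$'s on source and target when matching cocycle identities; what makes the argument painless is that the triviality of the source $\cG$-action kills every occurrence of $\phi^{\mathrm{src}}$ and $\epsilon^{\mathrm{src}}$, so the two cocycle conditions — one for $\cG$-equivariant dg functors, one for $\cG$-equivariant objects — become literal transcriptions of one another.
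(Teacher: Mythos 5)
Your proof is correct and proves precisely the claimed adjunction $\cG\circlearrowright_1(-)\dashv(-)^\cG$. The paper proceeds by writing down the unit $\cA\to\cA^\cG$, $x\mapsto(x,\theta_\sigma:=\id)$, and the counit $\cG\circlearrowright_1\cB^\cG\to\cG\circlearrowright\cB$, $(x,\theta_\sigma)\mapsto x$ with $\eta_\sigma:=\theta_\sigma$, and asserts the triangle identities; you instead construct the hom-set bijection directly, using Remark \ref{rk:bijection} to reduce the left-hand side to $\cG$-equivariant objects of $\cG\circlearrowright\Fun_\dg(\cB,\cA)$ and then unwinding. These are equivalent presentations of the same elementary verification, and your route actually makes the underlying mechanism more transparent: when the source $\cG$-action is trivial, the cocycle condition in the definition of a $\cG$-equivariant dg functor collapses to exactly the cocycle condition defining objects of $\cA^\cG$, applied pointwise. (A small observation: invoking Remark \ref{rk:bijection} is not strictly needed, since with a trivial source action it specializes to the identity and one can unpack the definition of $\cG$-equivariant dg functor directly; but it does no harm.) The naturality check at the end is also the content that the paper leaves implicit in ``satisfies the axioms of an adjunction,'' so your version is slightly more complete.
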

\begin{proof}
Let $\cA$ be a small dg category and $\cB$ a small $\cG$-dg category. The unit of the adjunction is given by the dg functors $\cA \to \cA^\cG, x \mapsto (x, \theta_\sigma:= \id)$, and the counit by the $\cG$-equivariant dg functors $ \cG \circlearrowright_1 \cB^\cG \to \cG \circlearrowright \cB, (x, \theta_\sigma) \mapsto ((x, \theta_\sigma)\mapsto x, \eta_\sigma:=\theta_\sigma)$. This data satisfies the axioms of an adjunction.
%
%
\end{proof}
\begin{remark}[$\cG^\vee$-action]\label{rk:character-action}
Let $\cA$ be a $\cG$-dg category and $\cA^\cG$ the associated dg category of $\cG$-equivariant objects. Given a character $\chi\colon \cG \to k^\times$, the assignment $(x, \theta_\sigma) \mapsto (x, \chi(\sigma)\theta_\sigma)$ yields a dg equivalence $\phi_\chi\colon \cA^\cG \to \cA^\cG$. These dg equivalences and the natural isomorphisms of dg functors $\epsilon_{\psi,\chi}:=\id$ equip $\cA^\cG$ with a $\widehat{\cG}$-action, where $\widehat{\cG}$ stands for the group of characters of $\cG$. Since this construction is functorial on $\cA$, it gives rise to a functor
\begin{eqnarray}\label{eq:lifting}
\dgcat^\cG(k) \too \dgcat^{\cG^\vee}(k) && \cG \circlearrowright \cA \mapsto \cG^\vee \circlearrowright \cA^\cG\,.
\end{eqnarray}
The composition of \eqref{eq:lifting} with the restriction functor \eqref{eq:forgetful} agrees with \eqref{eq:equiv-functor}.
\end{remark}
\subsection{Twisted equivariant objects}
Let $\alpha\colon \cG \times \cG \to k^\times$ be a $2$-cocycle and $\cG \circlearrowright \cA$ a $\cG$-dg category. Similarly to Definition \ref{def:G-equivariant}, an {\em $\alpha$-twisted $\cG$-equivariant object in $\cG \circlearrowright \cA$} consists of an object $x \in \cA$ and of a family of closed degree zero isomorphisms $\theta_\sigma\colon x \to \phi_\sigma(x), \sigma \in \cG$, with $\theta_1=\id$, such that the compositions 
$$
x \stackrel{\theta_\rho}{\too} \phi_\rho(x) \stackrel{\phi_\rho(\theta_\sigma)}{\too} \phi_\rho(\phi_\sigma (x)) \stackrel{\epsilon_{\rho,\sigma}(x)}{\too} \phi_{\rho\sigma}(x)
$$
are equal to $\alpha( \rho, \sigma) \theta_{\rho\sigma}\colon x \to \phi_{\rho\sigma}(x)$ for every $\sigma, \rho \in \cG$. A {\em morphism of $\alpha$-twisted $\cG$-equivariant objects $(x,\theta_\sigma) \to (y,\theta_\sigma)$} is an element $f$ of the dg $k$-vector space $\cA(x,y)$ such that $\theta_\sigma \circ f = \phi_\sigma(f) \circ \theta_\sigma$ for every $\sigma \in \cG$. Let us write $\cA^{\cG,\alpha}$ for the dg category of $\alpha$-twisted $\cG$-equivariant objects in $\cG \circlearrowright \cA$. Note that $\cA^{\cG, \alpha}$ identifies with the dg category of $\cG$-equivariant objects in $(\cG \circlearrowright \cA) \otimes (\cG \circlearrowright_{\alpha^{-1}}k)$.
\begin{example}[Twisted equivariant perfect complexes]\label{ex:alpha-twisted}
Let $\cG \circlearrowright \perf_\dg(X)$ be as in Example \ref{ex:G-schemes}. Similarly to Example \ref{ex:G-equivariant}, $\perf_\dg(X)^{\cG, \alpha}$ is Morita equivalent to the dg category of $\alpha$-twisted $\cG$-equivariant perfect complexes $\perf_\dg^{\cG, \alpha}(X)$.
\end{example}
\subsection{Group actions on categories}
All the constructions and results of \S\ref{sec:actions} hold {\em mutatis mutandis} for ordinary categories: simply remove the shorthand ``dg''. This fact was already implicitly used in the proof of Lemma \ref{lem:generator}. 
%
\section{Equivariant noncommutative motives}\label{sec:equivariant}
In this section we introduce the categories of equivariant noncommutative Chow motives and equivariant noncommutative numerical motives. We start by recalling the definition of their non-equivariant predecessors; for further information on noncommutative motives, we invite the reader to consult the book \cite{book}. In the remainder of the article we will always assume that $\mathrm{char}(k)\nmid |\cG|$. 
\subsection{Noncommutative Chow motives}
As proved in \cite[Cor.~5.10]{Additive}, there is a canonical bijection between $\Hom_{\Hmo(k)}(\cA,\cB)$ and the set of isomorphism classes of the triangulated category $\rep(\cA,\cB)$. Under this bijection, the composition law of $\Hmo(k)$ is induced by the triangulated bifunctors
\begin{eqnarray}\label{eq:bifunctor}
\rep(\cA,\cB) \times \rep(\cB,\cC) \too \rep(\cA,\cC) && (\mathrm{B}, \mathrm{B}') \mapsto \mathrm{B}\otimes_\cB \mathrm{B}'
\end{eqnarray}
and the localization functor from $\dgcat(k)$ to $\Hmo(k)$ is given by 
\begin{eqnarray}\label{eq:functor1}
\dgcat(k) \too \Hmo(k) & \cA \mapsto \cA & (\cA \stackrel{F}{\to}\cB) \mapsto {}_F \mathrm{B}\,.
\end{eqnarray}
The {\em additivization} of $\Hmo(k)$ is the additive category $\Hmo_0(k)$ with the same objects and with abelian groups of morphisms $\Hom_{\Hmo_0(k)}(\cA,\cB)$ given by $K_0\rep(\cA,\cB)$, where $K_0\rep(\cA,\cB)$ stands for the Grothendieck group of the triangulated category $\rep(\cA,\cB)$. The composition law is induced by the triangulated bifunctors \eqref{eq:bifunctor}. By construction, $\Hmo_0(k)$ comes equipped with the functor
\begin{eqnarray}\label{eq:functor2}
\Hmo(k) \too \Hmo_0(k) & \cA \mapsto \cA & \mathrm{B} \mapsto [\mathrm{B}]\,.
\end{eqnarray}
Let us denote by $U\colon \dgcat(k) \to \Hmo_0(k)$ the composition $\eqref{eq:functor2}\circ \eqref{eq:functor1}$. As proved in \cite[Lem.~6.1]{Additive}, the category $\Hmo_0(k)$ carries a symmetric monoidal structure induced by the tensor product of dg categories and by the triangulated bifunctors 
\begin{eqnarray*}
\rep(\cA,\cB) \times \rep(\cC,\cD) \too \rep(\cA\otimes\cC,\cB\otimes \cD) && (\mathrm{B}, \mathrm{B}') \mapsto \mathrm{B}\otimes \mathrm{B}'\,.
\end{eqnarray*}
By construction, the functor $U$ is symmetric monoidal. The category $\NChow(k)$ of {\em noncommutative Chow motives} is defined as the idempotent completion of the full subcategory of $\Hmo_0(k)$ consisting of the objects $U(\cA)$ with $\cA$ a smooth proper dg category. The category $\NChow(k)$ is additive, idempotent complete, and rigid symmetric monoidal (\ie all its objects are strongly dualizable).
\subsection{Noncommutative numerical motives}\label{sub:NNmotives}
Given an additive rigid symmetric monoidal category $\cC$, its {\em $\cN$-ideal} is defined as follows
$$ \cN(a,b)=\{f \in \Hom_\cC(a,b)\,|\,\forall g \in \Hom_\cC(b,a)\,\,\mathrm{we}\,\,\mathrm{have}\,\,\mathrm{tr}(g\circ f)=0 \}\,,$$
where $\mathrm{tr}(g\circ f)$ stands for the categorical trace of the endomorphism $g \circ f$. The category $\NNum(k)$ of {\em noncommutative numerical motives} is defined as the idempotent completion of the quotient $\NChow(k)/\cN$.
\subsection{Equivariant noncommutative Chow motives}
Let $\cG \circlearrowright \cA$ and $\cG \circlearrowright \cB$ be two small $\cG$-dg categories. As mentioned in Remark \ref{rk:dgbimodules}, the dg category $\rep_\dg(\cA,\cB)$ inherits a $\cG$-action. As a consequence, we obtain an induced $\cG$-action on the triangulated category $\dgHo(\rep_\dg(\cA,\cB)) \simeq \rep(\cA,\cB)$. Thanks to \cite[Thm.~8.7]{Elagin}, the  category of $\cG$-equivariant objects $\rep(\cA,\cB)^\cG$ is also triangulated.

Given small $\cG$-dg categories $\cG \circlearrowright \cA$, $\cG \circlearrowright \cB$, and $\cG \circlearrowright \cC$, consider the following $\cG$-equivariant dg functor ($\cG$ acts diagonally on the left-hand side)
\begin{eqnarray*}
\rep_\dg(\cA, \cB) \times \rep_\dg(\cB,\cC) \too \rep_\dg(\cA,\cC) && (\mathrm{B},\mathrm{B}') \mapsto \mathrm{B}\otimes_\cB \mathrm{B}'\,.
\end{eqnarray*}
By first applying $\dgHo(-)$ and then $(-)^\cG$, we obtain an induced triangulated bifunctor
\begin{equation}\label{eq:bifunctor1}
\rep(\cA,\cB)^\cG \times \rep(\cB,\cC)^\cG \too \rep(\cA,\cC)^\cG\,.
\end{equation}
Let $\Hmo^\cG(k)$ be the category with the same objects as $\dgcat^\cG(k)$ and with morphisms $\Hom_{\Hmo^\cG(k)}(\cG \circlearrowright \cA, \cG \circlearrowright \cB)$ given by the set of isomorphism classes of the category $\rep(\cA,\cB)^\cG$. The composition law is induced by the triangulated bifunctors \eqref{eq:bifunctor1}. Thanks to Remark \ref{rk:bijection}, we have the functor:
\begin{equation}\label{eq:functor1-1}
\dgcat^\cG(k) \too \Hmo^\cG(k) \quad \cG \circlearrowright\cA \mapsto \cG \circlearrowright \cA \quad (\cG \circlearrowright\cA \stackrel{F}{\to} \cG \circlearrowright \cB) \mapsto {}_F \mathrm{B}\,.
\end{equation}
\begin{lemma}\label{lem:inverts}
The functor \eqref{eq:functor1-1} inverts $\cG$-equivariant Morita equivalences.
\end{lemma}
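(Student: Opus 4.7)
The plan is to exhibit an explicit inverse in $\Hmo^\cG(k)$. Given a $\cG$-equivariant Morita equivalence $F\colon \cG \circlearrowright \cA \to \cG \circlearrowright \cB$, I would first recall from \cite[Cor.~5.10]{Additive} that the underlying bimodule $[{}_F\mathrm{B}]$ is already invertible in the non-equivariant $\Hmo(k)$, with inverse represented by the ``restriction'' bimodule $\mathrm{B}^F \in \rep(\cB,\cA)$ defined by $\mathrm{B}^F(z,x) := \cB(F(x),z)$; this invertibility is witnessed by natural quasi-isomorphisms
\[
{}_F\mathrm{B} \otimes_\cB \mathrm{B}^F \isotoo {}_{\id_\cA}\mathrm{B} \quad \text{and} \quad \mathrm{B}^F \otimes_\cA {}_F\mathrm{B} \isotoo {}_{\id_\cB}\mathrm{B}
\]
coming from the (co)Yoneda lemma together with the fact that the restriction functor $\cD(\cB) \to \cD(\cA)$ is an equivalence. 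The task is then to upgrade both the candidate inverse bimodule and the witnessing quasi-isomorphisms to the $\cG$-equivariant setting.

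Next, I would equip $\mathrm{B}^F$ with a $\cG$-equivariant structure. With respect to the $\cG$-action on $\rep_\dg(\cB,\cA)$ recalled in Remark \ref{rk:dgbimodules}, the natural isomorphisms $\eta_\sigma\colon F \circ \phi_\sigma \Rightarrow \phi_\sigma \circ F$ providing the $\cG$-equivariance of $F$ induce pointwise the required isomorphisms $\theta_\sigma\colon \mathrm{B}^F \to \sigma \cdot \mathrm{B}^F$. The cocycle identity $\epsilon_{\rho,\sigma}(x) \circ \phi_\rho(\theta_\sigma) \circ \theta_\rho = \theta_{\rho\sigma}$ required for a $\cG$-equivariant object is a direct translation of the axiom $\eta_{\rho\sigma} \circ (F \circ \epsilon_{\rho,\sigma}) = (\epsilon_{\rho,\sigma}\circ F) \circ (\phi_\rho \circ \eta_\sigma) \circ (\eta_\rho \circ \phi_\sigma)$ imposed on $F$. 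The analogous $\cG$-equivariant structure on ${}_F\mathrm{B}$ is precisely the one invoked by the functor \eqref{eq:functor1-1} via Remark \ref{rk:bijection}.

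Then I would verify that the two displayed quasi-isomorphisms above are morphisms in $\rep(\cA,\cA)^\cG$ and $\rep(\cB,\cB)^\cG$ respectively. Both identity bimodules ${}_{\id_\cA}\mathrm{B}$ and ${}_{\id_\cB}\mathrm{B}$ carry their tautological $\cG$-equivariant structures (since they arise from the identity $\cG$-equivariant dg functor), while tensor products of $\cG$-equivariant bimodules inherit diagonal $\cG$-equivariant structures from Remark \ref{rk:tensorproduct}. Since the witnessing maps are assembled from composition in $\cB$ and (co)Yoneda evaluation, their $\cG$-equivariance again reduces to the compatibility axioms binding $\eta_\sigma$ and $\epsilon_{\rho,\sigma}$. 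It then follows that $[{}_F\mathrm{B}]$ and $[\mathrm{B}^F]$ are mutually inverse in $\Hmo^\cG(k)$, which is what we want.

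The main obstacle is purely the bookkeeping: one must track the interaction of three layers of natural isomorphisms (the $\epsilon_{\rho,\sigma}$ from the $\cG$-actions on $\cA$ and $\cB$, the $\eta_\sigma$ from $F$, and the Yoneda identifications) and confirm that all diagrams compose coherently. No genuinely new idea beyond the axioms of a $\cG$-equivariant dg functor is required, though the verification is in the spirit of Elagin's manipulations in \cite{Elagin}.
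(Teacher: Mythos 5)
Your argument is correct in outline but genuinely different from the paper's. You attack the problem head-on by exhibiting an explicit inverse: you endow the restriction bimodule $\mathrm{B}^F$ with a $\cG$-equivariant structure built from the $\eta_\sigma$'s, then verify that the two witnessing quasi-isomorphisms are morphisms of $\cG$-equivariant objects so that $[{}_F\mathrm{B}]$ and $[\mathrm{B}^F]$ are mutually inverse in $\Hmo^\cG(k)$. The paper instead sidesteps the entire coherence check via the Yoneda lemma: invertibility of ${}_F\mathrm{B}$ in $\Hmo^\cG(k)$ is equivalent to the post-composition maps $\Hom_{\Hmo^\cG(k)}(\cG\circlearrowright\cC,\cG\circlearrowright\cA)\to\Hom_{\Hmo^\cG(k)}(\cG\circlearrowright\cC,\cG\circlearrowright\cB)$ being bijections for all test objects $\cC$, and these maps are exactly the maps on isomorphism classes induced by the $\cG$-equivariant equivalence $\rep(\cC,\cA)\to\rep(\cC,\cB)$, which (being a $\cG$-equivariant equivalence) descends to an equivalence $\rep(\cC,\cA)^\cG\to\rep(\cC,\cB)^\cG$ and hence a bijection. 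The paper's route is shorter and never needs to name an inverse bimodule, since all the equivariant-coherence work is delegated to the single abstract fact that a $\cG$-equivariant equivalence of categories induces an equivalence on $\cG$-equivariant objects. Your route is more constructive (and produces an explicit representative for the inverse morphism in $\Hmo^\cG(k)$, which could be useful downstream) but at the cost of the three-layer diagram chase you acknowledge: if you intend to keep this approach, the verification that the (co)Yoneda quasi-isomorphisms intertwine the equivariant structures should be written out, since that is where the content of the $\cG$-equivariant dg functor axiom actually enters.
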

\begin{proof}
Let $\cG \circlearrowright \cA \to \cG \circlearrowright\cB$ be a $\cG$-equivariant Morita equivalence. Thanks to the Yoneda lemma, it suffices to show that for every object $\cG \circlearrowright \cC$ the homomorphism
\begin{equation}\label{eq:induced-new}
\Hom_{\Hmo^\cG(k)}(\cG \circlearrowright \cC, \cG \circlearrowright \cA) \too \Hom_{\Hmo^\cG(k)}(\cG \circlearrowright \cC, \cG \circlearrowright \cB)
\end{equation}
is invertible. Since $\cG \circlearrowright \cA \to \cG \circlearrowright\cB$ is a $\cG$-equivariant Morita equivalence, we have an induced $\cG$-equivariant equivalence of categories $\rep(\cC, \cA) \to \rep(\cC,\cB)$, and consequence an equivalence of categories $\rep(\cC,\cA)^\cG \to \rep(\cC,\cB)^\cG$.
\end{proof}
The {\em additivization} of $\Hmo^\cG(k)$ is the category $\Hmo_0^\cG(k)$ with the same objects and with abelian groups of morphisms $\Hom_{\Hmo_0^\cG(k)}(\cG \circlearrowright \cA, \cG \circlearrowright \cB)$ given by $K_0\rep(\cA,\cB)^\cG$, where $K_0\rep(\cA,\cB)^\cG$ stands for the Grothendieck group of the triangulated category $\rep(\cA,\cB)^\cG$. The composition law is induced by the triangulated bifunctors \eqref{eq:bifunctor1}. By constrution, $\Hmo_0^\cG(k)$ comes equipped with the functor
\begin{eqnarray}\label{eq:functor2-2}
\Hmo^\cG(k) \too \Hmo_0^\cG(k) & \cG \circlearrowright\cA \mapsto \cG \circlearrowright \cA & \mathrm{B} \mapsto [\mathrm{B}]\,.
\end{eqnarray}
Let us denote by $U^\cG\colon \dgcat^\cG(k) \to \Hmo^\cG_0(k)$ the composition $\eqref{eq:functor2-2}\circ\eqref{eq:functor1-1}$. 

Given small $\cG$-dg categories $\cG \circlearrowright \cA$, $\cG \circlearrowright \cB$, $\cG \circlearrowright \cC$, and $\cG \circlearrowright \cD$, consider the following $\cG$-equivariant dg functor ($\cG$ acts diagonally on the left-hand side) 
\begin{eqnarray*}
\rep_\dg(\cA,\cB) \times \rep_\dg(\cC,\cD) \too \rep_\dg(\cA\otimes\cC,\cB\otimes \cD) && (\mathrm{B}, \mathrm{B}') \mapsto \mathrm{B}\otimes \mathrm{B}'\,.
\end{eqnarray*}
By first applying $\dgHo(-)$ and then $(-)^\cG$, we obtain an induced triangulated bifunctor
\begin{equation}\label{eq:pairing-monoidal}
\rep(\cA,\cB)^\cG \times \rep(\cC,\cD)^\cG \too \rep(\cA\otimes \cC,\cB \otimes \cD)^\cG \,.
\end{equation}
The assignment $(\cG \circlearrowright \cA, \cG \circlearrowright \cB) \mapsto \cG \circlearrowright(\cA \otimes \cB)$, combined with the triangulated bifunctors \eqref{eq:pairing-monoidal}, gives rise to a symmetric monoidal structure on $\Hmo^\cG_0(k)$ with $\otimes$-unit $U^\cG(\cG \circlearrowright_1k)$. By construction, the functor $U^\cG$ is symmetric monoidal. 
\begin{proposition}
The category $\Hmo_0^\cG(k)$ is additive. Moreover, we have
\begin{equation}\label{eq:identification}
U^\cG(\cG \circlearrowright \cA) \oplus U^\cG(\cG \circlearrowright \cB) \simeq U^\cG(\cG \circlearrowright (\cA \times \cB)) \simeq U^\cG(\cG \circlearrowright(\cA \amalg \cB))
\end{equation}
for any two small $\cG$-dg categories $\cG \circlearrowright \cA$ and $\cG \circlearrowright \cB$.
\end{proposition}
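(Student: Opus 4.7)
The plan is to verify the axioms of an additive category for $\Hmo_0^\cG(k)$ and to establish \eqref{eq:identification} by a single Yoneda argument, closely imitating the non-equivariant proof of \cite[Lem.~6.1]{Additive}.

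First I would check that composition is bilinear. The triangulated bifunctors \eqref{eq:bifunctor1} are exact in each variable, as they arise by applying $\dgHo(-)$ and then $(-)^\cG$ to the derived tensor product of representable bimodules, which is exact in each variable; exactness descends to bi-additivity on $K_0$. A zero object is provided by the empty $\cG$-dg category (with its unique $\cG$-action), since both $\rep(\emptyset,-)$ and $\rep(-,\emptyset)$ are trivial.

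The core step is to produce natural $\cG$-equivariant triangulated equivalences
\begin{equation}\label{eq:plan-rep-split}
\rep(\cC, \cA \times \cB) \simeq \rep(\cC, \cA) \times \rep(\cC, \cB) \simeq \rep(\cC, \cA \amalg \cB)
\end{equation}
for any three small $\cG$-dg categories $\cG \circlearrowright \cA$, $\cG \circlearrowright \cB$, $\cG \circlearrowright \cC$. The left-hand equivalence is induced by the projections $\cA \times \cB \to \cA$ and $\cA \times \cB \to \cB$: a $\cC$-$(\cA\times\cB)$-bimodule is determined, with no extra compatibility, by its two restrictions. The right-hand equivalence uses that $(\cA \amalg \cB)(x,y)=0$ whenever $x$ and $y$ lie in distinct components, so every bimodule splits as a direct sum indexed by the two components. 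Both equivalences are $\cG$-equivariant with respect to the diagonal $\cG$-action (Remark \ref{rk:dgbimodules}) on the left and the componentwise $\cG$-actions (Remark \ref{rk:tensorproduct}) on the right.

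Applying $(-)^\cG$ and then $K_0$ to \eqref{eq:plan-rep-split}, together with the analogous splittings obtained by exchanging source and target, yields that the representable functors $\Hom_{\Hmo_0^\cG(k)}(-, \cG \circlearrowright(\cA\times\cB))$ and $\Hom_{\Hmo_0^\cG(k)}(-, \cG \circlearrowright(\cA\amalg\cB))$ both coincide with the direct sum $\Hom_{\Hmo_0^\cG(k)}(-, \cG \circlearrowright \cA) \oplus \Hom_{\Hmo_0^\cG(k)}(-, \cG \circlearrowright \cB)$, and dually on the contravariant side. The Yoneda lemma in $\Hmo_0^\cG(k)$ then forces $U^\cG(\cG \circlearrowright(\cA\times\cB))$ and $U^\cG(\cG \circlearrowright(\cA\amalg\cB))$ to be biproducts of $U^\cG(\cG \circlearrowright\cA)$ and $U^\cG(\cG \circlearrowright\cB)$, simultaneously giving the additivity of $\Hmo_0^\cG(k)$ and the identifications \eqref{eq:identification}. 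The main technical obstacle I expect is the coherence check that the equivalences in \eqref{eq:plan-rep-split} intertwine the structure natural isomorphisms $\epsilon_{\rho,\sigma}$ correctly; this bookkeeping is the only genuinely new ingredient beyond \cite[Lem.~6.1]{Additive}.
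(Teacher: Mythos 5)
Your proposal is correct and follows essentially the same strategy as the paper's proof. The only cosmetic difference is in how the two isomorphisms in \eqref{eq:identification} are split between product and coproduct: the paper establishes a covariant decomposition $\rep(\cC,\cA\times\cB)^\cG \simeq \rep(\cC,\cA)^\cG\times\rep(\cC,\cB)^\cG$ to show that $U^\cG(\cG\circlearrowright(\cA\times\cB))$ is the \emph{product}, and a contravariant decomposition $\rep(\cA\amalg\cB,\cC)^\cG \simeq \rep(\cA,\cC)^\cG\times\rep(\cB,\cC)^\cG$ to show that $U^\cG(\cG\circlearrowright(\cA\amalg\cB))$ is the \emph{coproduct}; it then invokes $\bbZ$-linearity of $\Hmo_0^\cG(k)$ (with the zero object) to identify product with coproduct, giving \eqref{eq:identification}. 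You instead establish both covariant equivalences (and mention the contravariant ones), so both $U^\cG(\cG\circlearrowright(\cA\times\cB))$ and $U^\cG(\cG\circlearrowright(\cA\amalg\cB))$ are recognized directly as biproducts by Yoneda. Both routes close the argument; the paper's is slightly leaner in that it checks one fewer $\rep$-splitting and lets the preadditive structure do the rest.

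One sentence in your write-up is phrased too loosely: the claim that a $\cC$-$(\cA\times\cB)$-bimodule ``is determined, with no extra compatibility, by its two restrictions'' is not literally true at the level of raw bimodules, since an object of $\cC\otimes(\cA\times\cB)^\op$ records both an $\cA$- and a $\cB$-coordinate, so a bimodule carries a priori more data than a pair of restrictions. The equivalence $\rep(\cC,\cA\times\cB)\simeq\rep(\cC,\cA)\times\rep(\cC,\cB)$ holds at the level of the $\rep$-categories (as in \cite[Lem.~6.1]{Additive}), but it is not an ``obvious restriction'' statement on bimodules; the argument has to go through the module-theoretic (Morita-level) description. This does not affect the validity of your overall proof, only the informal justification of that one step.
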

\begin{proof}
By construction, the morphism sets of $\Hmo_0^\cG(k)$ are abelian groups and the composition law is bilinear. Hence, it suffices to show the isomorphisms \eqref{eq:identification}, which imply in particular that the category $\Hmo_0^\cG(k)$ admits direct sums. Given a small $\cG$-dg category $\cG \circlearrowright \cC$, we have equivalences of categories
$$
\rep(\cC,\cA\times \cB)^\cG \simeq \rep(\cC,\cA)^\cG \times \rep(\cC,\cB)^\cG \,\,\,\,
\rep(\cA \amalg \cB,\cC)^\cG \simeq \rep(\cA,\cC)^\cG \times \rep(\cB,\cC)^\cG
$$
By passing to the Grothendieck group $K_0$, we conclude that $U^\cG(\cG \circlearrowright (\cA \times \cB))$, resp. $U^\cG(\cG \circlearrowright (\cA \amalg \cB))$, is the product, resp. coproduct, in $\Hmo_0^\cG(k)$ of $U^\cG(\cG \circlearrowright \cA)$ with $U^\cG(\cG \circlearrowright \cB)$. Using the fact that the category $\Hmo_0^\cG(k)$ is $\bbZ$-linear, we  obtain finally the isomorphisms \eqref{eq:identification}.
\end{proof}
%
%
%
\begin{definition}
The category $\NChow^\cG(k)$ of {\em $\cG$-equivariant noncommutative Chow motives} is the idempotent completion of the full subcategory of $\Hmo_0^\cG(k)$ consisting of the objects $U^\cG(\cG \circlearrowright \cA)$ with $\cA$ a smooth proper dg category.
\end{definition}
Since the smooth proper dg categories are stable under (co)products, it follows from the isomorphisms \eqref{eq:identification} that the category $\NChow^\cG(k)$ is also additive.

\begin{proposition}\label{prop:rigid}
The symmetric monoidal category $\NChow^\cG(k)$ is rigid.
\end{proposition}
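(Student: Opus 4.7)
Since rigidity passes to idempotent completions, and the objects $U^\cG(\cG \circlearrowright \cA)$ with $\cA$ smooth proper generate $\NChow^\cG(k)$ up to idempotents, it suffices to exhibit a strong dual for each such object in $\Hmo_0^\cG(k)$. My candidate for the dual of $U^\cG(\cG \circlearrowright \cA)$ is $U^\cG(\cG \circlearrowright \cA^\op)$, where $\cA^\op$ carries the $\cG$-action described in Remark \ref{rk:opposite}. I will then mimic the non-equivariant construction of \cite{Additive, book} while keeping track of the $\cG$-equivariant structures.

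\textbf{Construction of (co)evaluation.} In the non-equivariant setting, rigidity of $\NChow(k)$ is witnessed by a coevaluation $U(k) \to U(\cA \otimes \cA^\op)$ and an evaluation $U(\cA^\op \otimes \cA) \to U(k)$, both represented by the identity dg $\cA$-$\cA$-bimodule ${}_\id\mathrm{B}$. This bimodule lies in $\rep(k, \cA^\op \otimes \cA)$ precisely because $\cA$ is smooth, and in $\rep(\cA \otimes \cA^\op, k)$ precisely because $\cA$ is proper (see \S\ref{sub:smooth}). The key observation is that the identity dg functor $\id \colon \cA \to \cA$ is tautologically $\cG$-equivariant (take $\eta_\sigma := \id$), so via \eqref{eq:functor1-1} it determines a canonical class $[{}_\id\mathrm{B}]$ in $K_0\rep(\cA, \cA)^\cG$. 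Combining this with the diagonal $\cG$-actions on $\cA \otimes \cA^\op$ and $\cA^\op \otimes \cA$ (Remarks \ref{rk:opposite} and \ref{rk:tensorproduct}) and the adjoint correspondence of Remark \ref{rk:bijection}, one promotes ${}_\id\mathrm{B}$ to canonical classes in $K_0\rep(k, \cA^\op \otimes \cA)^\cG$ and $K_0\rep(\cA \otimes \cA^\op, k)^\cG$, which serve as the equivariant coevaluation $\eta^\cG$ and evaluation $\epsilon^\cG$.

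\textbf{Zig-zag identities and main obstacle.} It then remains to check the two triangle identities $(\epsilon^\cG \otimes \id) \circ (\id \otimes \eta^\cG) = \id$ and $(\id \otimes \epsilon^\cG) \circ (\eta^\cG \otimes \id) = \id$, on $U^\cG(\cG \circlearrowright \cA)$ and on $U^\cG(\cG \circlearrowright \cA^\op)$ respectively. At the level of underlying (non-equivariant) bimodules, these identities are precisely the rigidity of $\cA$ in $\NChow(k)$; both composites compute to ${}_\id\mathrm{B}$ via the contraction $\mathrm{B} \otimes_\cA \mathrm{B} \simeq \mathrm{B}$ inside $\rep(\cA,\cA)$. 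The real work, and the expected main obstacle, is the equivariant enhancement: one must verify that the $\cG$-equivariant structure produced on the composite bimodule by the triangle chase agrees with the tautological $\cG$-equivariant structure on ${}_\id\mathrm{B}$. Concretely, this is a diagram chase tracking how the cocycle data $\epsilon_{\rho,\sigma}$ on $\cA \otimes \cA^\op$ (coming from Remarks \ref{rk:opposite}-\ref{rk:tensorproduct}) cancels against the mates $\epsilon_{\sigma,\sigma^{-1}}$ that appear when converting $\cG$-equivariant dg functors into $\cG$-equivariant objects via Remark \ref{rk:bijection}. Once this cancellation is verified, $\eta^\cG$ and $\epsilon^\cG$ exhibit $U^\cG(\cG \circlearrowright \cA^\op)$ as the strong dual of $U^\cG(\cG \circlearrowright \cA)$, and rigidity of $\NChow^\cG(k)$ follows.
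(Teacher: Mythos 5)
Your proposal matches the paper's own argument: the same candidate dual $U^\cG(\cG \circlearrowright \cA^\op)$, the same identity bimodule ${}_\id\mathrm{B}$ (canonically $\cG$-equivariant, living in $\rep(\cA\otimes\cA^\op,k)^\cG$ and $\rep(k,\cA^\op\otimes\cA)^\cG$ by smoothness/properness) furnishing the evaluation and coevaluation. The paper likewise asserts the zig-zag identities without spelling out the compatibility of the $\cG$-equivariant structures, so the "main obstacle" you flag is left at the same level of detail in both treatments.
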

\begin{proof}
By construction of $\NChow^\cG(k)$, it suffices to show that $U^\cG(\cG \circlearrowright \cA)$, with $\cA$ a smooth proper dg category $\cA$, is strongly dualizable. Take for dual of $U^\cG(\cG \circlearrowright \cA)$ the object $U^\cG(\cG \circlearrowright \cA^\op)$ (see Remark \ref{rk:opposite}). The dg $\cA\text{-}\cA$-bimodule
\begin{eqnarray}\label{eq:bim}
{}_{\id}\mathrm{B} \colon \cA\otimes \cA^\op \too \cC_\dg(k) && (x,y) \mapsto \cA(y,x)\,,
\end{eqnarray}
associated to the identity dg functor $\id\colon \cA \to \cA$, is canonically a $\cG$-equivariant object. Moreover, since $\cA$ is smooth proper, the dg $\cA\text{-}\cA$-bimodule \eqref{eq:bim} belongs to the triangulated categories $\rep(\cA\otimes \cA^\op, k)^\cG$ and $\rep(k, \cA^\op \otimes \cA)^\cG$. Let us then take for the evaluation morphism the Grothendieck class of \eqref{eq:bim} in
$$\Hom_{\NChow^\cG(k)}(U^\cG(\cG \circlearrowright (\cA \otimes \cA^\op)),U^\cG(\cG \circlearrowright_1 k)) \simeq K_0 \rep(\cA\otimes \cA^\op, k)^\cG\,,$$
and for the coevaluation morphism the Grothendieck class of \eqref{eq:bim} in
$$\Hom_{\NChow^\cG(k)}(U^\cG(\cG \circlearrowright_1 k), U^\cG(\cG \circlearrowright (\cA^\op \otimes \cA))) \simeq K_0 \rep(k, \cA^\op\otimes \cA)^\cG\,.$$
This data satisfies the axioms of a strongly dualizable object. 
%
\end{proof}
\begin{proposition}\label{prop:endomorphisms}
For every cohomology class $[\alpha] \in H^2(\cG,k^\times)$, the ring of endomorphisms (where multiplication is given by composition)
\begin{equation}\label{eq:endomorphisms-1}
\End_{\NChow^\cG(k)}(U^\cG(\cG \circlearrowright_\alpha k))
\end{equation}
is isomorphic to the representation ring $R(\cG)$ of the group $\cG$.
\end{proposition}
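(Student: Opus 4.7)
The plan is to unwind the definitions on both sides and observe that the $2$-cocycle $\alpha$ automatically cancels when computing endomorphisms. By construction of $\NChow^\cG(k)$, the ring \eqref{eq:endomorphisms-1} is identified with $K_0 \rep(k,k)^\cG$, where the $\cG$-action on $\rep(k,k)$ is the one inherited (via Remark \ref{rk:dgbimodules}) from the action on $\cA := \cG \circlearrowright_\alpha k$ on both sides.

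First I would identify $\rep(k,k) \simeq \cD_c(\cA^\op \otimes \cA)$ and then invoke Remarks \ref{rk:opposite} and \ref{rk:tensorproduct} to observe that $\cA^\op$ is $\cG \circlearrowright_{\alpha^{-1}} k$ and that the tensor product of two such $\cG$-dg categories is governed by the pointwise product of their associated $2$-cocycles. Since $\alpha^{-1}\cdot \alpha = 1$, the $\cG$-dg category $\cA^\op \otimes \cA$ is isomorphic to $\cG \circlearrowright_1 k$ in $\dgcat^\cG(k)$, and consequently the induced $\cG$-action on $\rep(k,k) \simeq \perf(k)$ is the trivial one.

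Having reduced to the trivial action, $\rep(k,k)^\cG$ becomes the category of $\cG$-equivariant perfect $k$-modules equipped with the trivial $\cG$-action. Next I would apply Example \ref{ex:semidirect} to the $\cG$-algebra $A=k$ with trivial action to conclude that this dg category is Morita equivalent to $\perf(k \rtimes \cG)=\perf(k[\cG])$. Since $\mathrm{char}(k)\nmid |\cG|$, Maschke's theorem ensures that $k[\cG]$ is semisimple, so $K_0\perf(k[\cG])$ is the Grothendieck group of the abelian category of finite-dimensional $\cG$-representations, which is by definition the representation ring $R(\cG)$.

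Finally, it remains to verify that composition in $\NChow^\cG(k)$ corresponds to multiplication in $R(\cG)$. Composition is induced by the bifunctor $(\mathrm{B},\mathrm{B}') \mapsto \mathrm{B} \otimes_k \mathrm{B}'$; under the identification of $\rep(k,k)^\cG$ with finite-dimensional $\cG$-representations, this becomes the tensor product of representations equipped with the diagonal $\cG$-action, which is precisely the ring structure on $R(\cG)$. The main obstacle will be matching the $\cG$-action on $\rep(k,k)$ coming from Remark \ref{rk:dgbimodules} with the one coming from viewing bimodules as dg modules over $\cA^\op \otimes \cA$, since the two descriptions package the cocycle data differently; once this bookkeeping is done, the rest of the argument is formal, and the answer $R(\cG)$ is independent of the chosen cocycle $\alpha$.
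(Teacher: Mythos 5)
Your proof is correct and follows essentially the same outline as the paper's: first reduce to the trivial cocycle, then identify the morphism group with an equivariant $K$-group, and finally match the ring structures. There are only cosmetic differences. You reduce to $\alpha=1$ by observing that the relevant $\cG$-action on $\rep(k,k)$ is governed by the cocycle of $\cA\otimes\cA^\op$, which is $\alpha\alpha^{-1}=1$; the paper states this more tersely as the identification $\rep(k,k)^{\cG,\alpha\alpha^{-1}}\simeq\rep(k,k)^\cG$, but it is the same cancellation. You then pass through Example \ref{ex:semidirect} and Maschke's theorem to get $K_0\perf(k[\cG])$, whereas the paper passes through Example \ref{ex:G-equivariant} to get $K_0\perf^\cG(\mathrm{Spec}(k))$ and appeals directly to the definition of $R(\cG)$; these are two bookkeepings of the same category of finite-dimensional $\cG$-representations. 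For the ring structure, you unwind the composition bifunctor $(\mathrm{B},\mathrm{B}')\mapsto\mathrm{B}\otimes_k\mathrm{B}'$ with its diagonal equivariant structure, while the paper invokes Eckmann--Hilton to match composition with the monoidal product; again the same content, with your version being slightly more hands-on and the paper's slightly more conceptual.
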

\begin{proof}
By construction of $\NChow^\cG(k)$, we have canonical ring identifications
$$ \End(U^\cG(\cG \circlearrowright_\alpha k)) = K_0(\rep(k,k)^{\cG, \alpha\alpha^{-1}})\simeq K_0\rep(k,k)^\cG = \End(U^\cG(\cG \circlearrowright_1 k))\,.$$
Hence, it suffices to prove the particular case $\alpha=1$. As mentioned in Example  \ref{ex:G-equivariant}, the category $\rep(k,k)^\cG \simeq \cD_c(k)^\cG \simeq \perf(\mathrm{Spec}(k))^\cG$ is equivalent to $\perf^\cG(\mathrm{Spec}(k))$. This implies that the abelian group \eqref{eq:endomorphisms-1}, with $\alpha=1$, is isomorphic to the $\cG$-equivariant Grothendieck group $K_0(\perf^\cG(\mathrm{Spec}(k)))$ of $\mathrm{Spec}(k)$. In what concerns the ring structure, the Eckmann-Hilton argument, combined with the fact that $U^\cG(\cG \circlearrowright_1 k)$ is the $\otimes$-unit of $\NChow^\cG(k)$, implies that the multiplication on \eqref{eq:endomorphisms-1} given by composition agrees with the multiplication on \eqref{eq:endomorphisms-1} induced by the symmetric monoidal structure on $\perf^\cG(\mathrm{Spec}(k))$. The proof follows now from the definition of $R(\cG)$ as the $\cG$-equivariant Grothendieck ring of $\mathrm{Spec}(k)$.
\end{proof}

\begin{example}
\begin{itemize}
\item[(i)]
When $k=\bbC$ and $\cG$ is abelian, the representation ring $R(\cG)$ identifies with the group ring $\bbZ[\widehat{\cG}]$. For example, when $\cG=C_n$ is the cyclic group of order $n$, we have $R(C_n)\simeq \bbZ[\chi]/\langle \chi^n -1\rangle$;
\item[(ii)] When $k=\bbC$ and $\cG=S_3$ is the symmetric group on $3$ letters, we have $R(S_3)\simeq \langle 1, \chi, \psi\,|\, \chi\psi=\psi\chi, \chi^2=1, \psi^2=1+\chi+\psi \rangle$;
\item[(iii)] When $k=\bbQ$ and $\cG=C_3$, we have $R(C_3)\simeq \bbZ[\chi]\langle \chi^2- \chi -2\rangle$;
\item[(iv)] Consult Serre's book \cite{book} for a detailed study of the representation ring.
\end{itemize}
\end{example}
Proposition \ref{prop:endomorphisms} gives automatically rise to the following enhancement:
\begin{corollary}\label{cor:R(G)-linear}
The category $\NChow^\cG(k)$ (and $\Hmo_0^\cG(k)$) is $R(\cG)$-linear.
\end{corollary}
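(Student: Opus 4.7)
The plan is to invoke a standard general fact about symmetric monoidal additive categories: in any such category $(\cC, \otimes, \mathbf{1})$, the endomorphism ring $\End_\cC(\mathbf{1})$ acts on every Hom group $\Hom_\cC(X,Y)$ via the assignment $(r,f) \mapsto \lambda_Y \circ (r \otimes f) \circ \lambda_X^{-1}$, where $\lambda_Z \colon \mathbf{1} \otimes Z \isoto Z$ is the unit constraint. The Eckmann--Hilton argument ensures that this action is well-defined (independent of whether we tensor on the left or on the right), that it is compatible with addition, and that composition in $\cC$ is $\End_\cC(\mathbf{1})$-bilinear. Consequently, $\cC$ is automatically $\End_\cC(\mathbf{1})$-linear.

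First, I would verify that $\Hmo_0^\cG(k)$ is symmetric monoidal and additive, which is already established in the excerpt (the symmetric monoidal structure is given right after the triangulated bifunctors \eqref{eq:pairing-monoidal}, and additivity follows from the isomorphisms \eqref{eq:identification}). The same applies to $\NChow^\cG(k)$ by Proposition \ref{prop:rigid} and the comment immediately preceding it.

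Next, I would apply the general fact with $\mathbf{1} = U^\cG(\cG \circlearrowright_1 k)$. By Proposition \ref{prop:endomorphisms} (taking $[\alpha] = 0$), we have a ring isomorphism
\[
\End_{\NChow^\cG(k)}\bigl(U^\cG(\cG \circlearrowright_1 k)\bigr) \simeq R(\cG),
\]
and the same identification holds inside $\Hmo_0^\cG(k)$ since the inclusion $\NChow^\cG(k) \hookrightarrow \Hmo_0^\cG(k)^{\natural}$ is fully faithful. Transporting the canonical $\End(\mathbf{1})$-action along this isomorphism equips every Hom group with the structure of an $R(\cG)$-module, and composition becomes $R(\cG)$-bilinear.

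There is no real obstacle here; the statement is a formal corollary. The only thing to be slightly careful about is that Proposition \ref{prop:endomorphisms} is stated for $\NChow^\cG(k)$, so for $\Hmo_0^\cG(k)$ one either invokes fully faithfulness of $\NChow^\cG(k) \hookrightarrow \Hmo_0^\cG(k)$ on the $\otimes$-unit, or equivalently recomputes $\End_{\Hmo_0^\cG(k)}(U^\cG(\cG \circlearrowright_1 k)) = K_0 \rep(k,k)^\cG$ directly as in the proof of Proposition \ref{prop:endomorphisms}.
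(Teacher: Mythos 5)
Your proof is correct and takes essentially the same route as the paper: the paper simply states that the corollary follows ``automatically'' from Proposition \ref{prop:endomorphisms}, and you have spelled out the standard categorical mechanism behind that word (the $\End(\mathbf{1})$-action on Hom groups in an additive symmetric monoidal category). Your remark about transferring the identification from $\NChow^\cG(k)$ to $\Hmo_0^\cG(k)$ via full faithfulness on the $\otimes$-unit is the right thing to observe and matches the paper's (implicit) reasoning.
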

\subsubsection{Restriction/Induction}
Recall from \eqref{eq:forgetful} the definition of the restriction functor. Clearly, it gives rise to an additive functor
\begin{eqnarray}\label{eq:restriction-1}
\Hmo_0^\cG(k) \too \Hmo_0(k) && U^\cG(\cG \circlearrowright \cA) \mapsto U(\cA)\,.
\end{eqnarray}
In the converse direction, we have the induction functor 
\begin{eqnarray}\label{eq:induction}
\Hmo_0(k) \too \Hmo_0^\cG(k) && U(\cA) \mapsto U^\cG(\cG \circlearrowright \amalg_{\rho \in \cG} \cA)\,,
\end{eqnarray}
where $\cG$ acts by permutation of the components.
\begin{proposition}\label{prop:adjunctions}
We have adjunctions of categories:
\begin{equation}\label{eq:3-adjunctions}
\xymatrix{
\Hmo_0^\cG(k) \ar@<1ex>[d]^-{\eqref{eq:restriction-1}} && \NChow^\cG(k) \ar@<1ex>[d]^-{\eqref{eq:restriction-1}} \\
\Hmo_0(k) \ar@<1ex>[u]^-{\eqref{eq:induction}} && \NChow(k) \ar@<1ex>[u]^-{\eqref{eq:induction}}\,.
}
\end{equation}
\end{proposition}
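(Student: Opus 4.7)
The strategy is to verify the adjunction at the level of hom-sets in $\Hmo_0^\cG(k)$ and $\Hmo_0(k)$, and then to lift this to $\NChow$ by observing that both functors preserve smooth proper dg categories. Concretely, given a small dg category $\cA$ and a small $\cG$-dg category $\cG \circlearrowright \cB$, we must construct a natural isomorphism
\begin{equation*}
\Hom_{\Hmo_0^\cG(k)}\bigl(U^\cG(\cG \circlearrowright \amalg_{\rho \in \cG}\cA),\, U^\cG(\cG \circlearrowright \cB)\bigr) \;\cong\; \Hom_{\Hmo_0(k)}\bigl(U(\cA),\,U(\cB)\bigr).
\end{equation*}
Unwinding definitions, the left-hand side is $K_0\rep(\amalg_{\rho}\cA,\cB)^\cG$ and the right-hand side is $K_0\rep(\cA,\cB)$, so it suffices to exhibit an equivalence of triangulated categories $\rep(\amalg_{\rho}\cA,\cB)^\cG \simeq \rep(\cA,\cB)$ that is natural in both variables.

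The key categorical computation goes as follows. Since $\amalg_{\rho \in \cG}\cA$ is a coproduct of dg categories, a dg $(\amalg_\rho \cA)$--$\cB$-bimodule is the same as a tuple $(\mathrm{B}_\rho)_{\rho \in \cG}$ of dg $\cA$--$\cB$-bimodules, and this identification induces an equivalence $\rep(\amalg_{\rho}\cA,\cB) \simeq \prod_{\rho \in \cG}\rep(\cA,\cB)$. Under this identification, the $\cG$-action from Remark \ref{rk:dgbimodules} acts by permuting the factors on the source side and by the given action $\phi_\sigma$ of $\cB$ on the target side; explicitly, $\sigma$ sends $(\mathrm{B}_\rho)$ to $(\phi_\sigma \circ \mathrm{B}_{\sigma^{-1}\rho})$ (up to the coherence isomorphisms $\epsilon_{\rho,\sigma}$ of $\cB$). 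A $\cG$-equivariant structure on $(\mathrm{B}_\rho)$ therefore consists of isomorphisms $\theta_\sigma:\mathrm{B}_\rho \isotoo \phi_\sigma \circ \mathrm{B}_{\sigma^{-1}\rho}$ satisfying the cocycle condition, which by setting $\rho=\sigma$ forces $\mathrm{B}_\sigma \simeq \phi_\sigma \circ \mathrm{B}_1$; moreover, since the stabilizer of $1 \in \cG$ acting on itself by translation is trivial, no residual compatibility is imposed on $\mathrm{B}_1$ itself. The assignment $(\mathrm{B}_\rho,\theta_\sigma)\mapsto \mathrm{B}_1$ therefore furnishes the desired equivalence $\rep(\amalg_{\rho}\cA,\cB)^\cG \simeq \rep(\cA,\cB)$, with inverse the ``free'' construction $\mathrm{B}\mapsto (\phi_\rho \circ \mathrm{B})_{\rho\in\cG}$ equipped with the tautological equivariant structure coming from $\epsilon_{\sigma,\rho}$. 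Applying $K_0$ yields the desired hom-set identification, and naturality in both variables is immediate from the construction; this produces the left-hand adjunction in \eqref{eq:3-adjunctions}.

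For the right-hand adjunction in $\NChow^\cG(k)$, one first observes that the induction functor lands in (the image of) smooth proper $\cG$-dg categories whenever $\cA$ is smooth proper, since $\amalg_{\rho \in \cG}\cA$ is a finite coproduct of smooth proper dg categories and hence smooth proper by \S\ref{sub:smooth}; dually the restriction functor clearly preserves smooth properness. Thus the adjunction on $\Hmo_0$--level restricts to the full subcategories spanned by smooth proper objects, and then extends uniquely along the idempotent completions to yield an adjunction at the level of $\NChow^\cG(k)\leftrightarrows \NChow(k)$. The principal obstacle is the bookkeeping in the second paragraph: one must verify that the permutation action of $\cG$ on $\prod_{\rho}\rep(\cA,\cB)$ really is the one induced by Remark \ref{rk:dgbimodules} (including the interaction with the coherence data $\epsilon_{\rho,\sigma}$), and that the inverse equivalence $\mathrm{B}\mapsto (\phi_\rho\circ \mathrm{B})_\rho$ is well-defined as a triangulated functor to $\rep(\amalg_\rho\cA,\cB)^\cG$; once this is in place, the adjunction follows formally.
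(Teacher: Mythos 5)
Your proof follows the same path as the paper's: the crux in both cases is the natural equivalence $\rep(\amalg_{\rho \in \cG}\cA,\cB)^\cG \simeq (\Pi_{\rho}\rep(\cA,\cB))^\cG \simeq \rep(\cA,\cB)$ given by projecting onto the component indexed by $1 \in \cG$, after which passage to $K_0$ yields the $\Hmo_0$-level adjunction and stability of smooth proper dg categories under finite coproducts gives the $\NChow$-level one. Your extra unpacking of the permutation action on $\Pi_\rho\rep(\cA,\cB)$ and the free-transitivity observation fills in details that the paper's one-line argument leaves implicit, but it is not a different method.
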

\begin{proof}
Given a small dg category $\cA$ and a small $\cG$-dg category $\cG \circlearrowright \cB$, we have a natural equivalence of triangulated categories
\begin{eqnarray*}
\rep(\amalg_{\rho \in \cG} \cA, \cB)^\cG \simeq (\Pi_{\rho \in \cG} \rep(\cA,\cB))^\cG \too \rep(\cA,\cB) && (\{\mathrm{B}_\rho\}_{\rho \in \cG}, \theta_\sigma)\mapsto \mathrm{B}_1\,.
\end{eqnarray*}
By passing to the Grothendieck group $K_0$, we obtain the left-hand side adjunction. The right-hand side adjunction follows now from the fact that smooth proper dg categories are stable under coproducts.
\end{proof}
\subsection{Equivariant noncommutative numerical motives}\label{sub:NCnumerical}
The category of $\cG$-equivariant noncommutative Chow motives is additive and rigid symmetric monoidal. Therefore, following \S\ref{sub:NNmotives}, the category $\NNum^\cG(k)$ of {\em $\cG$-equivariant noncommutative numerical motives} is defined as the idempotent completion of the quotient $\NChow^\cG(k)/\cN$. Since $\cN$ is a $\otimes$-ideal, the category $\NNum^\cG(k)$ is not only additive and idempotent complete, but also rigid symmetric monoidal. Note that endomorphism ring $\End_{\NNum^\cG(k)}(U^\cG(\cG \circlearrowright_\alpha k))$ is isomorphic to \eqref{eq:endomorphisms-1}. Similarly to Corollary \ref{cor:R(G)-linear}, this implies that $\NNum^\cG(k)$ is moreover $R(\cG)$-linear.
\subsubsection{Bilinear form} Let $\cG \circlearrowright \cA$ be a small $\cG$-dg category with $\cA$ proper. Given $\cG$-equivariant objects $(M,\theta_\sigma), (N,\theta_\sigma) \in \cD_c(\cA)^\cG$, the $k$-vector space $\Hom_{\cD_c(\cA)}(M,N)$ is finite dimensional and comes equipped with the $\cG$-action $(\sigma, f) \mapsto \theta_\sigma^{-1} \circ \phi_\sigma(f) \circ \theta_\sigma$. As a consequence, we obtain an induced bilinear form
$$
\langle-,- \rangle^\cG \colon K_0(\cD_c(\cA)^\cG) \otimes K_0(\cD_c(\cA)^\cG) \too R(\cG)
$$
defined as $([(M,\theta_\sigma)],[(N,\theta_\sigma)])\mapsto \sum_i (-1)^i \Hom_{\cD_c(\cA)}(M,N[i])$. The next result describes the $\otimes$-ideal $\cN$ in terms of the preceding bilinear form. Since this result is not used in the article, we leave the proof to the reader.
\begin{proposition}
Let $\cG \circlearrowright \cA$ and $\cG \circlearrowright \cB$ be two $\cG$-dg categories with $\cA$ and $\cB$ smooth proper. Given $[(M,\theta_\sigma)] \in K_0\rep(\cA,\cB)^\cG$, the conditions are equivalent:
\begin{itemize}
\item[(i)] We have $[(M,\theta_\sigma)]=0$ in $\Hom_{\NNum^\cG(k)}(U^\cG(\cG \circlearrowright \cA),U^\cG(\cG \circlearrowright \cB))$;
\item[(ii)] We have $\langle [(M,\theta_\sigma)], [(N, \theta_\sigma)]\rangle^\cG=0$ for every $[(N,\theta_\sigma)] \in K_0\rep(\cA,\cB)^\cG$\,.
\end{itemize}
\end{proposition}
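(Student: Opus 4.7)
The plan is to reduce the equivalence to a single trace identity. Since $\cA$ and $\cB$ are smooth proper, Proposition~\ref{prop:rigid} ensures that $U^\cG(\cG \circlearrowright \cA)$ and $U^\cG(\cG \circlearrowright \cB)$ are strongly dualizable in $\NChow^\cG(k)$, with duals $U^\cG(\cG \circlearrowright \cA^\op)$ and $U^\cG(\cG \circlearrowright \cB^\op)$. I will construct a bijection
\[
\Psi\colon K_0\rep(\cA,\cB)^\cG \isotoo K_0\rep(\cB,\cA)^\cG = \Hom_{\NChow^\cG(k)}(U^\cG(\cG \circlearrowright \cB), U^\cG(\cG \circlearrowright \cA))
\]
with the property that $\mathrm{tr}(\Psi([N]) \circ [M]) = \langle [N], [M]\rangle^\cG$ for every $[M], [N] \in K_0\rep(\cA,\cB)^\cG$. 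Given this, the equivalence $(i) \Leftrightarrow (ii)$ is immediate from the definition of the $\cN$-ideal, since the universal quantifier ``$\forall [N]$'' is symmetric in the arguments of $\langle -, -\rangle^\cG$.

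To construct $\Psi$, note that smoothness of $\cA$ yields $\rep(\cA,\cB) \simeq \cD_c(\cA^\op \otimes \cB)$, and smoothness of $\cA^\op \otimes \cB$ supplies the derived dualization $N \mapsto N^\vee := \RHom_{\cA^\op \otimes \cB}(N, \cA^\op \otimes \cB)$, inducing an equivalence $\rep(\cA, \cB)^\op \simeq \rep(\cB, \cA)$. The $\cG$-action on $\rep(\cA,\cB)$ from Remark~\ref{rk:dgbimodules} transports naturally through this duality, so the dualization descends to a bijection on $K_0$ of $\cG$-equivariant objects; define $\Psi([N]) := [N^\vee]$.

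For the trace identity, the composition $\Psi([N]) \circ [M]$ is the Grothendieck class of $M \otimes^L_\cB N^\vee \in \rep(\cA,\cA)^\cG$. Using the explicit evaluation and coevaluation morphisms from the proof of Proposition~\ref{prop:rigid} (both represented by the class of ${}_{\id}\mathrm{B}$), a direct unwinding yields
\[
\mathrm{tr}(\Psi([N]) \circ [M]) = \chi_\cG\bigl((M \otimes^L_\cB N^\vee) \otimes^L_{\cA \otimes \cA^\op} \cA\bigr) \simeq \chi_\cG\bigl(N^\vee \otimes^L_{\cA^\op \otimes \cB} M\bigr),
\]
where $\chi_\cG$ denotes the $\cG$-equivariant Euler characteristic in $R(\cG)$, which agrees with the categorical trace in $\NChow^\cG(k)$ through the identification of Proposition~\ref{prop:endomorphisms}. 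The smoothness isomorphism $N^\vee \otimes^L_{\cA^\op \otimes \cB} M \simeq \RHom_{\cA^\op \otimes \cB}(N, M)$ then converts this expression into $\chi_\cG(\RHom_{\cA^\op \otimes \cB}(N,M)) = \langle [N], [M]\rangle^\cG$, completing the identity.

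The main obstacle is the careful bookkeeping of $\cG$-equivariant structures throughout these manipulations: verifying that $(-)^\vee$ is $\cG$-equivariant (so that $\Psi$ is well defined on equivariant $K_0$), that the bimodule tensor-product rearrangements preserve the diagonal $\cG$-action, and that the resulting $\cG$-equivariant Euler characteristic genuinely computes the categorical trace in the equivariant setting. The non-equivariant analogue of this proposition is standard, so the substance of the proof lies precisely in this equivariant bookkeeping.
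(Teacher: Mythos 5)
The paper explicitly leaves this proposition without proof (``Since this result is not used in the article, we leave the proof to the reader''), so there is no internal argument to compare against. Your strategy — exploit the rigidity of $\NChow^\cG(k)$ via Proposition~\ref{prop:rigid}, dualize bimodules over $\cA^\op\otimes\cB$, and identify the categorical trace with the $\cG$-equivariant Euler characteristic of $\RHom_{\cA^\op\otimes\cB}$ — is the natural equivariant extension of the Marcolli--Tabuada argument for $\NNum(k)$ \cite{AJM}, and the structure is sound. You also honestly flag the equivariant bookkeeping (equivariance of $(-)^\vee$ and of the tensor rearrangements, the identification of $\chi_\cG$ with the trace via Proposition~\ref{prop:endomorphisms}) as the work that remains.

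There is, however, one genuine gap beyond bookkeeping. Your trace identity yields $\mathrm{tr}(\Psi([N])\circ[M])=\langle[N],[M]\rangle^\cG=\chi_\cG(\RHom_{\cA^\op\otimes\cB}(N,M))$, so the $\cN$-ideal criterion translates into ``$\langle[N],[M]\rangle^\cG=0$ for all $N$'' — i.e.\ $[M]$ lies in the \emph{right} kernel of $\langle-,-\rangle^\cG$. Condition (ii) of the proposition asserts instead that $[M]$ lies in the \emph{left} kernel. You dismiss the discrepancy with ``the universal quantifier is symmetric in the arguments,'' but for a nonsymmetric bilinear form this is false in general and requires an argument. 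Here the left and right kernels do coincide, but for a substantive reason: since $\cA^\op\otimes\cB$ is smooth proper, it has a Serre functor $S$, and $\RHom(M,N)^\vee\simeq\RHom(N,SM)$ gives $\langle M,N\rangle^{\cG\ast}=\langle N,SM\rangle^\cG$, where $\ast$ is the dual involution on $R(\cG)$; bijectivity of $S$ then identifies the two radicals. One must further check (part of the equivariance bookkeeping) that the Serre bimodule, being canonically built from the dualizing bimodule, carries the induced $\cG$-action. This Serre-duality step is not merely bookkeeping; without it the conclusion does not follow from the trace identity as stated.

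A small terminological point: the isomorphism $N^\vee\otimes^L_{\cA^\op\otimes\cB}M\simeq\RHom_{\cA^\op\otimes\cB}(N,M)$ holds already from compactness of $N$ (properness of $\cA^\op\otimes\cB$ guarantees this lands in perfect complexes), so calling it a ``smoothness isomorphism'' is slightly misleading even though all of $\cA$, $\cB$ are assumed smooth proper.
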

\subsection{Coefficients}\label{sub:coefficients}
Given a commutative ring $R$, let us write $\Hmo_0^\cG(k)_R$ for the $R$-linear additive category obtained from $\Hmo_0^\cG(k)$ by tensoring each abelian group of morphisms with $R$. By construction, $\Hmo_0^\cG(k)_R$ inherits from $\Hmo_0^\cG(k)$ a symmetric monoidal structure making the functor $ (-)_R\colon \Hmo_0^\cG(k) \to \Hmo_0^\cG(k)_R$ symmetric monoidal.  The category $\NChow^\cG(k)_R$ of {\em $\cG$-equivariant noncommutative Chow motives with $R$-coefficients} is defined as the idempotent completion of the subcategory of $\Hmo_0^\cG(k)_R$ consisting of the objects $U^\cG(\cG \circlearrowright \cA)_R$ with $\cA$ a smooth proper dg category. In the same vein, the category $\NNum^\cG(k)_R$ of {\em $\cG$-equivariant noncommutative numerical motives with $R$-coefficients} is defined as the idempotent completion of the quotient $\NChow^\cG(k)_R/\cN$. When $\bbQ \subseteq R$, $\NNum^\cG(k)_R$ is equivalent to the idempotent completion of the category obtained from $\NNum^\cG(k)$ by tensoring each abelian group of morphisms with $R$; see \cite[Prop.~1.4.1]{Brugieres}.
\subsection{Decomposition}\label{sub:decomposition}
\begin{proposition}\label{prop:decomposition}
When $1/|\cG|\in R$, we have a decomposition of $R$-linear, idempotent complete, rigid symmetric monoidal categories:
\begin{equation}\label{eq:decomposition}
\NChow^\cG(k)_R\simeq \NChow(k)_R \times \NChow^\cG(k)^-_R\,.
\end{equation}
Under this decomposition, the restriction functor  $\NChow^\cG(k)_R \to \NChow(k)_R$ corresponds to the projection onto $\NChow(k)_R$. Consequently, the same holds for $\cG$-equivariant noncommutative numerical motives.
\end{proposition}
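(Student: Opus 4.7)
The plan is to construct a central idempotent in the representation ring $R(\cG)_R$ and use the $R(\cG)$-linearity established in Corollary~\ref{cor:R(G)-linear} to produce the decomposition. First I would set $e := \frac{1}{|\cG|}[\mathrm{reg}] \in R(\cG)_R$, where $\mathrm{reg}$ denotes the regular representation (this makes sense precisely because $1/|\cG| \in R$). A straightforward character computation yields $\mathrm{reg} \otimes \rho \simeq \mathrm{reg}^{\oplus \dim \rho}$ for every $\rho$, whence $e \cdot [\rho] = \epsilon([\rho])\, e$, where $\epsilon \colon R(\cG) \to \bbZ$ is the augmentation (rank) map. Since $\epsilon(e) = |\cG|/|\cG| = 1$, the element $e$ is an idempotent and yields a ring decomposition $R(\cG)_R \simeq R \cdot e \oplus I_R$, where $I := \ker \epsilon$.

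By Corollary~\ref{cor:R(G)-linear}, $R(\cG)_R$ acts centrally on all Hom groups of $\NChow^\cG(k)_R$, so $e$ determines a central idempotent natural endomorphism of the identity functor. Idempotent completeness then splits every object as $X \simeq eX \oplus (1-e)X$. Letting $\cC_+$ (resp.~$\cC_-$) denote the full subcategory of objects where $e$ acts as the identity (resp.~as zero), we obtain a decomposition $\NChow^\cG(k)_R \simeq \cC_+ \times \cC_-$ of $R$-linear idempotent complete additive categories. The tensor structure passes to each factor, since $e \otimes e = \frac{1}{|\cG|^2}[\mathrm{reg} \otimes \mathrm{reg}] = \frac{1}{|\cG|}[\mathrm{reg}] = e$ (using $\mathrm{reg} \otimes \mathrm{reg} \simeq \mathrm{reg}^{\oplus |\cG|}$); rigidity of $\cC_+$ and $\cC_-$ is inherited from $\NChow^\cG(k)_R$ via Proposition~\ref{prop:rigid}.

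It remains to identify $\cC_+ \simeq \NChow(k)_R$ via the restriction functor $\pi$ from \eqref{eq:restriction-1}. Since the restriction of $\cG$-representations along $\{1\} \hookrightarrow \cG$ is precisely the augmentation, we have $\pi(e) = 1$ and $\pi(1-e) = 0$; hence $\pi$ vanishes on $\cC_-$ and factors as $\pi_+ \colon \cC_+ \to \NChow(k)_R$. Essential surjectivity is immediate: given $Y = U(\cA)_R$ with $\cA$ smooth proper, the object $e \cdot U^\cG(\cG \circlearrowright_1 \cA)_R \in \cC_+$ restricts to $\pi(e) \cdot U(\cA)_R = Y$. For full faithfulness, I would invoke the adjunction $\mathrm{Ind} \dashv \pi$ of Proposition~\ref{prop:adjunctions} together with Frobenius-type identities at the level of Hom groups: $\pi_* \circ \mathrm{Ind}_* = |\cG| \cdot \id$ (since $\mathrm{Ind}(\cA) = \amalg_{\rho \in \cG} \cA$ involves $|\cG|$ copies) and $\mathrm{Ind}_* \circ \pi_* = (-) \cdot [\mathrm{reg}]$ (interpreting the monad $\mathrm{Ind} \circ \pi$ via the $R(\cG)$-action). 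Combining these shows that after inverting $|\cG|$, the composite $\frac{1}{|\cG|}\mathrm{Ind}_* \circ \pi_*$ equals multiplication by $e$, so $\pi_*$ restricts to an isomorphism $e \cdot \Hom_{\NChow^\cG(k)_R}(X, X') \isotoo \Hom_{\NChow(k)_R}(\pi X, \pi X')$. Setting $\NChow^\cG(k)^-_R := \cC_-$ completes the proof for Chow motives; the analogous statement for $\NNum^\cG$ follows because the $\cN$-ideal is stable under the $R(\cG)$-action (see \S\ref{sub:NCnumerical}).

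The main technical obstacle will be verifying the two Frobenius identities above at the level of the Hom groups $K_0\rep(\cA,\cB)^\cG$; this amounts to unwinding the triangulated bifunctor \eqref{eq:bifunctor1} along the induction/restriction adjunction and checking that the unit and counit induce the expected transformations on Grothendieck groups.
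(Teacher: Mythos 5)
Your strategy matches the paper's: both construct the orthogonal idempotents $e^{\pm}$ in $R(\cG)_R$ from the class of the regular representation, split $\NChow^\cG(k)_R$ using the $R(\cG)$-linearity of Corollary~\ref{cor:R(G)-linear} together with idempotent completeness (the paper phrases this as tensoring with the decomposed $\otimes$-unit, which amounts to the same maneuver), and then attempt to realize the $e^+$-factor as $\NChow(k)_R$ via the restriction functor.

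The gap is precisely in the full-faithfulness step, and in particular in the asserted Frobenius identity $\pi_*\circ\mathrm{Ind}_*=|\cG|\cdot\id$. You correctly observed that $\tfrac{1}{|\cG|}\mathrm{Ind}_*\circ\pi_*$ is multiplication by $e$, which gives injectivity of $\pi_*$ on $e\cdot\Hom$. But the other composite is not $|\cG|\cdot\id$: the isomorphism $\pi(\mathrm{Ind}(X))\simeq X^{\oplus|\cG|}$ is a statement about objects, not a scalar identity on Hom groups, and the relevant restriction--transfer composite on $K_0\rep(\cA,\cB)$ sends $[M]$ to $\sum_{\rho\in\cG}[\phi_\rho(M)]$, which equals $|\cG|[M]$ only when $[M]$ is fixed by the $\cG$-action. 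Hence the restriction homomorphism $e\cdot K_0\rep(\cA,\cB)^\cG_R\to K_0\rep(\cA,\cB)_R$ is injective but lands only in the $\cG$-invariant subgroup; it is not surjective in general. For a concrete counterexample take $\cG=C_2$ acting on $\cA=\cB=k\amalg k$ by swapping the two components: here $K_0\rep(\cA,\cB)\cong\bbZ^4$, while $K_0\rep(\cA,\cB)^\cG\cong\bbZ^2$ (two free orbits) on which $e$ acts as the identity after inverting $2$, and the restriction sends the generators to $(1,0,0,1)$ and $(0,1,1,0)$, of rank $2$; equivalently $\End\bigl(U^\cG(\cG\circlearrowright(k\amalg k))^+_R\bigr)\cong R^2$ whereas $\End\bigl(U(k\amalg k)_R\bigr)\cong R^4$. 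So the restriction functor from $\cC_+$ is faithful but not full, and the claimed equivalence with $\NChow(k)_R$ does not follow as written. Note that the paper's own explicit inverse $[M]\mapsto\tfrac{1}{|\cG|}[(\oplus_\rho\phi_\rho(M),\theta_\sigma)]$ has the identical defect---composed with restriction it produces $\tfrac{1}{|\cG|}\sum_\rho[\phi_\rho(M)]$, not $[M]$---so it is only a one-sided inverse, and the argument needs an additional ingredient (or a modified statement) at this point.
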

\begin{proof}
Let us denote by $k\cG$ the regular representation. Since $1/|\cG| \in R$, we can consider the orthogonal idempotents~$e^+:=[k\cG]/|\cG|$~and~$e^-:=1 - [k\cG]/|\cG|$~of the representation ring $R(\cG)_R$. Using the identification of Proposition \ref{prop:endomorphisms} between $\End_{\NChow^\cG(k)_R}(U^\cG(\cG \circlearrowright_1k)_R)$ and $R(\cG)_R$ and the fact that $\NChow^\cG(k)_R$ is idempotent complete,~we~obtain~a~decomposition~of~the~$\otimes$-unit~object 
$$U^\cG(\cG \circlearrowright_1k)_R \simeq U^\cG(\cG \circlearrowright_1k)^+_R \oplus U^\cG(\cG \circlearrowright_1k)^-_R\,,$$ 
where $U^\cG(\cG \circlearrowright_1k)^+_R$, resp. $U^\cG(\cG \circlearrowright_1k)^-_R$, stands for the image of $e^+$, resp. $e^-$. Given an object $N\!\!M\in \NChow^\cG(k)$, let $N\!\!M^+$, resp. $N\!\!M^-$, be the tensor product of $N\!\!M$ with $U^\cG(\cG \circlearrowright_1 k)^+_R$, resp. $U^\cG(\cG \circlearrowright_1 k)^-_R$. In the same vein, let us write $\NChow^\cG(k)^+_R$, resp. $\NChow^\cG(k)^-_R$, for the full subcategory of $\NChow^\cG(k)_R$ consisting of the objects $N\!\!M^+$, resp. $N\!\!M^-$. Since $e^+$ and $e^-$ are orthogonal idempotents, the direct sum functor $(N\!\!M^+, N\!\!M^-) \mapsto N\!\!M^+ \oplus N\!\!M^-$ gives rise to a decomposition of $R$-linear, idempotent complete, rigid symmetric monoidal categories:
$$\NChow^\cG(k)_R\simeq \NChow^\cG(k)^+_R \times \NChow^\cG(k)^-_R\,.$$
The rank homomorphism $R(\cG)_R \twoheadrightarrow R$ sends $e^+$ to $1$ and $e^-$ to $0$. Consequently, given a small $\cG$-dg category $\cG \circlearrowright \cA$, the restriction functor sends $U^\cG(\cG \circlearrowright \cA)^+_R$ to $U(\cA)_R$ and $U^\cG(\cG \circlearrowright \cA)^-_R$ to zero. Since this functor is essentially surjective, it remains then only to show that the restriction homomorphisms
$$ \Hom_{\NChow^\cG(k)_R}(U^\cG(\cG \circlearrowright \cA)^+_R, U^\cG(\cG \circlearrowright \cB)^-_R)\too \Hom_{\NChow(k)_R}(U(\cA)_R, U(\cB)_R)$$
are invertible. Their inverses are provided by the homomorphisms
\begin{eqnarray*}
K_0(\rep(\cA,\cB))_R \too K_0(\rep(\cA,\cB)^\cG)_R && [M] \mapsto [(\oplus_{\rho \in \cG}\phi_\rho(M), \theta_\sigma)]/|\cG|\,,
\end{eqnarray*}
where $\theta_\sigma$ is given by the collection of isomorphisms $\epsilon_{\sigma, \rho}(M)^{-1}$.
\end{proof}
\section{Equivariant and enhanced additive invariants}\label{sec:additive}
Given a small dg category $\cA$, let $T(\cA)$ be the dg category of pairs $(i,x)$, where $i \in \{1,2\}$ and $x \in \cA$. The dg $k$-vector spaces $T(\cA)((i,x), (j,y))$ are given by $\cA(x,y)$ if $j \geq i$ and are zero otherwise. Note that we have two inclusion dg functors $\iota_1, \iota_2\colon \cA \to T(\cA)$. A functor $E\colon \dgcat(k) \to \mathrm{D}$, with values in an additive category, is called an {\em additive invariant} if it satisfies the following two conditions:
\begin{itemize}
\item[(i)] it sends Morita equivalences to isomorphisms;
\item[(ii)] given a small dg category $\cA$, the dg functors $\iota_1, \iota_2$ induce an isomorphism\footnote{Condition (ii) can be equivalently formulated in terms of semi-orthogonal decompositions in the sense of Bondal-Orlov \cite{BO}; consult \cite[Thm.~6.3(4)]{Additive} for details.}
$$ (E(\iota_1)\,\,E(\iota_2)) \colon E(\cA) \oplus E(\cA) \too E(T(\cA))\,.$$
\end{itemize}
Examples of additive invariants include algebraic $K$-theory, Hochschild homology $HH$, cyclic homology $HC$, periodic cyclic homology $HP$, negative cyclic homology $HN$, the mixed complex $\mathrm{C}$, topological Hochschild homology $THH$, topological cyclic homology $TC$, etc; consult \cite[\S2.2]{book} for details. As proved in \cite[Thms.~5.3 and 6.3]{Additive}, the functor $U\colon \dgcat(k) \to \Hmo_0(k)$ is the {\em universal additive invariant}, \ie given any additive category $\mathrm{D}$ we have an induced equivalence of categories
\begin{equation}\label{eq:equivalence-categories}
U^\ast\colon \Fun_{\mathrm{additive}}(\Hmo_0(k), \mathrm{D}) \too \Fun_{\mathrm{add}}(\dgcat(k), \mathrm{D})\,,
\end{equation}
where the left-hand side denotes the category of additive functors and the right-hand side the category of additive invariants. 
\begin{remark}[Additive invariants of twisted group algebras]\label{rk:additivetwisted}
Let $\alpha\colon \cG \times \cG \to k^\times$ be a $2$-cocycle and $k_\alpha[\cG]$ the associated twisted group algebra. Recall that a conjugacy class $\langle \sigma \rangle$ of $\cG$ is called {\em $\alpha$-regular} if $\alpha(\sigma,\rho)=\alpha(\rho,\sigma)$ for every element $\rho$ of the centralizer $C_\cG(\sigma)$. Thanks to the (generalized) Maschke theorem, the algebra $k_\alpha[\cG]$ is semi-simple. Moreover, the number of simple $k_\alpha[\cG]$-modules is equal to the number $|\langle \cG\rangle_\alpha |$ of $\alpha$-regular conjugacy classes of $\cG$. Let $E\colon \dgcat(k) \to \mathrm{D}$ be an additive invariant. Making use of \cite[Cor.~3.20 and Rk.~3.21]{Azumaya}, we obtain~the~computations:
\begin{itemize}
\item[(i)] We have $E(k_\alpha[\cG]) \simeq \oplus^{|\langle \cG\rangle_\alpha|}_{i=1} E(D_i)$, where $D_i:=\End_{k_\alpha[\cG]}(S_i)$ is the division algebra associated to the simple (right) $k_\alpha[\cG]$-module $S_i$;
\item[(ii)] When $\mathrm{D}$ is $\bbQ$-linear, we have $E(k_\alpha[\cG])\simeq \oplus^{|\langle \cG\rangle_\alpha|}_{i=1} E(l_i)$ where $l_i$ (a finite field extension of $k$) is the center of $D_i$;
\item[(iii)] When $k$ is algebraically closed, we have $E(k_\alpha[\cG])\simeq E(k)^{\oplus |\langle \cG\rangle_\alpha|}$.
\end{itemize}
\end{remark}
\subsection{Equivariant additive invariants}\label{sub:additive}
Given an additive invariant $E$, the associated {\em $\cG$-equivariant additive invariant} is defined as the composition
\begin{equation}\label{eq:composed-G}
E^\cG \colon \dgcat^\cG(k) \stackrel{\eqref{eq:equiv-functor}}{\too} \dgcat(k) \stackrel{E}{\too} \mathrm{D}\,.
\end{equation}
From a topological viewpoint, $E^\cG(\cG \circlearrowright\cA)$ may be understood as the value of $E$ at the ``homotopy fixed points'' of the $\cG$-action on $\cA$. Here are some examples:
\begin{example}\label{ex:twisted-new}
\begin{itemize}
\item[(i)] Let $\cG \circlearrowright \perf_\dg(X)$ be as in Example~\ref{ex:G-schemes}. Thanks to Example \ref{ex:G-equivariant}, we have an identification between $E^\cG(\cG \circlearrowright \perf_\dg(X))$ and $E(\perf_\dg^\cG(X))$;
\item[(ii)] Let $\cG \circlearrowright \perf_\dg(X)$ be as in Example~\ref{ex:lines}. Thanks to Example \ref{ex:covering}, we  have an identification between $E^\cG(\cG \circlearrowright \perf_\dg(X))$ and $E(\perf_\dg(Y))$;
\item[(iii)] Let $\cG \circlearrowright A$ be as in Example \ref{ex:G-algebras}. Thanks to Example \ref{ex:semidirect}, we have an identification between $E^\cG(\cG \circlearrowright \cC_{c, \dg}(A))$ and $E(A \rtimes \cG)$;
\item[(iv)] Let $\cG \circlearrowright_\alpha k$ be as in Example \ref{ex:2-cocycles}. Thanks to Example \ref{ex:twisted}, we have an identification between $E^\cG(\cG \circlearrowright_\alpha \cC_{c,\dg}(k))$ and $E(k_\alpha[\cG])$. 
\end{itemize}
\end{example}
\begin{example}[Equivariant algebraic $K$-theory]
The composed functor \eqref{eq:composed-G} with $E:=K$ is called {\em $\cG$-equivariant algebraic $K$-theory}. Recall that a quasi-compact quasi-separated $\cG$-scheme $X$ has the {\em resolution property} if every $\cG$-equivariant coherent $\cO_X$-module is a quotient of a $\cG$-bundle. For example, the existence of an ample family of line $\cG$-bundles implies the resolution property. As explained in \cite[Cor.~3.9]{Khrisna}, whenever $X$ has the resolution property, $K^\cG(\cG \circlearrowright \perf_\dg(X))\simeq K(\perf_\dg^\cG(X))$ agrees with the $\cG$-equivariant algebraic $K$-theory $K^\cG(X)$ of $X$ in the sense of Thomason \cite[\S1.4]{Thomason}. 
\end{example}
\begin{example}[Equivariant Hochschild, cyclic, periodic, and negative homology]
The composed functor \eqref{eq:composed-G} with $E:=HH, HC, HP$, and $HN$, is called {\em $\cG$-equivariant Hochschild, cyclic, periodic, and negative homology}, respectively. Consult  Feigin-Tsygan \cite[\S A.6]{FT}\cite[\S4]{FT2} for the computations of these $\cG$-equivariant additive invariants at the small $\cG$-dg categories $\cG \circlearrowright \cC_{c, \dg}(A)$; see Example \ref{ex:twisted-new}(iii).
\end{example}
\begin{example}[Equivariant mixed complex]
The composed functor \eqref{eq:composed-G} with $E:=\mathrm{C}$ is called the {\em $\cG$-equivariant mixed complex}. Let $X$ be a smooth quasi-projective $\cG$-scheme. As proved by Baranovsky in \cite[Thm.~1.1]{Bara}, we have a decomposition
\begin{equation}\label{eq:decomp-Bara}
\mathrm{C}^\cG(\cG \circlearrowright \perf_\dg(X)) \simeq \oplus_{\langle \sigma \rangle} \mathrm{C}(\perf_\dg(X^\sigma))^{C_\cG(\sigma)}\,,
\end{equation}
where $X^\sigma\subset X$ stands for the $\sigma$-invariant subscheme.
\end{example}
\begin{example}[Orbifold cohomology theory]
Since Hochschild, cyclic, periodic, and negative cyclic homology, can be recovered from the mixed complex, the decomposition \eqref{eq:decomp-Bara} holds similarly with $\mathrm{C}$ replaced by $HH, HC, HP$, and $HN$. In the particular case of periodic cyclic homology, with $k=\bbC$, the Hochschild-Kostant-Rosenberg theorem yields a decomposition of $\bbZ/2\bbZ$-graded $\bbC$-vector spaces:
$$ HP^\cG(\cG \circlearrowright \perf_\dg(X))\simeq (\oplus_{\langle \sigma \rangle} H^{\mathrm{even}}(X^\sigma, \bbC)^{C_\cG(\sigma)}, \oplus_{\langle \sigma \rangle} H^{\mathrm{odd}}(X^\sigma, \bbC)^{C_\cG(\sigma)})\,.$$
The right-hand side is known as the even/odd {\em orbifold cohomology} $H^\ast_{\mathrm{orb}}(X,\bbC)$ of the quotient $X/\cG$ in the sense of Chen-Ruan \cite{CR}.
\end{example}
\begin{example}[Equivariant topological Hochschild and cyclic homology]
The composed functor \eqref{eq:composed-G} with $E:=THH$, resp. $E:=TC$, is called {\em $\cG$-equivariant topological Hochschild homology}, resp. {\em $\cG$-equivariant topological cyclic homology}. To the best of the author's knowledge, these invariants are new in the literature.
\end{example}
\begin{proposition}\label{prop:factorization}
Given a $\cG$-equivariant additive invariant $E^\cG$, there exists an additive functor $\overline{E^\cG}\colon \Hmo_0^\cG(k) \to \mathrm{D}$ such that $\overline{E^\cG}\circ U^\cG \simeq E^\cG$.  
\end{proposition}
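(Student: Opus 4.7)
The plan is to reduce the statement to the universality of $U\colon \dgcat(k)\to \Hmo_0(k)$ by promoting the ``$\cG$-equivariant objects'' functor $(-)^\cG$ of \eqref{eq:equiv-functor} to an additive functor $\Phi\colon \Hmo_0^\cG(k)\to \Hmo_0(k)$ at the motivic level. Concretely, by the equivalence of categories \eqref{eq:equivalence-categories}, the underlying additive invariant $E$ factors as $E \simeq \overline{E}\circ U$ for a unique (up to isomorphism) additive functor $\overline{E}\colon \Hmo_0(k)\to \mathrm{D}$. It therefore suffices to construct an additive functor $\Phi$ with $\Phi \circ U^\cG \simeq U\circ (-)^\cG$ and then to set $\overline{E^\cG}:= \overline{E}\circ \Phi$.

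On objects, $\Phi$ is forced: $\Phi(\cG \circlearrowright \cA):=\cA^\cG$. The construction on Hom-groups $K_0\rep(\cA,\cB)^\cG$ proceeds via the Morita dictionary of Remark~\ref{rk:bijection}. A $\cG$-equivariant object of $\rep(\cA,\cB)$ corresponds, under the $\cG$-equivariant Yoneda embedding $\cB\to \cC_{c,\dg}(\cB)$, to a $\cG$-equivariant dg functor $\cA\to \cC_{c,\dg}(\cB)$ up to $\cG$-equivariant Morita equivalence. Applying $(-)^\cG$ gives a dg functor $\cA^\cG\to \cC_{c,\dg}(\cB)^\cG$, and Elagin's equivalence $\cD_c(\cB)^\cG\simeq \cD_c(\cB^\cG)$ (invoked in the proof of Lemma~\ref{lem:generator} and valid since $\mathrm{char}(k)\nmid |\cG|$) promotes it to a dg functor $\cA^\cG\to \cC_{c,\dg}(\cB^\cG)$, equivalently an object $\mathrm{B}^\cG \in \rep(\cA^\cG, \cB^\cG)$. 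Passing to Grothendieck groups then yields the homomorphism
\[
K_0\rep(\cA,\cB)^\cG \too K_0\rep(\cA^\cG,\cB^\cG),\qquad [\mathrm{B}]\mapsto [\mathrm{B}^\cG].
\]
By construction, when applied to a bimodule of the form ${}_F\mathrm{B}$ coming from a $\cG$-equivariant dg functor $F$, this produces the class ${}_{F^\cG}\mathrm{B}$; combined with \eqref{eq:functor1-1} and the definition of $U\circ (-)^\cG$, this is precisely the content of the relation $\Phi \circ U^\cG \simeq U \circ (-)^\cG$.

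The main obstacle is functoriality of $\Phi$: one must verify that the composition of $\cG$-equivariant bimodules $\mathrm{B}\otimes_\cB \mathrm{B}'$, which defines composition in $\Hmo_0^\cG(k)$, is carried by $(-)^\cG$ to $\mathrm{B}^\cG\otimes_{\cB^\cG}(\mathrm{B}')^\cG$ in $\rep(\cA^\cG,\cC^\cG)$. Under the Morita dictionary this amounts to naturality of $(-)^\cG$ with respect to composition of $\cG$-equivariant dg functors through the intermediate $\cC_{c,\dg}(\cB)$, together with compatibility of Elagin's equivalence $\cD_c(-)^\cG\simeq \cD_c((-)^\cG)$ with tensor products of bimodules; both are direct consequences of the constructions in Section~\ref{sec:actions}. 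Additivity of $\Phi$ then follows from the isomorphisms \eqref{eq:identification} together with the evident identities $(\cA\times\cB)^\cG\simeq \cA^\cG\times \cB^\cG$ and $(\cA\amalg\cB)^\cG\simeq \cA^\cG\amalg\cB^\cG$. With $\Phi$ in hand, one concludes via the chain of natural isomorphisms
\[
\overline{E^\cG}\circ U^\cG = \overline{E}\circ \Phi\circ U^\cG\simeq \overline{E}\circ U \circ (-)^\cG \simeq E\circ (-)^\cG = E^\cG,
\]
completing the factorization.
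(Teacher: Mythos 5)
Your proposal follows the paper's high-level strategy exactly: construct an additive functor $\Phi\colon \Hmo_0^\cG(k)\to \Hmo_0(k)$ lifting the $\cG$-equivariant objects functor $(-)^\cG$ of \eqref{eq:equiv-functor}, and then set $\overline{E^\cG}:=\overline{E}\circ \Phi$ using the universality \eqref{eq:equivalence-categories}. The difference lies in how the homomorphism $K_0\rep(\cA,\cB)^\cG\to K_0\rep(\cA^\cG,\cB^\cG)$ is produced. You route through the Morita dictionary of Remark~\ref{rk:bijection}, interpreting a $\cG$-equivariant bimodule as a $\cG$-equivariant dg functor $\cA\to \cC_{c,\dg}(\cB)$, applying $(-)^\cG$, and then invoking Elagin's equivalence $\cD_c(\cB)^\cG\simeq \cD_c(\cB^\cG)$. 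The paper instead views $\mathrm{B}$ directly as a two-variable $\cG$-equivariant dg functor $\cA\otimes\cB^\op\to \cC_\dg(k)$, applies $(-)^\cG$, precomposes with the canonical dg functor $\cA^\cG\otimes(\cB^\op)^\cG\to(\cA\otimes\cB^\op)^\cG$, postcomposes with the $\cG$-invariants functor $\cC_\dg(k)^\cG\to\cC_\dg(k)$, and then performs a left dg Kan extension along Yoneda before applying $\dgHo(-)$ and $K_0$. The paper's version is somewhat tighter because it is a single explicit dg functor at the level of $\rep_\dg(-,-)^\cG$ (so that functoriality and compatibility with the composition bifunctors \eqref{eq:bifunctor1} are visible at the dg level, where one can Kan-extend), whereas your version needs the identification of objects of the triangulated category $\rep(\cA,\cB)^\cG$ with strict $\cG$-equivariant dg functors (after cofibrant/fibrant replacement), and defers the compatibility-with-composition check to "naturality of $(-)^\cG$ and compatibility of Elagin's equivalence with tensor products of bimodules," which is precisely the step the paper's direct dg-level construction is designed to make transparent. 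Both approaches use essentially the same ingredients — notably the hypothesis $\mathrm{char}(k)\nmid|\cG|$ and Elagin's results on equivariant triangulated categories — and both produce the same functor $\Phi$, so your proof is correct in outline; the main place you should tighten it is the passage from objects of the triangulated $\rep(\cA,\cB)^\cG$ to honest $\cG$-equivariant dg functors, which the paper avoids by working directly with $\rep_\dg(\cA,\cB)^\cG$.
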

\begin{proof}
Given two small $\cG$-dg categories $\cG \circlearrowright \cA$ and $\cG \circlearrowright \cB$, consider the dg functor $\rep_\dg(\cA, \cB)^\cG \to \rep_\dg(\cA^\cG,\cB^\cG)$ that sends $(\mathrm{B}\colon \cA\otimes \cB^\op \to \cC_\dg(k), \theta_\sigma)$ to
$$ \cA^\cG \otimes (\cB^\cG)^\op = \cA^\cG \otimes (\cB^\op)^\cG \stackrel{(a)}{\too} (\cA \otimes \cB^\op)^\cG \stackrel{\mathrm{B}^\cG}{\too} \cC_\dg(k)^\cG \stackrel{(b)}{\too} \cC_\dg(k)\,,$$
where (a) stands for the canonical dg functor and (b) for the dg functor which sends a $\cG$-representation $(M,\theta_\sigma)$ to the dg $k$-vector space of $\cG$-invariants $M^\cG$; since $\mathrm{char}(k)\nmid |\cG|$ the latter dg functor is well-defined. By first taking the left dg Kan extension (see \cite[\S4]{Kelly}) of $\rep_\dg(\cA, \cB)^\cG \to \rep_\dg(\cA^\cG,\cB^\cG)$ along the Yoneda dg functor $\rep_\dg(\cA,\cB)^\cG \to \cC_{c, \dg}(\rep_\dg(\cA,\cB)^\cG)$ and then the functor $\dgHo(-)$, we obtain an induced triangulated functor $\rep(\cA,\cB)^\cG \to \rep(\cA^\cG, \cB^\cG)$; see \cite[Thm.~8.7]{Elagin}. Consequently, by passing $K_0$, we obtain an induced homomorphism 
\begin{equation}\label{eq:assignment}
K_0\rep(\cA,\cB)^\cG \too K_0\rep(\cA^\cG, \cB^\cG)\,.
\end{equation} 
The assignments $U^\cG(\cG \circlearrowright \cA) \mapsto U(\cA^\cG)$ and \eqref{eq:assignment} give rise to an additive functor
\begin{eqnarray}\label{eq:induced-additive}
\Hmo_0^\cG(k) \too \Hmo_0(k) && U^\cG(\cG \circlearrowright \cA) \mapsto U(\cA^\cG)
\end{eqnarray}
such that $\eqref{eq:induced-additive}\circ U^\cG \simeq U \circ \eqref{eq:equiv-functor}$. Given a $\cG$-equivariant additive invariant $E^\cG$, let us denote by $\overline{E}\colon \Hmo_0(k) \to \mathrm{D}$ the additive functor corresponding to $E$ under the equivalence of categories \eqref{eq:equivalence-categories}. Under these notations, the additive functor $\overline{E^\cG}$ is now defined as the composition $\overline{E} \circ \eqref{eq:induced-additive}$. 
%
%
%
\end{proof}
\begin{remark}[Green-Julg theorem]
Recall from \eqref{eq:trivial} the definition of the trivial $\cG$-action functor. Clearly, it gives rise to an additive functor
\begin{eqnarray}\label{eq:trivial-additive}
\Hmo_0(k) \too \Hmo_0^\cG(k) && U(\cA) \mapsto U^\cG(\cG \circlearrowright_1 \cA)\,.
\end{eqnarray}
Moreover, similarly to the proof of Proposition \ref{prop:adjunction}, we have the adjunction:
\begin{equation*}
\xymatrix{
\Hmo_0^\cG(k) \ar@<1ex>[d]^-{\eqref{eq:induced-additive}}\\
\Hmo_0(k) \ar@<1ex>[u]^-{\eqref{eq:trivial-additive}}\,.
}
\end{equation*}
By analogy with Kasparov's $KK$-theory (see Meyer's survey \cite{Meyer}), this adjunction may be called the ``Green-Julg theorem''. In particular, given any $\cG$-algebra $\cG \circlearrowright A$, we have a natural isomorphism $K_0(\cD_c(A)^\cG)\simeq K_0(A \rtimes \cG)$.
\end{remark}
\subsection{Enhanced additive invariants}
Given an additive invariant $E$, the associated {\em $\cG$-enhanced additive invariant} is defined as follows
\begin{eqnarray*}
E^\circlearrowright\colon \dgcat^\cG(k) \too \mathrm{D}^\cG && \cG \circlearrowright \cA \mapsto (E(\cA),E(\phi_\sigma))\,,
\end{eqnarray*}
where $\mathrm{D}^\cG$ stands for the category of $\cG$-equivariant objects in $\mathrm{D}$ (with respect to the trivial $\cG$-action); since $E$ sends Morita equivalences to isomorphisms, $E^\circlearrowright$ is well-defined. When $E$ is symmetric monoidal, $E^\circlearrowright$ is also symmetric monoidal. 
\begin{proposition}\label{prop:factorization-enhaced}
Given a $\cG$-enhanced additive invariant $E^\circlearrowright$, there exists an additive functor $\overline{E^\circlearrowright}\colon \Hmo_0^\cG(k) \to \mathrm{D}^\cG$ such that $\overline{E^\circlearrowright}\circ U^\cG \simeq E^\circlearrowright$.
%
\end{proposition}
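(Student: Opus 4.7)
The plan is to construct an auxiliary additive functor $\Psi\colon \Hmo_0^\cG(k)\to \Hmo_0(k)^\cG$ such that $\Psi \circ U^\cG$ coincides with the tautological functor $\cG \circlearrowright \cA \mapsto (U(\cA),U(\phi_\sigma))$, and then postcompose with the pointwise extension to $\mathrm{D}^\cG$ of the universal additive factorization $\overline{E}\colon \Hmo_0(k)\to \mathrm{D}$ of $E$ provided by \eqref{eq:equivalence-categories}.

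First, I would define $\Psi$ on objects by $\Psi(U^\cG(\cG \circlearrowright \cA)):=(U(\cA),U(\phi_\sigma))$. The natural isomorphisms of dg functors $\epsilon_{\rho,\sigma}\colon \phi_\rho \circ \phi_\sigma \Rightarrow \phi_{\rho\sigma}$ induce, via \eqref{eq:bimodule2}, isomorphisms of the associated bimodules and hence equalities $U(\phi_\rho)\circ U(\phi_\sigma)=U(\phi_{\rho\sigma})$ in $\Hmo_0(k)$; this makes $(U(\cA),U(\phi_\sigma))$ into a genuine $\cG$-equivariant object of $\Hmo_0(k)$.

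Next, on morphisms, I would send a class $[(\mathrm{B},\theta_\sigma)] \in K_0\rep(\cA,\cB)^\cG$ to its underlying class $[\mathrm{B}] \in K_0\rep(\cA,\cB)$. The key verification is that $[\mathrm{B}]$ commutes with the actions, i.e.,  $[\mathrm{B}]\circ U(\phi_\sigma) = U(\phi_\sigma)\circ [\mathrm{B}]$ in $\Hmo_0(k)$ for every $\sigma \in \cG$. Unwinding the $\cG$-action on $\rep_\dg(\cA,\cB)$ from Remark \ref{rk:dgbimodules} together with the definition \eqref{eq:bimodule2}, the equivariance isomorphism $\theta_\sigma\colon \mathrm{B}\Rightarrow \phi_\sigma(\mathrm{B})$ translates, via the co-Yoneda lemma, into an isomorphism ${}_{\phi_\sigma}\mathrm{B}\otimes_\cA \mathrm{B}\simeq \mathrm{B}\otimes_\cB {}_{\phi_\sigma}\mathrm{B}$ of objects of $\rep(\cA,\cB)$, which is exactly the required commutation after passage to Grothendieck classes. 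Functoriality of $\Psi$ is then clear because composition in $\Hmo_0^\cG(k)$ is induced by \eqref{eq:bifunctor1}, which reduces, after forgetting equivariance, to \eqref{eq:bifunctor}; additivity is immediate.

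Finally, by applying the pointwise construction $(-)^\cG$ to $\overline{E}$, I obtain an additive functor $\overline{E}^\cG\colon \Hmo_0(k)^\cG \to \mathrm{D}^\cG$, $(X,\phi_\sigma)\mapsto (\overline{E}(X),\overline{E}(\phi_\sigma))$. Setting $\overline{E^\circlearrowright}:=\overline{E}^\cG \circ \Psi$ and using $\overline{E}\circ U\simeq E$ yields the desired factorization $\overline{E^\circlearrowright}\circ U^\cG\simeq E^\circlearrowright$. The main obstacle will be the bimodule computation in the morphism step above, but it runs parallel in spirit to (and is somewhat simpler than) the analogous construction of \eqref{eq:induced-additive} carried out in the proof of Proposition \ref{prop:factorization}.
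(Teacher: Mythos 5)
Your proof is correct and follows essentially the same route as the paper's: both define $\overline{E^\circlearrowright}$ on objects by $U^\cG(\cG \circlearrowright \cA) \mapsto (E(\cA),E(\phi_\sigma))$ and on morphisms by forgetting the equivariant structure on $K_0\rep(\cA,\cB)^\cG$ and then applying $\overline{E}$. Your only addition is to factor this through the intermediate category $\Hmo_0(k)^\cG$ via the auxiliary functor $\Psi$; the bimodule isomorphism ${}_{\phi_\sigma}\mathrm{B}\otimes_\cA\mathrm{B} \simeq \mathrm{B}\otimes_\cB {}_{\phi_\sigma}\mathrm{B}$ you verify is exactly the content implicit in the paper's assertion that the composite homomorphism lands in the subgroup $\Hom_{\mathrm{D}^\cG}$, so you have simply made explicit a step the paper leaves to the reader.
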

\begin{proof}
Given small $\cG$-dg categories $\cG \circlearrowright \cA$ and $\cG \circlearrowright\cB$, the composition
\begin{equation}\label{eq:assignment-2}
K_0\rep(\cA,\cB)^\cG \too K_0 \rep(\cA,\cB) \too \Hom_{\mathrm{D}}(E(\cA), E(\cB))\,,
\end{equation}
where the first homomorphism is induced by the restriction functor and the second homomorphism by the additive functor $\overline{E}$, takes values in the abelian subgroup $ \Hom_{\mathrm{D}^\cG}((E(\cA), E(\phi_\sigma)),(E(\cB), E(\phi_\sigma)))$.
Therefore, $\overline{E^\circlearrowright}$ is defined by the assignments $U^\cG(\cG \circlearrowright \cA) \mapsto (E(\cA),E(\phi_\sigma))$ and \eqref{eq:assignment-2}.
\end{proof}
\begin{remark}[Equivariance plus enhacement]
As explained in Remark \ref{rk:character-action}, the functor \eqref{eq:equiv-functor} admits the ``lifting'' \eqref{eq:lifting}. Therefore, given an additive invariant $E$, we can also consider the composition
\begin{equation}\label{eq:comp-new}
\dgcat^\cG(k) \stackrel{\eqref{eq:lifting}}{\too} \dgcat^{\cG^\vee}(k) \stackrel{E^\circlearrowright}{\too} \mathrm{D}^{\cG^\vee}\,.
\end{equation}
Note that by composing \eqref{eq:comp-new} with the forgetful functor from $\mathrm{D}^{\cG^\vee}$ to $\mathrm{D}$, we recover the $\cG$-equivariant additive invariant \eqref{eq:composed-G}. Intuitively speaking, the group $\widehat{\cG}$ of characters acts on every $\cG$-equivariant additive invariant.
\end{remark}
\section{Relation with Panin's motivic category}\label{sec:Panin}
Let $\mathrm{H}$ be an algebraic group scheme over $k$. Recall from Panin \cite[\S6]{Panin}, and from Merkurjev \cite[\S2.3]{Merkurjev}, the construction of the motivic category\footnote{Panin, resp. Merkurjev, denoted this motivic category by $\bbA^{\mathrm{H}}$, resp. $\cC(\mathrm{H})$.} $\cC^{\mathrm{H}}(k)$. The objects are the pairs $(X,A)$, where $X$ is a smooth projective $\mathrm{H}$-scheme and $A$ is a separable algebra, and the morphisms are given by the Grothendieck groups
$$ \Hom_{\cC^\cH(k)}((X,A),(Y,B)):=K_0\mathrm{Vect}^{\mathrm{H}}(X \times Y, A^\op \otimes B)\,,$$
where $\mathrm{Vect}^{\mathrm{H}}(X\times Y, A^\op \otimes B)$ stands for the exact category of those $\mathrm{H}$-equivariant right $(\cO_{X \times Y} \otimes (A^\op \otimes B))$-modules which are locally free and of finite rank as $\cO_{X \times Y}$-modules. Given $[\cF] \in K_0\mathrm{Vect}^{\mathrm{H}}(X\times Y, A^\op \otimes B)$ and $[\mathcal{G}] \in K_0\mathrm{Vect}^{\mathrm{H}}(Y \times Z, B^\op \otimes C)$, their composition is defined by the formula
$$(\pi_{XZ})_\ast (\pi^\ast_{XY}([\cF])\otimes_B \pi^\ast_{YZ}([\mathcal{G}])) \in K_0\mathrm{Vect}^{\mathrm{H}}(X \times Z, A^\op \otimes C)\,,$$
where $\pi_{ST}$ stands for the projection of $X \times Y \times Z$ into $S \times T$. The category $\cC^{\mathrm{H}}(k)$ carries a symmetric monoidal structure induced by $(X,A) \otimes (Y, B) := (X \times Y, A\otimes B)$. Moreover, it comes equipped with two symmetric monoidal functors
\begin{eqnarray}
\SmProj^{\mathrm{H}}(k)^\op \too\cC^{\mathrm{H}}(k) && X\mapsto (X, k)\label{eq:Panin-1} \\
\Sep(k) \too \cC^{\mathrm{H}}(k) && A \mapsto (\mathrm{Spec}(k),A) \label{eq:Panin-2}
\end{eqnarray}
defined on the category of smooth projective $\mathrm{H}$-schemes and separable algebras, respectively. Let us denote by $\dgcat_{\mathrm{sp}}^\cG(k) \subset \dgcat^\cG(k)$ the full subcategory of those small $\cG$-dg categories $\cG \circlearrowright \cA$ with $\cA$ smooth proper.
%
\begin{theorem}\label{thm:bridge-Panin}
When $\mathrm{H}=\cG$ is a (constant) finite algebraic group scheme, there exists an additive, fully faithful, symmetric monoidal functor $\Psi\colon\cC^\cG(k) \to \NChow^\cG(k)$ making the following diagrams commute:
\begin{equation}\label{eq:2diagrams}
\xymatrix{
\SmProj^\cG(k)^\op \ar[d]_-{\eqref{eq:Panin-1}} \ar[rr]^-{X \mapsto \cG \circlearrowright\perf_\dg(X)} && \dgcat_{\mathrm{sp}}^\cG(k) \ar[d]^-{U^\cG} & \mathrm{Sep}(k) \ar[d]_-{\eqref{eq:Panin-2}} \ar[r]^-{A \mapsto \cG \circlearrowright_1 A} & \dgcat_{\mathrm{sp}}^\cG(k) \ar[d]^-{U^\cG} \\
\cC^\cG(k) \ar[rr]_-{\Psi} && \NChow^\cG(k) &
\cC^\cG(k) \ar[r]_-{\Psi}&  \NChow^\cG(k)\,.
}
\end{equation}
\end{theorem}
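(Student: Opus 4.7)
The plan is to define $\Psi$ on objects by $(X,A)\mapsto U^\cG(\cG\circlearrowright(\perf_\dg(X)\otimes_k A))$, where $\perf_\dg(X)$ carries the pull-back $\cG$-action of Example~\ref{ex:G-schemes} and $A$ is equipped with the trivial $\cG$-action. Since $X$ is smooth projective and $A$ is separable (hence smooth proper as a $k$-algebra because $\mathrm{char}(k)\nmid |\cG|$ is irrelevant here, only finite-dimensionality and separability matter), the tensor product $\perf_\dg(X)\otimes_k A$ is smooth proper by \S\ref{sub:smooth}, so the assignment lands in $\NChow^\cG(k)$. Functoriality and monoidality on objects rely on the canonical Morita equivalence $\perf_\dg(X\times Y)\simeq \perf_\dg(X)\otimes \perf_\dg(Y)$ (valid for smooth projective schemes) and on $(A\otimes B)$-associativity, so the assignment $(X,A)\otimes(Y,B)=(X\times Y,A\otimes B)$ on the left-hand side matches $U^\cG(\cG\circlearrowright(\perf_\dg(X)\otimes A))\otimes U^\cG(\cG\circlearrowright(\perf_\dg(Y)\otimes B))$ on the right.

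On morphisms, the key identification is
\[
\rep(\perf_\dg(X)\otimes A,\,\perf_\dg(Y)\otimes B)\;\simeq\;\cD_c\bigl(\perf_\dg(X\times Y)\otimes A^\op\otimes B\bigr)\;\simeq\;\perf(X\times Y,\, A^\op\otimes B),
\]
coming from $\cA^\op\otimes\cB\simeq \perf_\dg(X)^\op\otimes A^\op\otimes \perf_\dg(Y)\otimes B$ together with the smooth proper hypothesis (which forces all dg $\cA$-$\cB$-bimodules in $\rep(\cA,\cB)$ to coincide with the compact derived category). Because all constructions involved are $\cG$-equivariantly natural (Remarks \ref{rk:opposite}-\ref{rk:dgbimodules}), passing to $\cG$-equivariant objects gives
\[
\rep(\perf_\dg(X)\otimes A,\,\perf_\dg(Y)\otimes B)^\cG\;\simeq\;\perf^\cG(X\times Y,\, A^\op\otimes B).
\]
On the other hand, Panin's hom group is the $K_0$ of the exact category $\mathrm{Vect}^\cG(X\times Y, A^\op\otimes B)$ of $\cG$-equivariant locally free $(A^\op\otimes B)$-twisted sheaves. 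Since $X\times Y$ is smooth projective, Thomason's resolution theorem \cite{Thomason} (applied equivariantly, using the resolution property inherited from an ample family of line $\cG$-bundles) identifies this $K_0$ with $K_0$ of the full triangulated category $\perf^\cG(X\times Y, A^\op\otimes B)$. Concatenating the two identifications yields
\[
\Hom_{\cC^\cG(k)}((X,A),(Y,B))\;\cong\;\Hom_{\NChow^\cG(k)}\bigl(\Psi(X,A),\Psi(Y,B)\bigr),
\]
which will define $\Psi$ on morphisms and will prove it to be fully faithful.

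The remaining task is to verify that the two composition laws match. Panin's composition $[\cF]\circ[\mathcal{G}]=(\pi_{XZ})_\ast(\pi^\ast_{XY}[\cF]\otimes_B\pi^\ast_{YZ}[\mathcal{G}])$ is a Fourier--Mukai convolution. Under the dictionary above this is exactly the bimodule composition $\mathrm{B}\otimes_\cB\mathrm{B}'$ that defines morphisms in $\Hmo_0^\cG(k)$ (see \eqref{eq:bifunctor1}): the pull-backs correspond to extension of scalars along $\perf_\dg(Y)\otimes B\to\perf_\dg(X\times Y\times Z)\otimes(\cdots)$, the tensor product over $B$ corresponds to $\otimes_{\perf_\dg(Y)\otimes B}$, and the pushforward $(\pi_{XZ})_\ast$ corresponds to restriction of scalars. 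Checking this comparison on vector bundles (where both sides are transparent) and then invoking the fact that both $\Psi$ on hom groups and Panin's composition are defined via $K_0$ of $\cG$-equivariant vector bundles reduces everything to a standard base-change/projection-formula verification. Monoidality of $\Psi$ on morphisms follows similarly from the external product of bimodules \eqref{eq:pairing-monoidal}. Finally, commutativity of the two diagrams \eqref{eq:2diagrams} is built into the definition: $\Psi(X,k)=U^\cG(\cG\circlearrowright\perf_\dg(X))$ and $\Psi(\mathrm{Spec}(k),A)=U^\cG(\cG\circlearrowright_1 A)$.

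The main obstacle is the identification of hom groups, more precisely the step that replaces equivariant locally free sheaves by equivariant perfect complexes (and hence by equivariant bimodules in $\rep(-,-)^\cG$). For ordinary (non-equivariant) noncommutative motives this is classical, but here one must run the argument consistently through the $\cG$-equivariant structures on $\cC_\dg(\cA^\op\otimes\cB)$ described in Remark~\ref{rk:dgbimodules} and through Elagin's theorem \cite[Thm.~8.7]{Elagin} that $\rep(\cA,\cB)^\cG$ is again triangulated; getting the two $\cG$-actions to agree on the nose (and thereby getting a genuine equivalence of exact/triangulated categories whose $K_0$'s match) is the delicate point, and will require care with the $\cG$-equivariant Yoneda embedding and with the resolution property for $X\times Y$ used to pass from $\mathrm{Vect}^\cG$ to $\perf^\cG$.
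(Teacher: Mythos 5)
Your proposal is correct and follows essentially the same route as the paper: you define $\Psi$ by $(X,A)\mapsto U^\cG(\cG\circlearrowright(\perf_\dg(X)\otimes_k A))$, which is $\cG$-equivariantly Morita equivalent to the paper's $\perf_\dg(X,A)$, then identify hom groups via the $\cG$-equivariant Fourier--Mukai dictionary together with the resolution property of $X\times Y$ (ample family of line $\cG$-bundles / Thomason) to pass from $K_0\mathrm{Vect}^\cG$ to $K_0\perf^\cG$. The paper relies more heavily on citing the non-equivariant analogue \cite[Thm.~6.10]{Homogeneous} for the Fourier--Mukai equivalence rather than re-deriving it through $\cD_c(\cA^\op\otimes\cB)$ and opposite/tensor-product Morita equivalences, but the underlying strategy matches yours step for step.
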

\begin{proof}
Given a smooth projective $\cG$-scheme $X$ and a separable algebra $A$, let us write $\mathrm{Mod}(X,A)$ for the Grothendieck category of right $(\cO_X \otimes A)$-modules, $\cD(X,A)$ for the derived category $\cD(\mathrm{Mod}(X,A))$, and $\cD_\dg(X,A)$ for the dg category $\cD_\dg(\cE)$ with $\cE:=\mathrm{Mod}(X,A)$. In the same vein, let us write $\perf(X,A)$, resp. $\perf_\dg(X,A)$, for the full triangulated subcategory, resp. full dg subcategory, of those complexes of right $(\cO_X \otimes A)$-modules which are perfect as complexes of $\cO_X$-modules. As proved in \cite[Lem.~6.4]{Homogeneous}, the dg category $\perf_\dg(X,A)$ is smooth proper.

Let $X$ and $Y$ be smooth projective $\cG$-schemes and $A$ and $B$ separable algebras. Consider the inclusion functor 
\begin{equation}\label{eq:inclusion}
\mathrm{Vect}(X\times Y, A^\op \otimes B) \too \perf(X\times Y, A^\op \otimes B)
\end{equation}
as well as the functor
\begin{equation}\label{eq:F-Fourier}
\perf(X\times Y, A^\op \otimes B) \too \rep(\perf_\dg(X,A), \perf_\dg(Y,B)) \quad \cF \mapsto {}_{\Phi_\cF}\mathrm{B} \,,
\end{equation}
where $\Phi_\cF$ stands for the Fourier-Mukai dg functor
\begin{eqnarray*}
\perf_\dg(X,A) \too \perf_\dg(Y,B) && \mathcal{G} \mapsto (\pi_Y)_\ast(\pi_X^\ast(\mathcal{G}) \otimes_A \cF)\,.
\end{eqnarray*}
Both functors \eqref{eq:inclusion}-\eqref{eq:F-Fourier} are $\cG$-equivariant. Consequently, making use of the identification $\perf^\cG(X\times Y, A^\op \otimes B)\simeq \perf(X\times Y, A^\op \otimes B)^\cG$ (see Example \ref{ex:G-equivariant}), we obtain induced group homomorphisms
\begin{equation}\label{eq:induced-1}
K_0\mathrm{Vect}^\cG(X\times Y, A^\op \otimes B) \too K_0\perf^\cG(X\times Y, A^\op \otimes B)
\end{equation} 
\begin{equation}\label{eq:induced-2}
K_0\perf^\cG(X\times Y, A^\op \otimes B) \too K_0\rep(\perf_\dg(X,A), \perf_\dg(Y,B))^\cG\,.
\end{equation}
Similarly to \cite[Thm.~6.10]{Homogeneous}, the assignments $(X,A) \mapsto U^\cG(\cG \circlearrowright \perf_\dg(X,A))$, combined with the group homomorphisms $\eqref{eq:induced-2}\circ \eqref{eq:induced-1}$, gives rise to an additive symmetric monoidal functor $\Psi\colon \cC^\cG(k) \to \NChow^\cG(k)$. As explained at \cite[page~30]{Homogeneous}, the functor \eqref{eq:F-Fourier} is an equivalence. This implies that \eqref{eq:induced-2} is invertible. Since $X\times Y$ admits an ample family of line $\cG$-bundles, the homomorphism \eqref{eq:induced-1} is also invertible. We hence conclude that the functor $\Psi$ is moreover fully faithful. Finally, the commutativity of the diagrams \eqref{eq:2diagrams} follows from the identifications $\perf_\dg(X,k)=\perf_\dg(X)$ and $\perf_\dg(\mathrm{Spec}(k),A)=\cC_{c, \dg}(A)$ and from the fact that the Yoneda dg functor $A \to \cD_{c, \dg}(A)$ is a $\cG$-equivariant Morita equivalence.
\end{proof}
\begin{corollary}
Given $X, Y \in \SmProj^\cG(k)$, we have a group isomorphism
$$ \Hom_{\NChow^\cG(k)}(U^\cG(\cG \circlearrowright \perf_\dg(X)), U^\cG(\cG \circlearrowright \perf_\dg(Y)))\simeq K_0^\cG(X\times Y)\,.$$
\end{corollary}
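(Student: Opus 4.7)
The plan is to deduce this corollary directly from Theorem \ref{thm:bridge-Panin} by unwinding the definition of the morphism groups in Panin's motivic category $\cC^\cG(k)$.

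First, I would invoke the fully faithfulness of the symmetric monoidal functor $\Psi\colon \cC^\cG(k) \to \NChow^\cG(k)$ established in Theorem \ref{thm:bridge-Panin}. Combined with the commutativity of the left-hand diagram in \eqref{eq:2diagrams}, this gives
\[
\Hom_{\NChow^\cG(k)}(U^\cG(\cG \circlearrowright \perf_\dg(X)), U^\cG(\cG \circlearrowright \perf_\dg(Y))) \simeq \Hom_{\cC^\cG(k)}((X,k),(Y,k)).
\]
By the very definition of $\cC^\cG(k)$ recalled at the start of \S\ref{sec:Panin}, the right-hand side equals $K_0\mathrm{Vect}^\cG(X\times Y, k^\op \otimes k) = K_0\mathrm{Vect}^\cG(X\times Y)$.

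Second, I would identify this latter Grothendieck group with the $\cG$-equivariant $K$-theory $K_0^\cG(X\times Y)$ of the smooth projective $\cG$-scheme $X\times Y$. Since $X \times Y$ admits an ample family of line $\cG$-bundles (hence satisfies the resolution property), the inclusion of $\cG$-equivariant locally free $\cO_{X\times Y}$-modules of finite rank into $\cG$-equivariant perfect complexes induces an isomorphism on $K_0$; this is the same observation already used in the proof of Theorem \ref{thm:bridge-Panin} to see that \eqref{eq:induced-1} is invertible. Thomason's definition of $K^\cG(X\times Y)$ then yields $K_0\mathrm{Vect}^\cG(X\times Y) \simeq K_0^\cG(X\times Y)$.

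Concatenating these two isomorphisms produces the desired identification. There is no real obstacle here: the corollary is essentially a repackaging of the fully faithfulness of $\Psi$ together with the resolution property for smooth projective $\cG$-schemes, both of which were already established (or used) in the proof of Theorem \ref{thm:bridge-Panin}.
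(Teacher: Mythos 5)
Your proof is correct and follows essentially the same path as the paper: invoke the fully faithfulness of $\Psi$ (Theorem \ref{thm:bridge-Panin}) together with the commutativity of the left-hand square to identify the Hom-group with $K_0\mathrm{Vect}^\cG(X\times Y)$, and then appeal to Thomason's definition $K_0^\cG(X\times Y):=K_0\mathrm{Vect}^\cG(X\times Y)$. The detour through perfect complexes and the resolution property in your second step is harmless but unnecessary, since the paper simply uses Thomason's definition directly.
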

\begin{proof}
Combine Thomason's definition $K_0^\cG(X\times Y):= K_0\mathrm{Vect}^{\cG}(X\times Y)$ of the $\cG$-equivariant Grothendieck group of $X \times Y$ with Theorem \ref{thm:bridge-Panin}.
\end{proof}
\subsection{Twisted projective homogeneous varieties}
Let $\mathrm{H}$ be a split semi-simple algebraic group scheme over $k$, $P \subset \mathrm{H}$ a parabolic subgroup, and $\gamma\colon \mathrm{Gal}(k_{\mathrm{sep}}/k) \to \cG(k_{\mathrm{sep}})$ a $1$-cocycle. Out of this data, we can construct the projective homogeneous $\mathrm{H}$-variety $\mathrm{H}/P$ as well as its twisted form ${}_\gamma \mathrm{H}/P$. Let $\widetilde{\mathrm{H}}$ and $\widetilde{P}$ be the universal covers of $\mathrm{H}$ and $P$, $R(\widetilde{\mathrm{H}})$ and $R(\widetilde{P})$ the associated representation rings, $n$ the index $[W(\widetilde{\mathrm{H}}):W(\widetilde{P})]$ of the Weyl groups, $\widetilde{Z}$ the center of $\widetilde{\mathrm{H}}$, and $\mathrm{Ch}:=\Hom(\widetilde{Z},\bbG_m)$ the character group. Under these notations, Panin proved in \cite[Thm.~4.2]{Panin} that every $\mathrm{Ch}$-homogeneous basis $\rho_1, \ldots, \rho_n$ of $R(\widetilde{P})$ over $R(\widetilde{\mathrm{H}})$ gives rise to an isomorphism
\begin{equation}\label{eq:Panin}
({}_\gamma \mathrm{H}/P, k)\simeq \oplus_{i=1}^n (\mathrm{Spec}(k), A_i)
\end{equation}
in $\cC^{\mathrm{H}}(k)$, where $A_i$ stands for the Tits' central simple algebra associated to $\rho_i$.
\begin{theorem}\label{thm:Panin}
Let $\mathrm{H}$, $P$, $\gamma$ be as above, and $\cG_k$ the (constant) algebraic group scheme associated to $\cG$. For every homomorphism $\cG_k \to \mathrm{H}$ and $\cG$-equivariant additive invariant $E^\cG$, we have an induced isomorphism 
\begin{equation}\label{eq:Panin1}
E^\cG(\cG \circlearrowright \perf_\dg({}_\gamma \mathrm{H}/P)) \simeq \oplus_{i=1}^n E(A_i[\cG])\,,
\end{equation}
where ${}_\gamma \mathrm{H}/P$ is considered as a $\cG$-scheme.
\end{theorem}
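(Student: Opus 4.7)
The plan is to transport Panin's isomorphism \eqref{eq:Panin} into $\NChow^\cG(k)$ via Theorem \ref{thm:bridge-Panin}, and then to apply the universal factorization of Proposition \ref{prop:factorization}. First, I would use the homomorphism $\cG_k \to \mathrm{H}$ to produce a symmetric monoidal restriction functor $\Res\colon \cC^\mathrm{H}(k) \to \cC^\cG(k)$: it preserves each pair $(X,A)$, with $X$ now regarded as a $\cG$-scheme, and it reinterprets every $\mathrm{H}$-equivariant locally free sheaf on $X \times Y$ as a $\cG$-equivariant one via pullback along $\cG_k \to \mathrm{H}$. Applying $\Res$ to Panin's isomorphism \eqref{eq:Panin} yields an isomorphism $({}_\gamma \mathrm{H}/P, k) \simeq \bigoplus_{i=1}^n (\mathrm{Spec}(k), A_i)$ in $\cC^\cG(k)$.

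Next, applying the additive symmetric monoidal functor $\Psi\colon \cC^\cG(k) \to \NChow^\cG(k)$ of Theorem \ref{thm:bridge-Panin}, together with the two commutative diagrams \eqref{eq:2diagrams} and the fact that the Yoneda dg functor $A_i \to \cC_{c,\dg}(A_i)$ is a $\cG$-equivariant Morita equivalence, produces an isomorphism
$$U^\cG(\cG \circlearrowright \perf_\dg({}_\gamma \mathrm{H}/P)) \simeq \bigoplus_{i=1}^n U^\cG(\cG \circlearrowright_1 \cC_{c,\dg}(A_i))$$
in $\NChow^\cG(k) \subseteq \Hmo_0^\cG(k)$.

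Finally, Proposition \ref{prop:factorization} provides an additive functor $\overline{E^\cG}\colon \Hmo_0^\cG(k) \to \mathrm{D}$ with $\overline{E^\cG}\circ U^\cG \simeq E^\cG$. Applying it to the preceding isomorphism, and then invoking Example \ref{ex:twisted-new}(iii) specialized to the trivial $\cG$-action on $A_i$ (for which $A_i \rtimes \cG$ coincides with the group algebra $A_i[\cG]$), identifies each summand $E^\cG(\cG \circlearrowright_1 \cC_{c,\dg}(A_i))$ with $E(A_i[\cG])$, completing the proof of \eqref{eq:Panin1}. The only real friction is in verifying that $\Res$ is a well-defined symmetric monoidal functor: one must check that $\mathrm{H}$-equivariant structures restrict along $\cG_k \to \mathrm{H}$ compatibly with the pullbacks, tensor products and pushforwards used to define the composition law in $\cC^\mathrm{H}(k)$. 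This is routine, but is the one place where genuine verification is needed.
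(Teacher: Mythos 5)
Your proof is correct and follows essentially the same route as the paper: transport Panin's isomorphism \eqref{eq:Panin} to $\cC^\cG(k)$ via $\cG_k \to \mathrm{H}$, push it into $\NChow^\cG(k)$ through $\Psi$ from Theorem \ref{thm:bridge-Panin} (using Lemma \ref{lem:inverts} and the $\cG$-equivariant Morita equivalence $A_i \to \cC_{c,\dg}(A_i)$), then apply Proposition \ref{prop:factorization} and Example \ref{ex:twisted-new}(iii). The only difference is cosmetic: you spell out the restriction functor $\cC^{\mathrm{H}}(k)\to\cC^\cG(k)$ and flag the (routine) compatibility check, whereas the paper compresses this into the phrase ``Panin's computation holds also in $\cC^\cG(k)$.''
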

\begin{proof}
Via $\cG_k \to \mathrm{H}$, Panin's computation \eqref{eq:Panin} holds also in the motivic category $\cC^\cG(k)$. Making use of Theorem \ref{thm:bridge-Panin} and Lemma \ref{lem:inverts}, we conclude that 
$$U^\cG(\cG \circlearrowright \perf_\dg({}_\gamma \mathrm{H}/P))\simeq \oplus_{i=1}^n U^\cG(\cG \circlearrowright_1 A_i) \simeq \oplus_{i=1}^n U^\cG(\cG \circlearrowright_1 \cC_{c,\dg}(A_i))\,.$$ 
The proof follows then from Proposition \ref{prop:factorization} and Example \ref{ex:twisted-new}(iii).
\end{proof}
\begin{remark}[$\cG$-equivariant Hochschild homology]
When $E^\cG$ is $\cG$-equivariant Hochschild homology $HH^\cG$, the right-hand side of \eqref{eq:Panin1} reduces to
\begin{equation}\label{eq:isos}
\oplus_{i=1}^n HH(A_i[\cG]) \stackrel{(a)}{\simeq} \oplus_{i=1}^n HH(k[\cG])\otimes HH_0(A_i) \stackrel{(b)}{\simeq} \oplus^n_{i=1} HH(k[\cG])\,,
\end{equation}
where (a) follows from \cite[Cor.~1.2.14]{Loday} and (b) from the fact that $HH_0(A)\simeq k$ for every central simple $k$-algebra $A$. In the particular case where $k$ is algebraically closed, \eqref{eq:isos} reduces moreover to $\oplus^n_{i=1} HH(k)^{\oplus |\langle \cG\rangle|}$; see Remark \ref{rk:additivetwisted}(iii).
\end{remark}
\subsection{Quasi-split case}
When the algebraic group scheme $\mathrm{H}$ is a {\em quasi}-split, Panin proved in \cite[Thm.~12.4]{Panin} that a computation similar to \eqref{eq:Panin} also holds. In this generality, the algebras $A_i$ are no longer central simple but only separable. The analogue of Theorem \ref{thm:Panin} (with the same proof) holds similarly. Moreover, when $E^\cG:=HH^\cG$, the right-hand side of \eqref{eq:Panin1} reduces to $ \oplus^n _{i=1} HH(k[\cG])\otimes A_i/[A_i,A_i]$.
\section{Relation with equivariant motives}\label{sec:relationChow}
\subsection{Equivariant motives}
Given a smooth projective $\cG$-scheme $X$ and an integer $i \in \bbZ$, let us write $\mathrm{CH}_\cG^i(X)_\bbQ$ for the $i^{\mathrm{th}}$-codimensional $\cG$-equivariant Chow group of $X$ in the sense of Edidin-Graham \cite{EG}. Since the group $\cG$ is finite, we have $\mathrm{CH}_\cG^i(X)_\bbQ=0$ whenever $i \notin \{0, \ldots, \mathrm{dim}(X)\}$; see \cite[Prop.~5.2]{Edidin-RR}. 

Let $X$ and $Y$ be smooth projective $\cG$-schemes, $X=\amalg_j X_j$ the decomposition of $X$ into its connected components, and $r$ an integer. The $\bbQ$-vector space $\mathrm{Corr}_\cG^r(X,Y):= \oplus_j \mathrm{CH}_\cG^{\mathrm{dim}(X_j)+r}(X_j \times Y)_\bbQ$ is called the space of {\em $\cG$-equivariant correspondences of degree $r$ from $X$ to $Y$}. Given $\cG$-equivariant correspondences $f \in \mathrm{Corr}_\cG^r(X,Y)$ and $g \in \mathrm{Corr}_\cG^s(Y,Z)$, their composition is defined by the formula
\begin{equation}\label{eq:correspondences}
(\pi_{XZ})_\ast(\pi^\ast_{XY}(f) \cdot \pi^\ast_{YZ}(g)) \in \mathrm{Corr}_\cG^{r+s}(X,Z)\,.
\end{equation}  
Recall from Laterveer \cite{Laterveer}, and from Iyer and M\"uller-Stack \cite{Iyer-Muller}, the construction of the category $\Chow^\cG(k)_\bbQ$ of {\em $\cG$-equivariant Chow motives with $\bbQ$-coefficients}. The objects are the triples $(X,p,m)$, where $X$ is a smooth projective $\cG$-scheme, $p^2=p \in \mathrm{Corr}_\cG^0(X,X)$ is an idempotent endomorphism, and $m$ is an integer. The $\bbQ$-vector spaces of morphisms are given by 
$$ \Hom_{\Chow^\cG(k)_\bbQ}((X,p,m),(Y,q,n)):= q \circ \mathrm{Corr}_\cG^{n-m}(X,Y) \circ p\,,$$
and the composition law is induced by the composition \eqref{eq:correspondences} of correspondences. By construction, the category $\Chow^\cG(k)_\bbQ$ is $\bbQ$-linear, additive, and idempotent complete. Moreover, it carries a symmetric monoidal structure induced by the formula $(X,p,m) \otimes (Y, q,n) := (X \times Y, p \otimes q, m +n)$. The $\cG$-equivariant Lefschetz motive $(\mathrm{Spec}(k), \id, -1)$ will be denoted by $\bbL$ and the $\cG$-equivariant Tate motive $(\mathrm{Spec}(k), \id, 1)$ by $\bbQ(1)$; in both cases $\cG$ acts trivially. Finally, the category $\Chow^\cG(k)_\bbQ$ comes equipped with the symmetric monoidal functor
\begin{eqnarray*}
 \mathfrak{h}^\cG(-)_\bbQ \colon \mathrm{SmProj}^\cG(k)^\op \too \Chow^\cG(k)_\bbQ && X \mapsto (X,\id, 0)\,.
\end{eqnarray*} 
%

The category $\Chow^\cG(k)_\bbQ$ is additive and rigid symmetric monoidal. Therefore, similarly to \S\ref{sub:NNmotives}, the category $\Num^\cG(k)_\bbQ$ of {\em $\cG$-equivariant numerical motives with $\bbQ$-coefficients} is defined as the idempotent completion of $\Chow^\cG(k)_\bbQ/ \cN$.
\subsection{Orbit categories}\label{sub:orbit}
Let $\cC$ be an additive symmetric monoidal category and $\cO\in \cC$ a $\otimes$-invertible object. The {\em orbit category} $\cC/_{\!-\otimes \cO}$ has the same objects as $\cC$ and abelian groups of morphisms  $\Hom_{\cC/_{\!-\otimes \cO}}(a,b):=\oplus_{i\in \bbZ} \Hom_\cC(a,b \otimes \cO^{\otimes i})$. Given objects $a$, $b$, and $c$, and morphisms
\begin{eqnarray*}
\mathrm{f}=\{f_i\}_{i \in \bbZ} \in \oplus_{i \in \bbZ} \Hom_\cC(a, b \otimes \cO^{\otimes i}) && \mathrm{g}=\{g_i\}_{i \in \bbZ} \in \oplus_{i \in \bbZ} \Hom_\cC(b, c \otimes \cO^{\otimes i})
\end{eqnarray*} 
the $i'^{\mathrm{th}}$-component of $\mathrm{g} \circ \mathrm{f}$ is defined as $\sum_i (g_{i'-i}\otimes \cO^{\otimes i})\circ f_i$. The functor
\begin{eqnarray*}
\pi\colon \cC \too \cC/_{\!-\otimes \cO} & a \mapsto a & f \mapsto \mathrm{f}=\{f_i\}_{i \in \bbZ}\,,
\end{eqnarray*}
where $f_0=f$ and $f_i=0$ if $i \neq 0$, is endowed with a natural isomorphism of functors $\pi \circ (-\otimes \cO) \Rightarrow \pi$ and is $2$-universal among all such functors; see \cite[\S7]{CvsNC}. The category $\cC/_{\!-\otimes \cO}$ is additive and, as proved in \cite[Lem.~7.3]{CvsNC}, it inherits from $\cC$ a symmetric monoidal structure making $\pi$ symmetric monoidal.
\subsection{Localization at the augmentation ideal}\label{sub:augmentation}
Let $I$ be the kernel of the rank homomorphism $R(\cG) \twoheadrightarrow \bbZ$ and $R(\cG)_{I}$ the localization of $R(\cG)$ at the ideal $I$. Recall from Corollary \ref{cor:R(G)-linear} that the category $\Hmo_0^\cG(k)$ is $R(\cG)$-linear. Let us denote by $\Hmo_0^\cG(k)_I$ the $R(\cG)_I$-linear additive category obtained from $\Hmo_0^\cG(k)$ by applying the functor $(-)_I:= - \otimes_{R(\cG)}R(\cG)_I$ to each $R(\cG)$-module of morphisms. By construction, $\Hmo_0^\cG(k)_I$ inherits from $\Hmo^\cG_0(k)$ a symmetric monoidal structure making the functor $(-)_I\colon \Hmo_0^\cG(k) \to \Hmo_0^\cG(k)_I$ symmetric monoidal. The category $\NChow^\cG(k)_I$ of {\em $I$-localized $\cG$-equivariant noncommutative Chow motives} is defined as the idempotent completion of the subcategory of $\Hmo_0^\cG(k)_I$ consists of the objects $U^\cG(\cG \circlearrowright \cA)_I$ with $\cA$ a smooth proper dg category. The category $\NNum^\cG(k)_I$ of {\em $I$-localized $\cG$-equivariant noncommutative numerical motives} is defined as the idempotent completion of the category obtained from $\NNum^\cG(k)$ by applying the functor $(-)_I$ to each $R(\cG)$-module of morphisms; see \S\ref{sub:NCnumerical}.
\begin{proposition}\label{prop:same}
Given any two cohomology classes $[\alpha], [\beta] \in H^2(\cG, k^\times)$, we have an isomorphism $U^\cG(\cG \circlearrowright_\alpha k)_I\simeq U^\cG(\cG \circlearrowright_\beta k)_I$ in $\NChow^\cG(k)_I$.
\end{proposition}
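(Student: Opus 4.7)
My plan is to prove the stronger statement that the restriction functor induces an equivalence $\NChow^\cG(k)_I \simeq \NChow(k)_\bbQ$ under which every $U^\cG(\cG \circlearrowright_\alpha k)_I$ corresponds to the $\otimes$-unit $U(k)_\bbQ$. The required isomorphism $U^\cG(\cG \circlearrowright_\alpha k)_I \simeq U^\cG(\cG \circlearrowright_\beta k)_I$ then follows since equivalences reflect isomorphisms.

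The crucial input is the algebraic identity $R(\cG)_I = \bbQ$. Since the regular representation $[k\cG]$ has rank $|\cG|$, we have $|\cG| \notin I$, so the element $e^+ := [k\cG]/|\cG|$ is well defined in $R(\cG)_I$. The classical isomorphism $V \otimes k\cG \simeq k\cG^{\oplus \dim V}$ of $\cG$-representations gives $[V] \cdot [k\cG] = \dim V \cdot [k\cG]$ in $R(\cG)$; in particular $[k\cG]^2 = |\cG|\, [k\cG]$, making $e^+$ an idempotent and $e^- := 1 - e^+$ an orthogonal idempotent. The local ring $R(\cG)_I$ has residue field $\mathrm{Frac}(R(\cG)/I) = \bbQ$, and $e^-$ maps to $0$ in it, so $e^-$ is an idempotent in the maximal ideal, hence vanishes. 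This gives $[k\cG] = |\cG|$ in $R(\cG)_I$, so $[V] = \dim V$ for every representation $V$; the image of $R(\cG) \to R(\cG)_I$ is $\bbZ$, and localizing $\bbZ$ at its zero prime yields $R(\cG)_I = \bbQ$.

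Combining this with Proposition \ref{prop:decomposition} applied to $R = \bbQ$ (which contains $1/|\cG|$), we obtain $\NChow^\cG(k)_\bbQ \simeq \NChow(k)_\bbQ \times \NChow^\cG(k)_\bbQ^-$ with restriction as projection onto the first factor. A routine manipulation identifies $\NChow^\cG(k)_I$ as $\NChow^\cG(k)_\bbQ$ further localized by tensoring $\mathrm{Hom}$-modules over $R(\cG)_\bbQ = \bbQ \times R(\cG)_\bbQ^{-}$ with $\bbQ$ via the augmentation, which annihilates the $\NChow^\cG(k)_\bbQ^-$-factor entirely. Hence $\NChow^\cG(k)_I \simeq \NChow(k)_\bbQ$ via restriction, and since restriction sends every $\cG \circlearrowright_\alpha k$ to the underlying dg category $k$, every $U^\cG(\cG \circlearrowright_\alpha k)_I$ corresponds to $U(k)_\bbQ$, completing the proof. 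The main obstacle is the identity $R(\cG)_I = \bbQ$; once this is in hand, the remaining steps are formal consequences of Proposition \ref{prop:decomposition} and the compatibility of the two localization procedures.
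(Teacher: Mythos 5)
Your computation $R(\cG)_I = \bbQ$ is correct and is a genuine insight (it explains, for instance, why the functor $(-)_{I}$ and $(-)_{\bbQ,I_\bbQ}$ produce the same category). However, the stronger statement you build around it — that restriction induces an equivalence $\NChow^\cG(k)_I \simeq \NChow(k)_\bbQ$ — is false, so the proposed proof has a real gap.

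Here is a counterexample. Take $\cG = C_2$ and the smooth proper dg category $\cB := k \amalg k$ with the $C_2$-action permuting the two summands. Since equivariant objects on a free $\cG$-set are objects on the quotient, $\cD_c(\cB)^\cG \simeq \cD_c(k)$, so
$$\Hom_{\Hmo_0^\cG(k)}\bigl(U^\cG(\cG \circlearrowright_1 k), U^\cG(\cG \circlearrowright \cB)\bigr) \simeq K_0\bigl(\cD_c(\cB)^\cG\bigr) \simeq \bbZ\,,$$
and one checks that $R(\cG)$ acts on this $\bbZ$ through the rank homomorphism, so after $(-)_I$ this Hom-group becomes $\bbQ$. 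On the other hand $\Hom_{\NChow(k)_\bbQ}(U(k)_\bbQ, U(\cB)_\bbQ) \simeq K_0(\cD_c(\cB))_\bbQ \simeq \bbQ^2$, and the restriction homomorphism $\bbQ \to \bbQ^2$ is the diagonal — not surjective. Hence restriction is not full after $I$-localization; in fact it already fails to be an isomorphism on the $e^+$-parts, which is exactly the step where your chain of reasoning (the ``routine manipulation'' after invoking Proposition~\ref{prop:decomposition}) breaks. Morally, $\NChow^\cG(k)_I$ still remembers $\cG$-invariants of Chow groups via the Edidin--Graham Riemann--Roch isomorphism, $\oplus_i(\mathrm{CH}^i(X)_\bbQ)^\cG$, and this is strictly smaller than $\oplus_i \mathrm{CH}^i(X)_\bbQ$ whenever $\cG$ acts nontrivially on cycles; this is also why Theorem~\ref{thm:bridge} has target $\NChow^\cG(k)_{\bbQ, I_\bbQ}$ rather than $\NChow(k)_\bbQ$.

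The paper's own proof is instead direct and does not attempt to collapse the whole category. Given $[\alpha],[\beta]$, the $\alpha\beta^{-1}$-twisted equivariant object $(\oplus_{\rho}\phi_\rho(k),\theta_\sigma) \in \cD_c(k)^{\cG,\alpha\beta^{-1}}$ and its $\beta\alpha^{-1}$-twisted counterpart define morphisms $f\colon U^\cG(\cG \circlearrowright_\alpha k) \to U^\cG(\cG \circlearrowright_\beta k)$ and $g$ back. One then checks that $g\circ f$ and $f\circ g$, as elements of $R(\cG)$ via Proposition~\ref{prop:endomorphisms}, have nonzero rank, hence are units in $R(\cG)_I$, so $f_I$ and $g_I$ are inverse isomorphisms. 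Your identity $R(\cG)_I=\bbQ$ is perfectly compatible with, and illuminates, this local argument; replacing it with a claimed global equivalence of categories is what overshoots.
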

\begin{proof}
By construction of $\NChow^\cG(k)$, we have group isomorphisms:
$$ \Hom_{\NChow^\cG(k)}(U^\cG(\cG \circlearrowright_\alpha k), U^\cG(\cG \circlearrowright_\beta k))\simeq K_0 (\cD_c(k)^{\cG, \alpha\beta^{-1}})$$
$$ \Hom_{\NChow^\cG(k)}(U^\cG(\cG \circlearrowright_\beta k), U^\cG(\cG \circlearrowright_\alpha k))\simeq K_0 (\cD_c(k)^{\cG, \beta\alpha^{-1}})\,.$$
Consider the $\alpha\beta^{-1}$-twisted $\cG$-equivariant object $k_{\alpha\beta^{-1}} \cG \in \cD_c(k)^{\cG, \alpha\beta^{-1}}$ defined as $(\oplus_{\rho \in \cG} \phi_\rho(k), \theta_\sigma)$, where $\phi_\rho(k)=k$ and $\theta_\sigma$ is given by the collection of units $(\alpha^{-1} \beta)(\sigma, \rho)\in k^\times$. Similarly, consider the $\beta\alpha^{-1}$-twisted $\cG$-equivariant object $k_{\beta\alpha^{-1}} \cG \in \cD_c(k)^{\cG, \beta\alpha^{-1}}$ defined as $(\oplus_{\rho \in \cG} \phi_\rho(k), \theta_\sigma)$, where $\theta_\sigma$ is given by $(\beta^{-1} \alpha)(\sigma, \rho)$. The associated Grothendieck classes correspond then to morphisms
\begin{eqnarray*}
U^\cG(\cG \circlearrowright_\alpha k) \stackrel{f}{\too} U^\cG(\cG \circlearrowright_\beta k) & \mathrm{and} &  U^\cG(\cG \circlearrowright_\beta k) \stackrel{g}{\too} U^\cG(\cG \circlearrowright_\alpha k) 
\end{eqnarray*}
in the category $\NChow^\cG(k)$. Since the rank of the elements $g \circ f, f \circ g \in R(\cG)$ is non-zero (see Proposition \ref{prop:endomorphisms}), we conclude from the definition of $\NChow^\cG(k)_I$ that the morphisms $f_I$ and $g_I$ are invertible. This achieves the proof.
\end{proof}
\begin{remark}[Groups of central type]
Note that the group algebra $k[\cG]$ is not simple; it contains the non-trivial augmentation ideal. In the case where $\cG$ is of central type, there exist cohomology classes $[\alpha] \in H^2(\cG,k^\times)$ for which the twisted group algebra $k_\alpha[\cG]$ is simple! For example, the group $\cG :=\mathrm{H} \times \widehat{\mathrm{H}}$ (with $\mathrm{H}$ abelian) is of central type and the twisted group algebra $k_\alpha[\cG]$ associated to the $2$-cocycle $\alpha((\sigma,\chi),(\rho,\psi)):=\chi(\rho)$ is simple. By combining Remark \ref{rk:additivetwisted} with Example \ref{ex:twisted-new}(iv) and with Proposition \ref{prop:factorization}, we conclude that $U^\cG(\cG \circlearrowright_1 k) \not\simeq U^\cG(\cG \circlearrowright_\alpha k)$ in $\NChow^\cG(k)$. This shows that Proposition \ref{prop:same} is false before $I$-localization.
\end{remark}
\subsection{Bridges}
The next result relates the categories of $\cG$-equivariant noncommutative motives with the categories of $\cG$-equivariant motives.
\begin{theorem}\label{thm:bridge}
There exist $\bbQ$-linear, fully-faithful, symmetric monoidal functors $\Phi$ and $\Phi_\cN$ making the following diagram commute:
\begin{equation}\label{eq:diagram-bridge}
\xymatrix@C=3em@R=2em{
\mathrm{SmProj}^\cG(k)^\op \ar[rrr]^-{X \mapsto \cG \circlearrowright \perf_\dg(X)} \ar[d]_-{\mathfrak{h}^\cG(-)_\bbQ} &&& \dgcat_{\mathrm{sp}}^\cG(k) \ar[d]^-{U^\cG(-)_\bbQ} \\
\Chow^\cG(k)_\bbQ \ar[d]_-\pi &&& \NChow^\cG(k)_\bbQ \ar[d]^-{(-)_{I_\bbQ}} \\
\Chow^\cG(k)_\bbQ/_{\!-\otimes \bbQ(1)} \ar[rrr]_-{\Phi} \ar[d] &&& \NChow^\cG(k)_{\bbQ, I_\bbQ} \ar[d] \\
\Num^\cG(k)_\bbQ/_{\!-\otimes \bbQ(1)} \ar[rrr]_{\Phi_\cN} &&& \NNum^\cG(k)_{\bbQ,I_\bbQ}\,.
}
\end{equation}
\end{theorem}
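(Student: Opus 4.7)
The strategy is to mimic the author's non-equivariant bridge $\Chow(k)_\bbQ/_{\!-\otimes \bbQ(1)}\hookrightarrow \NChow(k)_\bbQ$, but using $I_\bbQ$-localization as the ``noncommutative shadow'' of killing the Tate twist. On objects I would set
$$
\tilde{\Phi}\colon \Chow^\cG(k)_\bbQ \too \NChow^\cG(k)_{\bbQ, I_\bbQ}\ko (X,p,m)\mapsto \mathrm{Im}(\Phi_\cG(p))\,,
$$
where the underlying assignment on smooth projective $\cG$-schemes is $X\mapsto U^\cG(\cG\circlearrowright \perf_\dg(X))_{\bbQ, I_\bbQ}$, extended to idempotents by idempotent completeness. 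To factor $\tilde{\Phi}$ through the orbit category I must show $\tilde{\Phi}(-\otimes \bbQ(1))\simeq \tilde{\Phi}(-)$. Since $\perf_\dg(\bbP^1)$ admits the full exceptional collection $(\cO,\cO(1))$ of $\cG$-invariant objects (trivial action), $U^\cG(\cG\circlearrowright_1 \perf_\dg(\bbP^1))_{\bbQ,I_\bbQ}\simeq U^\cG(\cG\circlearrowright_1 k)_{\bbQ,I_\bbQ}^{\oplus 2}$; matching with the equivariant decomposition $\mathfrak{h}^\cG(\bbP^1)_\bbQ\simeq \mathbf{1}\oplus \bbL$ forces $\tilde{\Phi}(\bbL)\simeq \tilde{\Phi}(\mathbf{1})$, hence $\tilde{\Phi}(\bbQ(1))\simeq \tilde{\Phi}(\mathbf{1})$. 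The $2$-universal property of the orbit category recalled in \S\ref{sub:orbit} then produces $\Phi$.

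\textbf{Fully faithfulness.} For $X,Y\in \SmProj^\cG(k)$ I compute both sides. On the commutative side, by definition of the orbit category and the fact that $\CH^\ast_\cG(X\times Y)_\bbQ$ vanishes outside $\{0,\ldots,\mathrm{dim}(X\times Y)\}$ (Edidin \cite{EG}), one obtains
$$
\Hom_{\Chow^\cG(k)_\bbQ/_{\!-\otimes \bbQ(1)}}\!\bigl(\mathfrak{h}^\cG(X)_\bbQ,\mathfrak{h}^\cG(Y)_\bbQ\bigr)\;\simeq\; \bigoplus_j \mathrm{CH}^j_\cG(X\times Y)_\bbQ.
$$
On the noncommutative side, Theorem \ref{thm:bridge-Panin} and its corollary give
$$
\Hom_{\NChow^\cG(k)_{\bbQ,I_\bbQ}}\!\bigl(\tilde{\Phi}(\mathfrak{h}^\cG(X)_\bbQ),\tilde{\Phi}(\mathfrak{h}^\cG(Y)_\bbQ)\bigr)\;\simeq\; K_0^\cG(X\times Y)_{\bbQ, I_\bbQ}.
$$
The identification of these two groups is supplied by the equivariant Grothendieck--Riemann--Roch theorem of Edidin--Graham: when $\cG$ is finite acting on a smooth projective variety $Z$, the equivariant Chern character yields an isomorphism $K_0^\cG(Z)_{\bbQ, I_\bbQ}\isotoo \bigoplus_j \mathrm{CH}^j_\cG(Z)_\bbQ$. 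The work is then to check that this isomorphism, at the level of $Z=X\times Y$, coincides with the one induced by $\Phi$ via the Fourier--Mukai description of correspondences used in the proof of Theorem \ref{thm:bridge-Panin}; this is done exactly as in the non-equivariant setting, replacing Grothendieck--Riemann--Roch by its equivariant counterpart.

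\textbf{Monoidality and passage to numerical motives.} The $\bbQ$-linearity is immediate from the construction. Symmetric monoidality follows from the Morita equivalence $\perf_\dg(X\times Y)\simeq \perf_\dg(X)\otimes \perf_\dg(Y)$ together with the symmetric monoidality of $U^\cG$, of $(-)_\bbQ$, and of $(-)_{I_\bbQ}$. The commutativity of the outer rectangle in \eqref{eq:diagram-bridge} reduces to $\tilde{\Phi}\circ \mathfrak{h}^\cG(-)_\bbQ\simeq U^\cG(\cG\circlearrowright \perf_\dg(-))_{\bbQ,I_\bbQ}$, which holds by construction. To extend $\Phi$ to numerical motives, I rely on the fact that fully faithful symmetric monoidal functors between rigid symmetric monoidal categories preserve categorical traces, hence preserve the $\cN$-ideals; this produces a fully faithful $\Phi_\cN$ after idempotent completion of the quotients, making the bottom square of \eqref{eq:diagram-bridge} commute.

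\textbf{Main obstacle.} The real work is in the Riemann--Roch step: verifying that the iso\-morphism $K_0^\cG(X\times Y)_{\bbQ,I_\bbQ}\simeq \bigoplus_j \mathrm{CH}^j_\cG(X\times Y)_\bbQ$ provided by Edidin--Graham is \emph{compatible} with composition of correspondences on one side and of bimodules on the other, so that $\Phi$ is genuinely a functor and not merely an object-wise bijection. This compatibility amounts to an equivariant projection-formula/push-pull calculation in $K$-theory, matching \eqref{eq:correspondences} with the bimodule composition \eqref{eq:bifunctor1} after Chern character; the $I_\bbQ$-localization is essential here, since it is exactly what trivializes the difference between $R(\cG)$-coefficients and $\bbQ$-coefficients and makes the Tate twist invisible.
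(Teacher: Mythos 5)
Your overall toolkit is right — equivariant Grothendieck--Riemann--Roch of Edidin--Graham, the Panin-style comparison of Theorem \ref{thm:bridge-Panin}, and the Morita equivalence $\perf_\dg(X)\otimes\perf_\dg(Y)\simeq\perf_\dg(X\times Y)$ — and your treatment of full faithfulness, monoidality, and the passage to numerical motives is essentially the paper's. However, the construction of $\Phi$ itself has a genuine gap. You propose to build a functor $\tilde{\Phi}\colon\Chow^\cG(k)_\bbQ\to\NChow^\cG(k)_{\bbQ,I_\bbQ}$ on the \emph{whole} category of equivariant Chow motives and then invoke the $2$-universal property of the orbit category. But there is no good definition of $\tilde{\Phi}$ on morphisms of $\Chow^\cG(k)_\bbQ$: a morphism in $\Chow^\cG(k)_\bbQ$ is a correspondence $f\in\mathrm{Corr}_\cG^{n-m}(X,Y)$ lying in a \emph{single} Chow degree, whereas the target Hom-group $K_0^\cG(X\times Y)_{\bbQ,I_\bbQ}$ is all of equivariant $K$-theory. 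The only plausible assignment — include a single degree via the inverse of the Riemann--Roch isomorphism $\tau$ — fails to respect composition, precisely because the push-pull formula in $K$-theory and in Chow groups differ by a Todd class factor, which mixes degrees. So $\tilde{\Phi}$ as proposed is not a functor, and the $2$-universal property cannot be invoked. Moreover, even if one had a functor, the $\bbP^1$ observation only gives an object-level isomorphism $\tilde{\Phi}(\bbL)\simeq\tilde{\Phi}(\mathbf{1})$; the $2$-universal property requires a full natural isomorphism of functors $\tilde{\Phi}\circ(-\otimes\bbQ(1))\Rightarrow\tilde{\Phi}$, which you do not supply.

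The paper avoids both problems by never defining a functor out of $\Chow^\cG(k)_\bbQ$ itself. Instead it works directly with the orbit category, whose Hom-groups $\oplus_j\mathrm{CH}^j_\cG(X\times Y)_\bbQ$ match the $K$-theory side degree for degree. Concretely, the paper introduces the intermediate Panin-type category $\cC_{\mathrm{sp}}^\cG(k)_{\bbQ,I_\bbQ}$, constructs a functor $\Phi_1\colon\cC_{\mathrm{sp}}^\cG(k)_{\bbQ,I_\bbQ}\to\Chow^\cG(k)_\bbQ/_{\!-\otimes\bbQ(1)}$ in the Riemann--Roch direction (which \emph{is} compatible with composition since the target sees all degrees at once and Todd-class terms are absorbed), shows $\Phi_1$ is an equivalence (essential surjectivity comes from $(X,p,m)\simeq(X,p,0)$ in the orbit category), and finally defines $\Phi:=\Phi_2\circ\Phi_1^{-1}$ with $\Phi_2$ induced from Theorem \ref{thm:bridge-Panin}. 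If you want to keep something like your plan, you must construct $\Phi$ on the orbit category directly rather than trying to first define it on $\Chow^\cG(k)_\bbQ$ and factor.
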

\begin{proof}
Let us denote by $\cC_{\mathrm{sp}}^\cG(k)_\bbQ$ the idempotent completion of the full subcategory of $\cC^\cG(k)_\bbQ$ (see \S\ref{sec:Panin}) consisting of the objects $(X,k)_\bbQ$. Given smooth projective $\cG$-schemes $X$ and $Y$, we have isomorphisms
$$\Hom_{\cC_{\mathrm{sp}}^\cG(k)_\bbQ}(\mathfrak{h}_0^\cG(X)_\bbQ, \mathfrak{h}_0^\cG(Y)_\bbQ) = K_0 \mathrm{Vect}^\cG(X\times Y)_\bbQ \simeq K_0^\cG(X\times Y)_\bbQ\,.$$
 Moreover, given $[\cF]_\bbQ \in K_0^\cG(X\times Y)_\bbQ$ and $[\mathcal{G}]_\bbQ \in K_0^\cG(Y\times Z)_\bbQ$, their composition is defined by the formula $(\pi_{XZ})_\ast(\pi^\ast_{XY}([\cF]_\bbQ)\otimes \pi^\ast_{YZ}([\mathcal{G}]_\bbQ))$. Furthermore, $\cC^\cG_{\mathrm{sp}}(k)_\bbQ$ comes equipped with the symmetric monoidal functor 
\begin{eqnarray*}
\mathfrak{h}^\cG_0(-)\colon \mathrm{SmProj}^\cG(k)^\op \too \cC_{\mathrm{sp}}^\cG(k)_\bbQ && X\mapsto (X,k)_\bbQ\,.
\end{eqnarray*}
Similarly to \S\ref{sub:augmentation}, we can also consider the $I_\bbQ$-localized category $\cC_{\mathrm{sp}}^\cG(k)_{\bbQ, I_\bbQ}$. 

Let us now construct a functor $\Phi_1$ making the following diagram commute
\begin{equation}\label{eq:diagram-aux2}
\xymatrix@C=3em@R=2em{
\mathrm{SmProj}^\cG(k)^\op \ar@{=}[r] \ar[d]_-{\mathfrak{h}^\cG(-)_\bbQ} & \mathrm{SmProj}(k)^\op \ar[rr]^-{X \mapsto \cG \circlearrowright \perf_\dg(X)} \ar[d]^-{\mathfrak{h}^\cG_0(-)_\bbQ} && \dgcat_{\mathrm{sp}}^\cG(k) \ar[d]^-{U^\cG(-)_\bbQ} \\
\Chow^\cG(k)_\bbQ \ar[d]_-\pi &\cC_{\mathrm{sp}}^\cG(k)_\bbQ \ar[d]^-{(-)_{I_\bbQ}}&& \NChow^\cG(k)_\bbQ \ar[d]^-{(-)_{I_\bbQ}} \\
\Chow^\cG(k)_\bbQ/_{\!-\otimes \bbQ(1)} &\cC_{\mathrm{sp}}^\cG(k)_{\bbQ, I_\bbQ} \ar[l]^-{\Phi_1} \ar[rr]_-{\Phi_2}&& \NChow^\cG(k)_{\bbQ, I_\bbQ} \,,
}
\end{equation}
where $\Phi_2$ stands for the $\bbQ$-linear, fully faithful, symmetric monoidal functor naturally induced from $\Psi$; see Theorem \ref{thm:bridge-Panin}. As proved in \cite[Cor.~5.1]{Edidin-RR}, we have a Riemann-Roch isomorphism $ \tau_X\colon K_0^\cG(X)_{\bbQ,I_\bbQ} \to \oplus_{i=0}^{\mathrm{dim}(X)}\mathrm{CH}^i_\cG(X)_\bbQ$ for every smooth projective $\cG$-scheme $X$. This isomorphism preserves the multiplicative structures. Moreover, given any $\cG$-equivariant map $f\colon X \to Y$, the following squares are commutative (we assume that $f$ is proper on the right-hand side):
$$
\xymatrix{
K_0^\cG(X)_{\bbQ, I_\bbQ}\ar[r]^-{\tau_X} & \oplus_{i=0}^{\mathrm{dim}(X)} \mathrm{CH}_\cG^i(X)_\bbQ  & K_0^\cG(X)_{\bbQ, I_\bbQ}\ar[r]^-{\tau_X} \ar[d]_-{f_\ast}& \oplus_{i=0}^{\mathrm{dim}(X)} \mathrm{CH}_\cG^i(X)_\bbQ \ar[d]^-{f_\ast} \\
 K_0^\cG(Y)_{\bbQ, I_\bbQ} \ar[u]^-{f^\ast} \ar[r]_-{\tau_Y} & \oplus_{i=0}^{\mathrm{dim}(Y)} \mathrm{CH}_\cG^i(Y)_\bbQ  \ar[u]_-{f^\ast} & K_0^\cG(Y)_{\bbQ, I_\bbQ}\ar[r]_-{\tau_Y} & \oplus_{i=0}^{\mathrm{dim}(Y)} \mathrm{CH}_\cG^i(Y)_\bbQ\,.
}
$$
By construction of the orbit category, we have isomorphisms
$$  \Hom_{\Chow^\cG(k)_\bbQ/_{\!-\otimes \bbQ(1)}}(\pi(\mathfrak{h}^\cG(X)_\bbQ),\pi(\mathfrak{h}^\cG(Y)_\bbQ))\simeq \oplus_{i=0}^{\mathrm{dim}(X\times Y)} \mathrm{CH}^i_\cG(X\times Y)_\bbQ\,.$$
Therefore, we conclude from the preceding considerations that the assignments
\begin{eqnarray*}
\mathfrak{h}_0^\cG(X)_\bbQ \mapsto \mathfrak{h}^\cG(X)_\bbQ &\mathrm{and}& K_0^\cG(X\times Y)_{\bbQ, I_\bbQ} \stackrel{\tau_{X\times Y}}{\too} \oplus_{i=0}^{\mathrm{dim}(X\times Y)} \mathrm{CH}^i_\cG(X\times Y)_\bbQ
\end{eqnarray*}
give rise to a functor $\Phi_1\colon \cC_{\mathrm{sp}}^\cG(k)_{\bbQ, I_\bbQ} \to \Chow^\cG(k)_\bbQ/_{\!-\otimes \bbQ(1)}$ making the diagram \eqref{eq:diagram-aux2} commute. The functor $\Phi_1$ is $\bbQ$-linear, fully-faithful, and symmetric monoidal. Since the objects $(X,p,m)$ and $(X,p, 0)$ become isomorphic in the orbit category $\Chow^\cG(k)_\bbQ/_{\!-\otimes \bbQ(1)}$, the functor $\Phi_1$ is moreover essentially surjective and consequently an equivalence of categories. Now, choose a (quasi-)inverse functor $\Phi_1^{-1}$ of $\Phi_1$ and define $\Phi$ as the composition $\Phi_2 \circ \Phi_1^{-1}$. By construction, $\Phi$ is $\bbQ$-linear, fully faithful, symmetric monoidal, and makes the upper rectangle of \eqref{eq:diagram-bridge} commute. 

Now, consider the following commutative diagram:
$$
\xymatrix@C=1.8em@R=2em{
\Chow^\cG(k)_\bbQ/_{\!-\otimes \bbQ(1)} \ar@{=}[r] \ar[d]& \Chow^\cG(k)_\bbQ/_{\!-\otimes \bbQ(1)} \ar[d] \ar[r]^-{\Phi} & \NChow^\cG(k)_{\bbQ, I_\bbQ} \ar[d] \\
\Num^\cG(k)_\bbQ/_{\!-\otimes \bbQ(1)} & \ar[l]^-{\Phi'_\cN} ((\Chow^\cG(k)_\bbQ/_{\!-\otimes \bbQ(1)})/\cN)^\natural \ar[r]_-{\Phi''_\cN} & ((\NChow^\cG(k)_{\bbQ, I_\bbQ})/\cN)^\natural\,,
}
$$
where $(-)^\natural$ stands for the idempotent completion construction. The functor $\Phi'_\cN$, whose construction follows from the general result \cite[Prop.~3.2]{AJM}, is an equivalence of categories. In what concerns $\Phi''_\cN$, it is naturally induced from $\Phi$. In the construction of $\NNum^\cG(k)_\bbQ$, we can consider $\NChow^\cG(k)_\bbQ$ as a $\bbQ$-linear category or as a $R(\cG)_\bbQ$-linear category. Making use of \cite[Prop.~1.4.1]{Brugieres}, we conclude that $((\NChow^\cG(k)_{\bbQ, I_\bbQ})/\cN)^\natural$ is naturally equivalent to the category $\NNum^\cG(k)_{\bbQ, I_\bbQ}$. Now, choose a (quasi-)inverse $(\Phi'_\cN)^{-1}$ of $\Phi'_\cN$ and define $\Phi_\cN$ as the composition $\Phi''_\cN \circ (\Phi'_\cN)^{-1}$. By construction, the functor $\Phi_\cN$ is $\bbQ$-linear, fully faithful, symmetric monoidal, and makes the bottom of diagram \eqref{eq:diagram-bridge} commute.
\end{proof}
\section{Full exceptional collections}\label{sec:decompositions}
\subsection{Full exceptional collections}
Let $\cT$ be a $k$-linear triangulated category. Recall from Bondal-Orlov \cite[Def.~2.4]{BO} and Huybrechts \cite[\S1.4]{Huybrechts} that a {\em semi-orthogonal decomposition of length $n$}, denoted by $\cT=\langle \cT_1, \ldots, \cT_n\rangle$, consists of full triangulated subcategory $\cT_1, \ldots, \cT_n \subset \cT$ satisfying the following conditions: the inclusions $\cT_i \subset \cT$ admit left and right adjoints; the triangulated category $\cT$ is generated by the objects of $\cT_1, \ldots, \cT_n$; and $\Hom_\cT(\cT_j, \cT_i)=0$ when $i <j$. An object $\cE \in \cT$ is called {\em exceptional} if $\Hom_\cT(\cE,\cE)=k$ and $\Hom_\cT(\cE, \cE[m])=0$ when $m \neq 0$. A {\em full exceptional collection of length $n$}, denoted by $\cT=(\cE_1, \ldots, \cE_n)$, is a sequence of exceptional objects $\cE_1, \ldots, \cE_n$ which generate the triangulated category $\cT$ and for which we have $\Hom_\cT(\cE_j, \cE_i[m])=0, m \in \bbZ$, when $i <j$. Every full exceptional collection gives rise to a semi-orthogonal decomposition $\cT=\langle \cD_c(k), \ldots, \cD_c(k)\rangle$.


\begin{proposition}\label{prop:semiorthogonal}
Let $\cA$ be a small $\cG$-dg category and $\cA_i \subseteq \cA, 1 \leq i \leq n$, full dg subcategories. Assume that $\sigma^\ast(\cA_i)\subseteq \cA_i$ for every $\sigma \in \cG$, and that $\cD_c(\cA)$ admits a semi-orthogonal decomposition $\langle \cD_c(\cA_1), \ldots, \cD_c(\cA_n)\rangle$. Under these assumptions, we have an isomorphism $U^\cG(\cG \circlearrowright \cA) \simeq \oplus_{i=1}^n U^\cG(\cG \circlearrowright \cA_i)$ in $\Hmo_0^\cG(k)$.
\end{proposition}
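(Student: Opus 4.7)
The plan is to lift the non-equivariant result -- that a semi-orthogonal decomposition $\cD_c(\cA) = \langle \cD_c(\cA_1), \ldots, \cD_c(\cA_n)\rangle$ yields an isomorphism $U(\cA) \simeq \oplus_i U(\cA_i)$ in $\Hmo_0(k)$ (a standard consequence of \cite{Additive}) -- to the $\cG$-equivariant setting, by upgrading every bimodule appearing in the standard argument to a $\cG$-equivariant object of the appropriate bimodule category.

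First, since $\sigma^\ast(\cA_i)\subseteq \cA_i$ for every $\sigma \in \cG$, each $\cA_i$ inherits a $\cG$-action from $\cA$ and the inclusion dg functor $\iota_i\colon \cA_i \hookrightarrow \cA$ is $\cG$-equivariant (with $\eta_\sigma=\id$). Applying $U^\cG$ and summing gives a morphism
$$\alpha\colon \bigoplus_{i=1}^n U^\cG(\cG \circlearrowright \cA_i) \too U^\cG(\cG \circlearrowright \cA)$$
in $\Hmo^\cG_0(k)$. To produce an inverse, for every $i$ I would consider the \textbf{projection} bimodule $\mathrm{B}_i \in \rep(\cA,\cA_i)$ representing the right adjoint to the inclusion of $\cD_c(\cA_i)$ along the semi-orthogonal structure. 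The $\cG$-action on $\rep(\cA,\cA_i)$ from Remark \ref{rk:dgbimodules} sends $\mathrm{B}_i$ to a bimodule representing the functor $\phi_\sigma^{\cA_i} \circ p_i \circ (\phi_\sigma^{\cA})^{-1}$; since each $\phi_\sigma^\cA$ preserves the semi-orthogonal decomposition (by the $\cG$-stability hypothesis), this functor equals $p_i$ up to canonical isomorphism, and uniqueness of right adjoints produces canonical isomorphisms $\theta_\sigma\colon \mathrm{B}_i \to \phi_\sigma(\mathrm{B}_i)$ satisfying the cocycle condition of Definition \ref{def:G-equivariant}. This exhibits $\mathrm{B}_i$ as a $\cG$-equivariant object $\widetilde{\mathrm{B}}_i \in \rep(\cA,\cA_i)^\cG$, whose Grothendieck classes assemble into
$$\beta\colon U^\cG(\cG \circlearrowright \cA) \too \bigoplus_{i=1}^n U^\cG(\cG \circlearrowright \cA_i).$$

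Finally, I would verify that $\alpha$ and $\beta$ are mutually inverse. The semi-orthogonal decomposition produces a canonical filtration of the identity bimodule ${}_{\id}\mathrm{B} \in \rep(\cA,\cA)$ whose successive cones are $\iota_i\otimes_{\cA_i}\mathrm{B}_i$; by canonicity this filtration lives in $\rep(\cA,\cA)^\cG$, and taking Grothendieck classes yields the identity $[{}_{\id}\mathrm{B}] = \sum_i [\iota_i \otimes_{\cA_i} \mathrm{B}_i]$ in $K_0\rep(\cA,\cA)^\cG$, which is exactly $\alpha \circ \beta = \id$. Conversely, the adjunction together with the semi-orthogonality conditions $\Hom(\cA_j, \cA_i) = 0$ for $i < j$ gives $\mathrm{B}_i \otimes_\cA \iota_j \simeq \delta_{ij}\,{}_{\id}\mathrm{B}_{\cA_i}$ as $\cG$-equivariant bimodules, hence $\beta \circ \alpha = \id$.

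The main obstacle is the middle step: producing the $\cG$-equivariant coherence data on the $\mathrm{B}_i$. While the $\cG$-stability of the decomposition immediately yields invariance of $\mathrm{B}_i$ up to isomorphism, constructing the canonical $\theta_\sigma$ and verifying the cocycle $\epsilon_{\rho,\sigma}(\mathrm{B}_i)\circ \phi_\rho(\theta_\sigma)\circ \theta_\rho = \theta_{\rho\sigma}$ requires matching the uniqueness statement for right adjoints against the $\cG$-action of Remark \ref{rk:dgbimodules} in a homotopy-coherent way; this is the delicate part of the argument, the rest being formal bookkeeping.
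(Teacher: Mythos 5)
Your construction of the forward map $\alpha$ from the $\cG$-equivariant inclusions coincides with the paper's, but from there the two arguments diverge. The paper does \emph{not} construct an explicit inverse bimodule: it invokes the Yoneda lemma to reduce the problem to showing that, for every test object $\cG\circlearrowright\cB$, the homomorphism $\oplus_i K_0\rep(\cB,\cA_i)^\cG \to K_0\rep(\cB,\cA)^\cG$ is an isomorphism, which it deduces from the semi-orthogonal decomposition $\rep(\cB,\cA)=\langle\rep(\cB,\cA_1),\ldots,\rep(\cB,\cA_n)\rangle$ plus two facts: the functor $(-)^\cG$ preserves semi-orthogonal decompositions, and $K_0$ turns a semi-orthogonal decomposition into a direct sum. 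That route is cleaner because it leans on the ready-made statement that $(-)^\cG$ preserves semi-orthogonal decompositions (this is essentially Elagin's result, cf.\ \cite{Elagin-semiorthogonal}), rather than re-deriving a special case by hand.

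Your route would also work, but the step you flag as ``delicate'' is a genuine gap, not bookkeeping. Producing isomorphisms $\theta_\sigma\colon\mathrm{B}_i\to\phi_\sigma(\mathrm{B}_i)$ by ``uniqueness of right adjoints'' only gives isomorphisms in the homotopy category, and you must then check the strict cocycle identity $\epsilon_{\rho,\sigma}(\mathrm{B}_i)\circ\phi_\rho(\theta_\sigma)\circ\theta_\rho=\theta_{\rho\sigma}$ holds on the nose in $\rep(\cA,\cA_i)$, which does not follow formally from uniqueness-up-to-isomorphism. This is precisely the nontrivial content of the statement ``$(-)^\cG$ preserves semi-orthogonal decompositions'': proving it requires showing that the projection functors of a $\cG$-stable SOD carry canonical $\cG$-equivariant structures, which is the subtle part of Elagin's theorem. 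Without either supplying that coherence argument or citing a result that does, the middle of your proof is unfinished. A secondary point: with $\mathrm{B}_i$ taken as the right adjoint of $\iota_i$, the identity $\mathrm{B}_i\otimes_\cA\iota_j\simeq\delta_{ij}$ fails in general (one only gets vanishing for $j>i$); $\beta\alpha$ is then upper-triangular with identity diagonal, which still suffices to conclude $\alpha$ is an isomorphism, but your proof should say so rather than assert the diagonal form.
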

\begin{proof}
The inclusions of dg categories $\cA_i \subseteq \cA$ give rise to a morphism
\begin{equation}\label{eq:induced-semi}
\oplus_{i=1}^n U^\cG(\cG \circlearrowright \cA_i) \too U^\cG(\cG \circlearrowright \cA)
\end{equation}
in the category $\Hmo^\cG_0(k)$. In order to show that \eqref{eq:induced-semi} is an isomorphism, it suffices by the Yoneda lemma to show that the induced group homomorphism
$$\Hom(U^\cG(\cG \circlearrowright \cB),\oplus_{i=1}^n U^\cG(\cG \circlearrowright \cA_i)) \too \Hom(U^\cG(\cG \circlearrowright \cB), U^\cG(\cG \circlearrowright \cA))$$
is invertible for every small $\cG$-dg category $\cG \circlearrowright \cB$. By construction of the additive category $\Hmo^\cG_0(k)$, the preceding homomorphism identifies with 
\begin{equation}\label{eq:associated-1}
\oplus^n_{i=1} K_0\rep(\cB,\cA_i)^\cG \too K_0\rep(\cB, \cA)^\cG\,.
\end{equation}
Since $\cD_c(\cA)=\langle\cD_c(\cA_1), \ldots, \cD_c(\cA_n)\rangle$, we have a semi-orthogonal decomposition
$$ \rep(\cB,\cA) = \langle \rep(\cB,\cA_1), \ldots, \rep(\cB,\cA_n)\rangle\,.$$
Using first the fact that the functor $(-)^\cG$ preserves semi-orthogonal decompositions, and then the fact that the functor $K_0(-)$ sends semi-orthogonal decompositions to direct sums, we conclude that the group homomorphism \eqref{eq:associated-1} is invertible.
\end{proof}
\subsection{Invariant objects}\label{sub:invariants}
Let $\cG \circlearrowright \cA$ be a small $\cG$-dg category. An object $M \in \cD(\cA)$ is called {\em $\cG$-invariant} if $\phi_\sigma(M)\simeq M$ for every $\sigma \in \cG$. Every $\cG$-equivariant object in $\cG \circlearrowright \cD(\cA)$ is $\cG$-invariant, but the converse does not hold.
\begin{remark}[Strictification]\label{rk:2-cocycle}
Given a $\cG$-invariant object $M \in \cD(\cA)$, let us fix an isomorphism $\theta_\sigma \colon M \to \phi_\sigma(M)$ for every $\sigma \in \cG$. If $\Hom_{\cD(\cA)}(M,M)\simeq k$, then $\phi_\rho(\theta_\sigma) \circ \theta_\rho$ and $\theta_{\rho \sigma}$
differ by multiplication with an invertible element $\alpha(\rho,\sigma) \in k^\times$. Moreover, these invertible elements define a $2$-cocycle $\alpha$ whose cohomology class $[\alpha] \in H^2(\cG,k^\times)$ is independent of the choice of the $\theta_\sigma$'s. As a consequence, $M \in \cD(\cA)^{\cG, \alpha}$. Furthermore, $M^{\otimes n} \in \cD(\cA)^{\cG, \alpha^n}$. Roughly speaking, every ``simple'' $\cG$-invariant object can be strictified into~a~twisted~$\cG$-equivariant~object. 
\end{remark}
\begin{proposition}\label{prop:exceptional}
Let $\cA$ be a small $\cG$-dg category such that $\cD_c(\cA)$ admits a full exceptional collection $(\cE_1, \ldots, \cE_n)$. Suppose that $\cE_i\in \cD_c(\cA)^{\cG, \alpha_i}$, with $[\alpha_i] \in H^2(\cG,k^\times)$. Then, we have $U^\cG(\cG \circlearrowright \cA)\simeq \oplus_{i=1}^n U^\cG(\cG \circlearrowright_{\alpha_i} k)$ in $\Hmo_0^\cG(k)$.
\end{proposition}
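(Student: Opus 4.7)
The plan is to apply Proposition \ref{prop:semiorthogonal} to a carefully chosen collection of $\cG$-stable full dg subcategories $\cA_i \subseteq \cA$, and then identify each resulting $\cG \circlearrowright \cA_i$ with $\cG \circlearrowright_{\alpha_i} k$ via a $\cG$-equivariant Morita equivalence. First, I would invoke Lemma \ref{lem:inverts} to replace $\cG \circlearrowright \cA$ by the Yoneda-equivalent $\cG \circlearrowright \cC_{c,\dg}(\cA)$, so without loss of generality $\cA$ admits direct sums and $\dgHo(\cA)$ is idempotent complete. For each $1 \le i \le n$, let $\cA_i \subseteq \cA$ be the full dg subcategory whose set of objects is the smallest subset containing $\cE_i$ and closed under every $\phi_\sigma$ (concretely, the iterated orbit $\{\phi_{\sigma_r} \circ \cdots \circ \phi_{\sigma_1}(\cE_i) : r \ge 0,\, \sigma_1,\ldots,\sigma_r \in \cG\}$). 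By construction $\phi_\sigma(\cA_i) \subseteq \cA_i$, and since $\cE_i$ is $\cG$-invariant every object of $\cA_i$ is isomorphic in $\cD_c(\cA)$ to $\cE_i$; combined with $\End(\cE_i) = k$ and $\Hom(\cE_i, \cE_i[m]) = 0$ for $m \ne 0$, the dg category $\cA_i$ is Morita equivalent to $k$ and $\cD_c(\cA_i)$ identifies with the thick subcategory $\langle \cE_i \rangle \subseteq \cD_c(\cA)$. The restricted natural isomorphisms $\epsilon_{\rho,\sigma}$ equip $\cA_i$ with the structure of a $\cG$-dg category.

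Since the full exceptional collection yields a semi-orthogonal decomposition $\cD_c(\cA) = \langle \langle \cE_1 \rangle, \ldots, \langle \cE_n \rangle \rangle = \langle \cD_c(\cA_1), \ldots, \cD_c(\cA_n) \rangle$, Proposition \ref{prop:semiorthogonal} gives
\begin{equation*}
U^\cG(\cG \circlearrowright \cA) \simeq \bigoplus_{i=1}^n U^\cG(\cG \circlearrowright \cA_i)
\end{equation*}
in $\Hmo_0^\cG(k)$.

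Finally, I would exhibit a $\cG$-equivariant Morita equivalence $F \colon \cG \circlearrowright_{\alpha_i} k \to \cG \circlearrowright \cA_i$. Define the underlying dg functor $F \colon k \to \cA_i$ by $* \mapsto \cE_i$; since $\cE_i$ generates the Morita class of $\cA_i$ with $\End(\cE_i) = k$ concentrated in degree zero, $F$ is a Morita equivalence. Promote $F$ to a $\cG$-equivariant dg functor by setting $\eta_\sigma(*) := \theta_\sigma \colon \cE_i \to \phi_\sigma(\cE_i)$, where the $\theta_\sigma$'s are the structure morphisms of the $\alpha_i$-twisted $\cG$-equivariant object $\cE_i$. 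Evaluated at $*$, the coherence axiom $\eta_{\rho\sigma} \circ (F \circ \epsilon_{\rho,\sigma}) = (\epsilon_{\rho,\sigma} \circ F) \circ (\phi_\rho \circ \eta_\sigma) \circ (\eta_\rho \circ \phi_\sigma)$ reduces, using $\phi_\sigma^k = \id$ and $\epsilon_{\rho,\sigma}^k = \alpha_i(\rho,\sigma)$, to the identity
\begin{equation*}
\alpha_i(\rho,\sigma)\, \theta_{\rho\sigma} = \epsilon_{\rho,\sigma}(\cE_i) \circ \phi_\rho(\theta_\sigma) \circ \theta_\rho,
\end{equation*}
which is precisely the defining relation for $\cE_i \in \cD_c(\cA)^{\cG,\alpha_i}$. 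Hence $F$ is a $\cG$-equivariant Morita equivalence, and a second application of Lemma \ref{lem:inverts} yields $U^\cG(\cG \circlearrowright_{\alpha_i} k) \simeq U^\cG(\cG \circlearrowright \cA_i)$, completing the proof.

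The main obstacle is the construction of $\cA_i$. Because $\phi_\rho \circ \phi_\sigma \ne \phi_{\rho\sigma}$ strictly in a $\cG$-dg category (only a natural isomorphism $\epsilon_{\rho,\sigma}$ exists), the $\cG$-closed orbit of $\cE_i$ may be infinite, and one must check with care both that the restricted coherence data $(\phi_\sigma, \epsilon_{\rho,\sigma})$ genuinely makes $\cA_i$ a $\cG$-dg category (required to invoke Proposition \ref{prop:semiorthogonal}) and that the natural embedding $\cD_c(\cA_i) \hookrightarrow \cD_c(\cA)$ is fully faithful with essential image exactly $\langle \cE_i \rangle$.
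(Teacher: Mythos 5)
Your overall strategy---decompose via Proposition \ref{prop:semiorthogonal} using the semi-orthogonal decomposition $\cD_c(\cA)=\langle\langle\cE_1\rangle,\ldots,\langle\cE_n\rangle\rangle$---is the same one the paper invokes (``follow \emph{mutatis mutandis} the proof of Proposition \ref{prop:semiorthogonal}''), but you take a more concrete route. The paper works directly in $\Hmo^\cG(k)$: it observes that by definition $\Hom_{\Hmo^\cG(k)}(\cG\circlearrowright_{\alpha_i}k, \cG\circlearrowright\cA)$ is the set of isomorphism classes in $\cD_c(\cA)^{\cG,\alpha_i}$, so each $\cE_i$ \emph{is} already a morphism, and then it assembles these classes into a single morphism $\oplus_i U^\cG(\cG\circlearrowright_{\alpha_i}k)\to U^\cG(\cG\circlearrowright\cA)$ in $\Hmo_0^\cG(k)$ and verifies invertibility by the Yoneda argument from Proposition \ref{prop:semiorthogonal}. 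You instead build explicit $\cG$-stable full dg subcategories $\cA_i$, apply Proposition \ref{prop:semiorthogonal} as a black box, and then attempt to identify each $\cG\circlearrowright\cA_i$ with $\cG\circlearrowright_{\alpha_i}k$ by a strict $\cG$-equivariant Morita equivalence and Lemma \ref{lem:inverts}. The subcategory construction and its stability under $\phi_\sigma$ are fine, and the identification $\cD_c(\cA_i)\simeq\langle\cE_i\rangle$ is standard.

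The gap is in the last step. The hypothesis $\cE_i\in\cD_c(\cA)^{\cG,\alpha_i}$ places $\cE_i$ as an $\alpha_i$-twisted $\cG$-equivariant object in the \emph{ordinary triangulated category} $\cD_c(\cA)$: the $\theta_\sigma$ are morphisms in $\cD_c(\cA)$ and the relation $\epsilon_{\rho,\sigma}(\cE_i)\circ\phi_\rho(\theta_\sigma)\circ\theta_\rho=\alpha_i(\rho,\sigma)\theta_{\rho\sigma}$ holds only up to homotopy, i.e.\ in $H^0$ of the Hom complexes. But to promote $F\colon k\to\cA_i$, $*\mapsto\cE_i$, to a $\cG$-equivariant \emph{dg} functor, the $\eta_\sigma(*)$ must be \emph{closed degree-zero cycles} and the coherence axiom must hold \emph{on the nose} in $Z^0$, not merely after passing to cohomology. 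Your computation of the coherence axiom is correct, but it verifies an identity in $\cD_c(\cA)$, not in the dg category, so you have not actually produced an object of $\dgcat^\cG(k)$. One would need a strictification argument: lift each $\theta_\sigma$ to a cycle $\tilde\theta_\sigma$ and correct these lifts so the cocycle relation holds strictly. This is plausible (the obstruction lives in negative cohomology of the relevant Hom complexes, which vanishes because $\cE_i$ is exceptional), but it requires proof, and you have skipped it. The paper's proof avoids this issue entirely, because a morphism in $\Hmo^\cG(k)$ is by definition an isomorphism class in $\rep(\cA,\cB)^\cG$---i.e.\ a $\cG$-equivariant object in an \emph{ordinary} triangulated category---which is exactly the level at which the data $(\cE_i,\theta_\sigma)$ naturally lives, with no strictification required.
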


\begin{proof}
By construction, the set of morphisms $\Hom_{\Hmo^\cG(k)}(\cG \circlearrowright_{\alpha_i} k, \cG \circlearrowright \cA)$ is given by the set of isomorphism classes of the triangulated category $\rep(k, \cA)^{\cG, \alpha_i}\simeq \cD_c(\cA)^{\cG, \alpha_i}$. Consequently, the object $\cE_i \in  \cD_c(\cA)^{\cG, \alpha_i}$ corresponds to a morphism $\cE_i \colon \cG \circlearrowright_{\alpha_i} k \to \cG \circlearrowright \cA$ in $\Hmo^\cG(k)$. Consider the associated morphism 
\begin{equation}\label{eq:associated}
([\cE_1]\ldots [\cE_i]\ldots [\cE_n])\colon \oplus_{i=1}^n U^\cG(\cG \circlearrowright_{\alpha_i} k) \too U^\cG(\cG \circlearrowright \cA)
\end{equation}
in the additive category $\Hmo^\cG_0(k)$. In order to show that \eqref{eq:associated} is an isomorphism, we can now follow {\em mutatis mutandis} the proof of Proposition \ref{prop:semiorthogonal}.
%
\end{proof}
\begin{corollary}
Given a $\cG$-dg category $\cG \circlearrowright \cA$ as in Proposition \ref{prop:exceptional}, we have:
\begin{itemize}
\item[(i)] $E^\cG(\cG \circlearrowright \cA) \simeq \oplus_{i=1}^n E(k_{\alpha_i}[\cG])$ for every $\cG$-equivariant additive invariant;
\item[(ii)] $E^\circlearrowright(\cG \circlearrowright \cA) \simeq \oplus_{i=1}^n (E(k),\id)$ for every $\cG$-enhanced additive invariant.
\end{itemize}
\end{corollary}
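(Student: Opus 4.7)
The plan is to deduce both parts by applying the universal factorizations provided by Propositions \ref{prop:factorization} and \ref{prop:factorization-enhaced} to the motivic decomposition established in Proposition \ref{prop:exceptional}.

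First, recall from Proposition \ref{prop:exceptional} the isomorphism
$$U^\cG(\cG \circlearrowright \cA) \simeq \bigoplus_{i=1}^n U^\cG(\cG \circlearrowright_{\alpha_i} k)$$
in $\Hmo^\cG_0(k)$. For part (i), I would choose the additive functor $\overline{E^\cG}\colon \Hmo^\cG_0(k)\to \mathrm{D}$ of Proposition \ref{prop:factorization} satisfying $\overline{E^\cG}\circ U^\cG \simeq E^\cG$, and apply it to the above isomorphism. Additivity yields
$$E^\cG(\cG \circlearrowright \cA) \simeq \bigoplus_{i=1}^n E^\cG(\cG \circlearrowright_{\alpha_i} k).$$
It then remains to identify each summand with $E(k_{\alpha_i}[\cG])$. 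Since the Yoneda dg functor $k \to \cC_{c,\dg}(k)$ is a $\cG$-equivariant Morita equivalence (for the action induced by $\cG \circlearrowright_{\alpha_i} k$), Lemma \ref{lem:inverts} implies that $U^\cG(\cG \circlearrowright_{\alpha_i} k) \simeq U^\cG(\cG \circlearrowright_{\alpha_i} \cC_{c,\dg}(k))$, and hence
$$E^\cG(\cG \circlearrowright_{\alpha_i} k) \simeq E^\cG(\cG \circlearrowright_{\alpha_i} \cC_{c,\dg}(k)) \simeq E(k_{\alpha_i}[\cG]),$$
where the last identification is precisely Example \ref{ex:twisted-new}(iv) (itself a consequence of Example \ref{ex:twisted}).

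For part (ii), the argument is strictly parallel: apply the additive functor $\overline{E^\circlearrowright}\colon \Hmo^\cG_0(k)\to \mathrm{D}^\cG$ of Proposition \ref{prop:factorization-enhaced} to the same motivic decomposition to obtain
$$E^\circlearrowright(\cG \circlearrowright \cA) \simeq \bigoplus_{i=1}^n E^\circlearrowright(\cG \circlearrowright_{\alpha_i} k).$$
The key observation is that in the $\cG$-dg category $\cG \circlearrowright_{\alpha_i} k$ (see Example \ref{ex:2-cocycles}) the underlying dg equivalences are $\phi_\sigma = \id$, while the $2$-cocycle $\alpha_i$ is recorded only in the natural isomorphisms $\epsilon_{\rho,\sigma}=\alpha_i(\rho,\sigma)$. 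By definition of the $\cG$-enhanced invariant, $E^\circlearrowright(\cG \circlearrowright_{\alpha_i} k) = (E(k),E(\phi_\sigma)) = (E(k),\id)$, and the claimed decomposition follows.

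There is no serious obstacle here: the only conceptual point worth highlighting is that in contrast with part (i), the cohomology classes $[\alpha_i]$ are entirely forgotten by $E^\circlearrowright$, since the enhanced invariant only sees the dg equivalences $\phi_\sigma$ and not the coherence data $\epsilon_{\rho,\sigma}$ encoding the $2$-cocycles.
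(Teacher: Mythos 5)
Your proof is correct and takes essentially the same route as the paper, which simply cites the combination of Propositions \ref{prop:factorization}, \ref{prop:factorization-enhaced}, and \ref{prop:exceptional} with Example \ref{ex:twisted-new}(iv). The extra detail you supply — passing from $\cG \circlearrowright_{\alpha_i} k$ to $\cG \circlearrowright_{\alpha_i} \cC_{c,\dg}(k)$ via the Yoneda $\cG$-equivariant Morita equivalence and Lemma \ref{lem:inverts}, so that Example \ref{ex:twisted-new}(iv) applies literally — is precisely the step the paper leaves implicit, and your observation about why the $2$-cocycle data is invisible to $E^\circlearrowright$ is the same ``fact'' the paper cites without comment.
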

\begin{proof}
Item (i) follows from the combination of Propositions \ref{prop:factorization} and \ref{prop:exceptional} with Example \ref{ex:twisted-new}(iv). Item (ii) follows from the combination of Propositions \ref{prop:factorization-enhaced} and \ref{prop:exceptional} with the fact that $E^\circlearrowright(\cG \circlearrowright_\alpha k) \simeq (E(k),\id)$ for every $[\alpha] \in H^2(\cG,k^\times)$.
\end{proof}
\begin{proposition}\label{prop:exceptional-schemes}
Let $X$ be a quasi-compact quasi-separated $\cG$-scheme such that $\perf(X)$ admits a full exceptional collection $(\cE_1, \ldots, \cE_n)$ of $\cG$-invariant objects. Let us denote by $[\alpha_i] \in H^2(\cG, k^\times)$ the cohomology class of Remark \ref{rk:2-cocycle} associated to the exceptional object $\cE_i$. Under these assumptions and notations, we have an isomorphism $U^\cG(\cG \circlearrowright \perf_\dg(X)) \simeq \oplus_{i=1}^n U^\cG(\cG \circlearrowright_{\alpha_i} k)$ in $\Hmo^\cG_0(k)$.
\end{proposition}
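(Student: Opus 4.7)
The plan is to reduce the proposition to Proposition~\ref{prop:exceptional} by strictifying each $\cG$-invariant exceptional object $\cE_i$ into an $\alpha_i$-twisted $\cG$-equivariant object via Remark~\ref{rk:2-cocycle}, and then applying Proposition~\ref{prop:exceptional} verbatim to $\cA := \perf_\dg(X)$. So the heavy lifting has already been done in Proposition~\ref{prop:exceptional}, and the content of Proposition~\ref{prop:exceptional-schemes} is really just the observation that exceptional $\cG$-invariant objects are automatically strictifiable.

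First, I would record that $\cD_c(\perf_\dg(X)) \simeq \perf(X)$ and that the $\cG$-action on $\perf_\dg(X)$ given by Example~\ref{ex:G-schemes} descends under this equivalence to the pull-back $\cG$-action $\phi_\sigma = \sigma^\ast$ on $\perf(X)$. Consequently, the full exceptional collection $(\cE_1, \ldots, \cE_n)$ of $\perf(X)$ lives inside $\cD_c(\perf_\dg(X))$, its objects are $\cG$-invariant in the sense of \S\ref{sub:invariants}, and the exceptionality condition yields $\End_{\cD_c(\perf_\dg(X))}(\cE_i) = k$ for every $i$.

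Next, I would apply the strictification procedure of Remark~\ref{rk:2-cocycle} to each $\cE_i$. Since $\cE_i$ is $\cG$-invariant, we may choose isomorphisms $\theta_{i,\sigma}\colon \cE_i \to \sigma^\ast(\cE_i)$ for each $\sigma \in \cG$ with $\theta_{i,1} = \id$. Because $\End(\cE_i) = k$, the compositions $\sigma^\ast(\theta_{i,\rho})\circ \theta_{i,\sigma}$ and $\theta_{i,\sigma\rho}$ differ by a unique scalar $\alpha_i(\sigma,\rho) \in k^\times$, and these scalars define a $2$-cocycle whose cohomology class $[\alpha_i] \in H^2(\cG,k^\times)$ is independent of the choices made. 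By construction, the pair $(\cE_i, \theta_{i,\sigma})$ is an $\alpha_i$-twisted $\cG$-equivariant object, i.e.\ $\cE_i \in \cD_c(\perf_\dg(X))^{\cG,\alpha_i}$, with the cohomology class $[\alpha_i]$ described in the statement.

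Finally, with the strictifications in hand, all the hypotheses of Proposition~\ref{prop:exceptional} are met: the dg category $\perf_\dg(X)$ is a small $\cG$-dg category whose compact derived category admits a full exceptional collection $(\cE_1,\ldots,\cE_n)$ with $\cE_i \in \cD_c(\perf_\dg(X))^{\cG,\alpha_i}$. Applying Proposition~\ref{prop:exceptional} gives the required isomorphism
\[
U^\cG(\cG \circlearrowright \perf_\dg(X)) \simeq \bigoplus_{i=1}^n U^\cG(\cG \circlearrowright_{\alpha_i} k)
\]
in $\Hmo^\cG_0(k)$. The only mild subtlety is the identification $\cD_c(\perf_\dg(X)) \simeq \perf(X)$ together with the matching of the two $\cG$-actions, which is routine; no new obstacle arises.
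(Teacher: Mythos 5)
Your proposal is correct and takes exactly the approach the paper intends: the paper's proof is the one-line instruction ``Apply Proposition~\ref{prop:exceptional} to the dg category $\perf_\dg(X)$,'' and you have simply unfolded the implicit strictification step from Remark~\ref{rk:2-cocycle} and the identification $\cD_c(\perf_\dg(X))\simeq\perf(X)$ that makes that application licit.
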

\begin{proof}
Apply Proposition \ref{prop:exceptional} to the dg category $\perf_\dg(X)$. 
\end{proof}
\begin{example}[Projective spaces]
Let $\mathbb{P}^n$ be the $n^{\mathrm{th}}$ projective space. As proved by Beilinson in \cite{Beilinson}, $\perf(\mathbb{P}^n)$ admits a full exceptional collection  $(\cO, \cO(1), \ldots, \cO(n))$. Moreover, the objects $\cO(i)$ are $\cG$-invariant for any $\cG$-action on $\bbP^n$. Let us denote by $[\alpha]$ the cohomology class of Remark \ref{rk:2-cocycle} associated to the exceptional object $\cO(1)$. Under these notations, Proposition \ref{prop:exceptional-schemes} yields an isomorphism
$$ U^\cG(\cG \circlearrowright \perf_\dg(\bbP^n)) \simeq U^\cG(\cG \circlearrowright_1 k)\oplus U^\cG(\cG \circlearrowright_\alpha k)\oplus  \cdots \oplus U^\cG(\cG \circlearrowright_{\alpha^n} k)\,.$$
\end{example}
\begin{example}[Odd dimensional quadrics]\label{ex:quadrics}
Assume that $\mathrm{char}(k)\neq 2$. Let $(V,q)$ be a non-degenerate quadratic form of odd dimension $n\geq 3$ and $Q_q\subset \bbP(V)$ the associated smooth projective quadric of dimension $d:=n-2$. As proved by Kapranov in \cite{Kapranov}, $\perf(Q_q)$ admits a full exceptional collection $(\cS, \cO, \cO(1), \cdots, \cO(d-1))$, where $\cS$ denotes the spinor bundle. Moreover, the objects $\cO(i)$ and $\cS$ are $\cG$-invariant for any $\cG$-action on $Q_q$; see \cite[\S3.2]{Elagin-semiorthogonal}. Let us denote by $[\alpha]$ and $[\beta]$ the cohomology classes of Remark \ref{rk:2-cocycle} associated to the exceptional object $\cO(1)$ and $\cS$, respectively. Under these notations, Proposition \ref{prop:exceptional-schemes} yields an isomorphism between $U^\cG(\cG\circlearrowright \perf_\dg(Q_q))$ and the direct sum
$$U^\cG(\cG \circlearrowright_\beta k)\oplus U^\cG(\cG \circlearrowright_1k)\oplus U^\cG(\cG \circlearrowright_\alpha k)  \oplus \cdots \oplus U^\cG(\cG \circlearrowright_{\alpha^{(d-1)}}k)\,.$$
\end{example}
\begin{example}[Grassmannians]
Assume that $\mathrm{char}(k)=0$. Let $V$ be a $k$-vector space of dimension $d$, $n\leq d$ a positive integer, and $\mathrm{Gr}:=\mathrm{Gr}(n,V)$ the Grassmannian of $n$-dimensional subspaces in $V$. As proved by Kapranov in \cite{Kapranov}, $\perf(\mathrm{Gr})$ admits a full exceptional collection $(\cO, \cU^\vee, \ldots, \Sigma^{\lambda}_{n(d-n)}\cU^\vee)$, where $\cU^\vee$ denotes the dual of the tautological vector bundle on $\mathrm{Gr}$ and $\Sigma^\lambda_i$ the Schur functor associated to a Young diagram $\lambda$ with $|\lambda|=i$ having at most $n$ rows and $d-n$ rows. Moreover, the objects $\Sigma^\lambda_i\cU^\vee$ are $\cG$-invariant for any $\cG$-action on $Q_q$ which is induced by an homomorphism $\cG \to \mathrm{PGL}(V)$. Let us denote by $[\alpha]$ the cohomology class of Remark \ref{rk:2-cocycle} associated to the exceptional object $\cU^\vee$. Under these notations, Proposition \ref{prop:exceptional-schemes} yields an isomorphism
$$ U^\cG(\cG\circlearrowright \perf_\dg(\mathrm{Gr})) \simeq U^\cG(\cG \circlearrowright_1 k) \oplus U^\cG(\cG \circlearrowright_\alpha k) \oplus \cdots \oplus (\oplus_{\lambda} U^\cG(\cG \circlearrowright_{\alpha^{n(d-n)}}k))\,.
$$
%
\end{example}
\subsection*{Proof of Theorem \ref{thm:via}}
In order to simplify the exposition, let us write $\mathfrak{h}^\cG(X)_\bbQ(i)$ instead of $\mathfrak{h}^\cG(X)_\bbQ\otimes \bbQ(1)^{\otimes i}$. Following Remark \ref{rk:2-cocycle}, let us denote by $[\alpha_i] \in H^2(\cG, k^\times)$ the cohomology class associated to the exceptional object $\cE_i$. By combining Propositions \ref{prop:same} and \ref{prop:exceptional-schemes}, we obtain induced isomorphisms
$$U^\cG(\cG \circlearrowright \perf_\dg(X))_{\bbQ, I_\bbQ} \simeq \oplus_{i=1}^n U^\cG(\cG \circlearrowright_{\alpha_i} k)_{\bbQ, I_\bbQ}\simeq  \oplus_{i=1}^n U^\cG(\cG \circlearrowright_1 k)_{\bbQ, I_\bbQ}$$ in the category $\Hmo_0^\cG(k)_{\bbQ, I_\bbQ}$. Since $\mathfrak{h}^\cG(\mathrm{Spec}(k))_\bbQ$ (with trivial $\cG$-action) is the $\otimes$-unit of $\Chow^\cG(k)_\bbQ$ and $U^\cG(\cG \circlearrowright_1 k)_{\bbQ, I_\bbQ}$ the $\otimes$-unit of $\NChow^\cG(k)_{\bbQ, I_\bbQ}$, we conclude from Theorem \ref{thm:bridge} that $\pi(\mathfrak{h}^\cG(X)_\bbQ)$ is isomorphic to $\oplus_{j=1}^n\pi(\mathfrak{h}^\cG(\mathrm{Spec}(k))_\bbQ)$ in the orbit category $\Chow^\cG(k)_\bbQ/_{\!-\otimes \bbQ(1)}$. Let us now ``lift'' this isomorphism to the category $\Chow^\cG(k)_\bbQ$. Since the functor $\pi$ is additive, there exist morphisms
$$ \mathrm{f} =\{f_i\}_{i \in \bbZ} \in \oplus_{i \in \bbZ} \Hom_{\Chow^\cG(k)_\bbQ}(\mathfrak{h}^\cG(X)_\bbQ, \oplus_{j=1}^n \mathfrak{h}^\cG(\mathrm{Spec}(k))_\bbQ(i))$$
$$ \mathrm{g} =\{g_i\}_{i \in \bbZ} \in \oplus_{i \in \bbZ} \Hom_{\Chow^\cG(k)_\bbQ}(\oplus_{j=1}^n \mathfrak{h}^\cG(\mathrm{Spec}(k))_\bbQ, \mathfrak{h}^\cG(X)_\bbQ(i))$$
verifying the equalities $\mathrm{g}\circ \mathrm{f} = \id = \mathrm{f} \circ \mathrm{g}$. Moreover, as explained in \S\ref{sec:relationChow}, we have
$$ \Hom_{\Chow^\cG(k)_\bbQ}(\mathfrak{h}^\cG(X)_\bbQ, \oplus^n_{j=1} \mathfrak{h}^\cG(\mathrm{Spec}(k))_\bbQ(i))\simeq \oplus_{j=1}^n \mathrm{CH}_\cG^{\mathrm{dim}(X)+i}(X)_\bbQ$$
$$\Hom_{\Chow^\cG(k)_\bbQ}(\oplus^n_{j=1} \mathfrak{h}^\cG(\mathrm{Spec}(k))_\bbQ, \mathfrak{h}^\cG(X)_\bbQ(i)) \simeq \oplus^n_{j=1} \mathrm{CH}^i_\cG(X)_\bbQ\,.$$
This implies that $f_i=0$ when $i \notin \{ -\mathrm{dim}(X), \ldots, 0\}$ and that $g_i=0$ when $i \notin \{0, \ldots, \mathrm{dim}(X)\}$. The sets $\{ f_{-r}\,|\, 0 \leq r \leq \mathrm{dim}(X)\}$ and $\{g_r(-r)\,|\, 0 \leq r\leq \mathrm{dim}(X)\}$ give then rise to morphisms in the category of $\cG$-equivariant Chow motives:
\begin{equation}\label{eq:morph-comp-1}
\mathfrak{h}^\cG(X)_\bbQ \too \oplus^{\mathrm{dim}(X)}_{r=0} \oplus_{j=1}^n \mathfrak{h}^\cG(\mathrm{Spec}(k))_\bbQ(-r)
\end{equation}
\begin{equation}\label{eq:morph-comp-2}
\oplus^{\mathrm{dim}(X)}_{r=0} \oplus_{j=1}^n \mathfrak{h}^\cG(\mathrm{Spec}(k))_\bbQ(-r) \too \mathfrak{h}^\cG(X)_\bbQ\,.
\end{equation}
The composition $\eqref{eq:morph-comp-2}\circ \eqref{eq:morph-comp-1}$ agrees with the $0^{\mathrm{th}}$-component of $\mathrm{g} \circ \mathrm{f} =\id$, \ie with the identity of $\mathfrak{h}^\cG(X)_\bbQ$. Therefore, since $\mathfrak{h}^\cG(\mathrm{Spec}(k))_\bbQ(-r)=\bbL^{\otimes r}$, the $\cG$-equivariant Chow motive $\mathfrak{h}^\cG(X)_\bbQ$ is a direct summand of $\oplus_{r=0}^{\mathrm{dim}(X)}\oplus_{j=1}^n \bbL^{\otimes r}$. By definition of the $\cG$-equivariant Lefschetz motive $\bbL$, we have $\Hom_{\Chow^\cG(k)_\bbQ}(\bbL^{\otimes p}, \bbL^{\otimes q})=\delta_{pq} \cdot \bbQ$, where $\delta_{pq}$ stands for the Kronecker symbol. This implies that $\mathfrak{h}^\cG(X)_\bbQ$ is a subsum of $\oplus_{r=0}^{\mathrm{dim}(X)} \oplus_{j=1}^n \bbL^{\otimes r}$. Using the fact that $\pi(\bbL^{\otimes r})$, resp. $\pi(\mathfrak{h}^\cG(X)_\bbQ)$,  is isomorphic to $\pi(\mathfrak{h}^\cG(\mathrm{Spec}(k))_\bbQ)$, resp. $\oplus^n_{j=1} \pi(\mathfrak{h}^\cG(\mathrm{Spec}(k))_\bbQ)$, we conclude finally that there exists a choice of integers $r_1, \ldots, r_n \in \{0, \ldots, \mathrm{dim}(X)\}$ such that $\mathfrak{h}^\cG(X)_\bbQ \simeq \bbL^{\otimes r_1}\oplus \cdots \oplus \bbL^{\otimes r_n}$. This concludes the proof.
\subsection{Permutations}\label{sub:permutation}
Given a subgroup $\cH \subseteq \cG$, consider the small $\cG$-dg category $\cG \circlearrowright \amalg_{\overline{\rho} \in \cG/\cH}k$, where $\cG$ acts by permutation of the components.
\begin{proposition}\label{prop:permutation}
Let $\cG \circlearrowright \cA$ be a small $\cG$-dg category such that $\cD_c(\cA)$ admits a full exceptional collection $(\cE_1, \ldots, \cE_n)$. Assume that the induced $\cG$-action on $\cD_c(\cA)$ transitively permutes the objects $\cE_1, \ldots, \cE_n$ (up to isomorphism) and that $\Hom(\cE_i, \cE_j[m])=0$ for every $m \in \bbZ$ and $i \neq j$. Let $\cH \subseteq \cG$ be the stabilizer of $\cE_1$. If the cohomology group $H^2(\cH,k^\times)$ is trivial (\eg\ $k=\bbC$ and $\cH$ cyclic), then we have an isomorphism $\cG \circlearrowright \cA \simeq \cG \circlearrowright \amalg_{\overline{\rho} \in \cG/\cH}k$ in $\Hmo^\cG(k)$.
\end{proposition}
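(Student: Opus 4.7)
The plan is to construct a $\cG$-equivariant Morita equivalence $F \colon \cG \circlearrowright \amalg_{\overline{\rho} \in \cG/\cH} k \to \cG \circlearrowright \cC_{c,\dg}(\cA)$ from the permutation $\cG$-dg category on the left, and then combine it with the $\cG$-equivariant Yoneda Morita equivalence $\cA \to \cC_{c,\dg}(\cA)$ and Lemma \ref{lem:inverts}. Two preliminary observations will set the stage. First, since the $\cG$-action on $\{\cE_1,\ldots,\cE_n\}$ is transitive with stabilizer $\cH$ of $\cE_1$, one has $n = [\cG : \cH]$. Second, the complete orthogonality hypothesis $\Hom(\cE_i,\cE_j[m]) = 0$ for $i \neq j$ will upgrade the semi-orthogonal decomposition coming from the exceptional collection to an honest product decomposition $\cD_c(\cA) \simeq \prod_{i=1}^n \cD_c(k)$, matching the block decomposition of $\amalg_{\overline{\rho}} k$.

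The next step is to strictify $\cE_1$ into an $\cH$-equivariant object. Since $\cE_1$ is $\cH$-invariant and exceptional (so $\End(\cE_1)=k$), Remark \ref{rk:2-cocycle} attaches to $\cE_1$ a cohomology class $[\alpha] \in H^2(\cH,k^\times)$, and the hypothesis $H^2(\cH,k^\times)=0$ will then provide closed degree zero isomorphisms $\theta_h \colon \cE_1 \to \phi_h(\cE_1)$, $h \in \cH$, with $\theta_1=\id$ and satisfying the $\cH$-cocycle relation $\epsilon_{h',h}(\cE_1) \circ \phi_{h'}(\theta_h) \circ \theta_{h'} = \theta_{h'h}$. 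After fixing coset representatives $\{\rho_{\overline{\rho}}\}_{\overline{\rho} \in \cG/\cH} \subset \cG$ with $\rho_{\overline{1}}=1$, I would define $F$ on objects by $F(\overline{\rho}) := \phi_{\rho_{\overline{\rho}}}(\cE_1)$. For each $\sigma \in \cG$ the element $h_{\sigma,\overline{\rho}} := \rho_{\overline{\sigma\rho}}^{-1} \sigma \rho_{\overline{\rho}} \in \cH$ is well defined, and the component $\eta_\sigma(\overline{\rho}) \colon F(\overline{\sigma\rho}) \to \phi_\sigma(F(\overline{\rho}))$ will be assembled by composing $\phi_{\rho_{\overline{\sigma\rho}}}(\theta_{h_{\sigma,\overline{\rho}}})$ with the instances $\epsilon_{\rho_{\overline{\sigma\rho}},\,h_{\sigma,\overline{\rho}}}(\cE_1)$ and $\epsilon_{\sigma,\rho_{\overline{\rho}}}(\cE_1)^{-1}$ of the coherence data.

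A routine bookkeeping, relying on the cocycle axioms for $\epsilon$ and the elementary group identity $h_{\tau\sigma,\overline{\rho}} = h_{\tau,\overline{\sigma\rho}}\,h_{\sigma,\overline{\rho}}$ in $\cH$, will reduce the cocycle identity for the $\eta_\sigma$'s to the $\cH$-cocycle identity for the $\theta_h$'s, confirming that $F$ is a bona fide $\cG$-equivariant dg functor. At this point $F$ is automatically fully faithful, since $\Hom(\overline{\rho},\overline{\rho}') = \delta_{\overline{\rho},\overline{\rho}'}\,k$ matches $\Hom(F(\overline{\rho}),F(\overline{\rho}'))$ by complete orthogonality and exceptionality, and its image ranges over $\{\cE_i\}$ up to transitive relabeling, hence generates $\cD_c(\cC_{c,\dg}(\cA)) \simeq \cD_c(\cA)$. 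Therefore $F$ will be a $\cG$-equivariant Morita equivalence, and Lemma \ref{lem:inverts} will deliver the desired isomorphism in $\Hmo^\cG(k)$. The main obstacle is the middle step: without the hypothesis $H^2(\cH,k^\times)=0$, the obstruction to strictifying $\cE_1$ does not vanish and the best $F$ one could manufacture would land in an $\alpha$-twisted variant $\cG \circlearrowright \amalg_{\overline{\rho}} k_\alpha$ of the permutation $\cG$-dg category, so the cohomological vanishing is precisely what forces the untwisted conclusion.
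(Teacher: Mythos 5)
Your proposal is correct, and the conceptual backbone matches the paper's: use $H^2(\cH,k^\times)=0$ to strictify $\cE_1$ into an $\cH$-equivariant object via Remark \ref{rk:2-cocycle}, turn this into a $\cG$-equivariant Morita equivalence from the permutation $\cG$-dg category $\cG\circlearrowright\amalg_{\overline{\rho}\in\cG/\cH}k$, verify it generates using transitivity plus the complete orthogonality $\Hom(\cE_i,\cE_j[m])=0$, and conclude by Lemma \ref{lem:inverts}. The one genuine difference is how the morphism is produced. You assemble a $\cG$-equivariant dg functor $F$ by hand, fixing coset representatives $\rho_{\overline{\rho}}$, setting $F(\overline{\rho})=\phi_{\rho_{\overline{\rho}}}(\cE_1)$, and defining the natural isomorphisms $\eta_\sigma$ from the elements $h_{\sigma,\overline{\rho}}=\rho_{\overline{\sigma\rho}}^{-1}\sigma\rho_{\overline{\rho}}\in\cH$. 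This forces you to verify the coherence axiom for a $\cG$-equivariant dg functor, which you correctly flag as reducing to the $\cH$-cocycle relation via the identity $h_{\tau\sigma,\overline{\rho}}=h_{\tau,\overline{\sigma\rho}}\,h_{\sigma,\overline{\rho}}$, but it is nontrivial bookkeeping and your proposal does not actually carry it out. The paper instead invokes the restriction/induction identification $\rep(\amalg_{\overline{\rho}\in\cG/\cH}k,\cA)^\cG\simeq(\Pi_{\overline{\rho}}\cD_c(\cA))^\cG\simeq\cD_c(\cA)^\cH$ (a relative version of the equivalence underlying Proposition \ref{prop:adjunctions}), so that the $\cH$-equivariant object $\cE_1$ directly \emph{is} the required morphism in $\Hmo^\cG(k)$, with the cocycle bookkeeping absorbed once and for all into this equivalence rather than re-derived. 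Both approaches work; yours is more explicit and makes the mechanism visible, while the paper's abstract route is shorter and avoids the coherence verification. Your closing remark about the $\alpha$-twisted variant $\amalg_{\overline{\rho}}k_\alpha$ arising when $H^2(\cH,k^\times)\neq 0$ is a nice and accurate observation about why the hypothesis is needed.
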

\begin{proof}
Similarly to the proof of Proposition \ref{prop:adjunctions}, we have the equivalence:
\begin{eqnarray*}
(\Pi_{\overline{\rho} \in \cG/\cH}\cD_c(\cA))^\cG \too \cD_c(\cA)^\cH && (\{\mathrm{B}_{\overline{\rho}}\}_{\overline{\rho} \in \cG/\cH}, \{\theta_\sigma\}_{\sigma \in \cG}) \mapsto (\mathrm{B}_{\overline{1}}, \{\theta_\sigma\}_{\sigma \in \cH})\,.
\end{eqnarray*}
Consequently, we obtain an induced identification 
\begin{eqnarray}\label{eq:induced-relative}
&& \quad \,\, \Hom(U^\cG(\cG \circlearrowright \amalg_{\overline{\rho} \in \cG/\cH}k), U^\cG(\cG \circlearrowright \cA))\simeq \Hom(U^\cH(\cH \circlearrowright_1 k), U^\cH(\cH \circlearrowright \cA))\,.
\end{eqnarray}
Since by assumption the cohomology group $H^2(\cH,k^\times)$ is trivial, the $\cH$-invariant object $\cE_1$ is $\cH$-equivariant, \ie it belongs to $\cD_c(\cA)^\cH$; see Remark \ref{rk:2-cocycle}. Via the identification \eqref{eq:induced-relative}, $\cE_1$ corresponds then to a morphism $
\cG \circlearrowright \amalg_{\overline{\rho} \in \cG/\cH}k \to \cG \circlearrowright \cA$ in $\Hmo^\cG(k)$. Using the fact that $\Hom_{\cD_c(\cA)}(\cE_i, \cE_j[m])=0$ for every $m \in \bbZ$ and $i \neq j$, we observe that this morphism is a $\cG$-equivariant Morita equivalence. Therefore, the proof follows now automatically from Lemma \ref{lem:inverts}.
\end{proof}
\begin{proposition}\label{prop:blocks}
Let $X$ be a quasi-compact quasi-separated $\cG$-scheme such that $\perf(X)$ admits a full exceptional collection
\begin{equation}\label{eq:blocks}
\left(\cE_1^1, \ldots, \cE_1^{s_1}, \ldots, \cE_i^1, \ldots, \cE_i^{s_i}, \ldots, \cE_n^1, \ldots, \cE_n^{s_n}\right)\,.
\end{equation}
For every fixed $i \in \{1, \ldots, n\}$, assume that the $\cG$-action on $\perf(X)$ transitively permutes the objects $\cE_i^1, \ldots, \cE_i^{s_i}$ (up to isomorphism) and that $\Hom(\cE_i^j, \cE_i^l[m])=0$ for every $m \in \bbZ$ and $j \neq l$. Let $\cH_i \subseteq \cG$ be the stabilizer of $\cE^1_i$. If $\cH_i \neq \cG$, assume that the cohomology group $H^2(\cH_i, k^\times)$ is trivial. If $\cH_i=\cG$, denote by $[\alpha_i] \in H^2(\cG,k^\times)$ the cohomology class of Remark \ref{rk:2-cocycle} associated to the exceptional object $\cE^1_i$. Under these assumptions, we have an isomorphism $U^\cG(\cG \circlearrowright \perf_\dg(X)) \simeq \oplus_{i=1}^n U^\cG(\cG \circlearrowright \perf_\dg(X)_i)$ in $\Hmo_0^\cG(k)$ where 
$$ U^\cG(\cG \circlearrowright \perf_\dg(X)_i) \simeq \begin{cases} U^\cG(\cG \circlearrowright \amalg_{\overline{\rho} \in \cG/\cH_i}k) & \mathrm{if}\,\, \cH_i\neq \cG \\ U^\cG(\cG \circlearrowright_{\alpha_i} k) & \mathrm{if}\,\,\cH_i=\cG\,.
\end{cases}
$$

\end{proposition}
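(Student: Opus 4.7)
The plan is to decompose $U^\cG(\cG \circlearrowright \perf_\dg(X))$ blockwise using Proposition \ref{prop:semiorthogonal}, and then identify each block summand using either Proposition \ref{prop:exceptional} (when the block is a singleton) or Proposition \ref{prop:permutation} (when the block is permuted non-trivially).

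First, the exceptional collection \eqref{eq:blocks} induces a semi-orthogonal decomposition $\perf(X)=\langle \cT_1,\ldots,\cT_n\rangle$, where $\cT_i$ denotes the full triangulated subcategory generated by $\cE_i^1,\ldots,\cE_i^{s_i}$. By hypothesis the $\cG$-action on $\perf(X)$ permutes the generators of each block (up to isomorphism), so every $\cT_i$ is preserved by the action. For each $i$ I will define a full dg subcategory $\perf_\dg(X)_i\subseteq \perf_\dg(X)$ whose set of objects is $\{\sigma^\ast(\cE_i^j)\,|\,\sigma\in\cG,\,1\leq j\leq s_i\}$; this set is strictly $\cG$-stable and $\cD_c(\perf_\dg(X)_i)=\cT_i$. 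Applying Proposition \ref{prop:semiorthogonal} to the collection $\{\perf_\dg(X)_i\}_{i=1}^n$ then yields the desired decomposition
\begin{equation*}
U^\cG(\cG \circlearrowright \perf_\dg(X))\simeq \bigoplus_{i=1}^n U^\cG\!\left(\cG \circlearrowright \perf_\dg(X)_i\right)\quad \text{in}\quad \Hmo_0^\cG(k)\,.
\end{equation*}

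Next, I will identify each summand. By orbit-stabilizer applied to the transitive $\cG$-action on $\{\cE_i^1,\ldots,\cE_i^{s_i}\}$, one has $s_i=[\cG:\cH_i]$, so $\cH_i=\cG$ if and only if $s_i=1$. In the case $\cH_i=\cG$, the block consists of the single $\cG$-invariant exceptional object $\cE_i^1$, and Proposition \ref{prop:exceptional} (applied to the dg category $\perf_\dg(X)_i$, whose full exceptional collection $(\cE_i^1)$ has associated cohomology class $[\alpha_i]$ via Remark \ref{rk:2-cocycle}) gives $U^\cG(\cG \circlearrowright \perf_\dg(X)_i)\simeq U^\cG(\cG \circlearrowright_{\alpha_i} k)$. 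In the case $\cH_i\neq \cG$, the hypothesis $H^2(\cH_i,k^\times)=0$ is in force; the $\cG$-action on $\cD_c(\perf_\dg(X)_i)=\cT_i$ transitively permutes the exceptional generators, and the Hom-vanishing $\Hom(\cE_i^j,\cE_i^l[m])=0$ for $j\neq l$ is the block semi-orthogonality assumption. Hence Proposition \ref{prop:permutation} applies and yields the isomorphism $\cG \circlearrowright \perf_\dg(X)_i\simeq \cG \circlearrowright \amalg_{\overline{\rho}\in \cG/\cH_i}k$ in $\Hmo^\cG(k)$; passing to $\Hmo_0^\cG(k)$ through the functor \eqref{eq:functor2-2} gives the required identification of the $i$-th summand.

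The main subtlety is the verification of strict $\cG$-invariance in the hypothesis $\sigma^\ast(\cA_i)\subseteq \cA_i$ of Proposition \ref{prop:semiorthogonal}: since the $\cG$-action only permutes the listed exceptional objects up to isomorphism, one cannot simply take the dg subcategory on the objects $\cE_i^1,\ldots,\cE_i^{s_i}$. Enlarging to the full dg subcategory on the $\cG$-orbit $\{\sigma^\ast(\cE_i^j)\}$ remedies this without affecting the derived category, since each $\sigma^\ast(\cE_i^j)$ is isomorphic in $\perf(X)$ to some $\cE_i^l$ and hence lies in $\cT_i$. Once this technical point is dispatched, the argument is a direct combination of Propositions \ref{prop:semiorthogonal}, \ref{prop:exceptional}, and \ref{prop:permutation}.
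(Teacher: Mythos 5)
Your proof takes essentially the same route as the paper's: decompose blockwise via Proposition~\ref{prop:semiorthogonal}, then identify each block via Proposition~\ref{prop:permutation} (for $\cH_i\neq\cG$) or Proposition~\ref{prop:exceptional}/\ref{prop:exceptional-schemes} (for $\cH_i=\cG$). One small technical slip: your claim that the set of objects $\{\sigma^\ast(\cE_i^j)\}$ is ``strictly $\cG$-stable'' is not quite right, because the equivalences $\phi_\sigma$ compose only up to the natural isomorphisms $\epsilon_{\rho,\sigma}$, so $\phi_\tau(\phi_\sigma(\cE_i^j))$ is isomorphic, but not necessarily equal, to an object of the form $\phi_\rho(\cE_i^l)$; hence the full dg subcategory on the orbit need not satisfy the on-the-nose inclusion $\sigma^\ast(\cA_i)\subseteq\cA_i$ demanded by Proposition~\ref{prop:semiorthogonal}. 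The paper sidesteps this by letting $\perf_\dg(X)_i$ be the full dg subcategory on \emph{all} objects of $\perf(X)$ that lie in $\cT_i$; since $\cT_i$ is closed under isomorphism and $\phi_\sigma(\cT_i)\subseteq\cT_i$, this subcategory is strictly $\cG$-stable. With that replacement your argument coincides with the paper's.
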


%
\begin{remark}
Note that in the particular case where $s_1=\cdots=s_n=1$, Proposition \ref{prop:blocks} reduces to Proposition \ref{prop:exceptional-schemes}.
\end{remark}
\begin{proof}
Let us denote by $\perf(X)_i$ the smallest triangulated subcategory of $\perf(X)$ generated by the exceptional objects $\cE_i^1, \ldots, \cE_i^{s_i}$. In the same vein, let us write $\perf_\dg(X)_i$ for the full dg subcategory of $\perf_\dg(X)$ consisting of those objects which belong to $\perf(X)_i$. Under these notations, the full exceptional collection \eqref{eq:blocks} can be written as a semi-orthogonal decompositon $\perf(X) = \langle \perf(X)_1, \ldots, \perf(X)_n\rangle$. Making use of Proposition \ref{prop:semiorthogonal}, we hence obtain an isomorphism between $U^\cG(\cG \circlearrowright \perf_\dg(X))$ and $\oplus^n_{i=1} U^\cG(\cG \circlearrowright \perf_\dg(X)_i)$ in $\Hmo^\cG_0(k)$. The proof follows now from Proposition \ref{prop:permutation}, resp. Proposition \ref{prop:exceptional-schemes}, applied to each one of the $\cG$-dg categories such that $\cH_i\neq \cG$, resp. $\cH_i =\cG$.
\end{proof}
\begin{example}[Even dimensional quadrics]\label{ex:evenquadrics}
Let $Q_q$ be a smooth projective quadric of even dimension $d$; consult Example \ref{ex:quadrics}. As proved by Kapranov in \cite{Kapranov}, $\perf(Q_q)$ admits a full exceptional collection $(S_-, S_+, \cO, \cO(1), \ldots, \cO(d-1))$, where $\cS_+$ and $\cS_-$ denote the spinor bundles. Moreover, we have $\Hom(S_-,S_+[m])=0$ for every $m \in \bbZ$. Similarly to Example \ref{ex:quadrics}, the objects $\cO(i)$ are $\cG$-invariant for any $\cG$-action on $Q_q$. In what concerns the spinor bundles, they are $\cG$-invariant or sent to each other by the quotient $\cG/\cH \simeq C_2$; see \cite[\S3.2]{Elagin-semiorthogonal}. In the former case, we obtain a motivic decomposition similar to the one of Example \ref{ex:quadrics}. In the latter case, assuming that $H^2(\cH, k^\times)$ is trivial, Proposition \ref{prop:blocks} yields an isomorphism between $U^\cG(\cG \circlearrowright \perf_\dg(Q_q))$ and the direct sum
$$ U^\cG(\cG \circlearrowright \amalg_{\overline{\rho} \in C_2} k) \oplus U^\cG(\cG \circlearrowright_1 k) \oplus U^\cG(\cG \circlearrowright_\alpha k) \oplus \cdots \oplus U^\cG(\cG \circlearrowright_{\alpha^{(d-1)}}k)\,,$$
where $[\alpha]$ stands for the cohomology class of Remark \ref{rk:2-cocycle} associated to $\cO(1)$.

%
\end{example}
\begin{example}[del Pezzo surfaces]\label{ex:Pezzo}
Assume that $\mathrm{char}(k)=0$. Let $X$ be the del Pezzo surface obtained by blowing up $\bbP^2$ at two distinct points $x$ and $y$. As proved by Orlov in \cite[\S4]{Orlov}, $\perf(X)$ admits a full exceptional collection of length five $(\cO_{E_1}(-1), \cO_{E_2}(-1), \cO, \cO(1), \cO(2))$, where $E_1:=\pi^{-1}(x)$ and $E_2:=\pi^{-1}(y)$ denote the exceptional divisors of the blow-up $\pi\colon X \to \bbP^2$. Moreover, we have $\Hom(\cO_{E_1}(-1), \cO_{E_2}(-1)[m])=0$ for every $m \in \bbZ$. The objects $\cO(i)$ are $\cG$-invariant for every $\cG$-action on $X$. In what concerns $\cO_{E_1}(-1)$ and $\cO_{E_2}(-1)$, they are $\cG$-invariant or sent to each other by the quotient $\cG/\cH \simeq C_2$; see \cite[\S3.3]{Elagin-semiorthogonal}. In the former case, Proposition \ref{prop:exceptional-schemes} yields an isomorphism~between~$U^\cG(\cG \circlearrowright \perf_\dg(X))$~and
$$ U^\cG(\cG \circlearrowright_\gamma k)\oplus U^\cG(\cG \circlearrowright_\beta k) \oplus U^\cG(\cG \circlearrowright_1 k) \oplus  U^\cG(\cG \circlearrowright_\alpha k) \oplus U^\cG(\cG \circlearrowright_{\alpha^2} k)\,,$$
where $[\alpha]$, $[\beta]$, and $[\gamma]$, stand for the cohomology classes of Remark \ref{rk:2-cocycle} associated to the exceptional objects $\cO(1)$, $\cO_{E_2}(-1)$, and $\cO_{E_1}(-1)$, respectively. In the latter case, assuming that the cohomology group $H^2(\cH, k^\times)$ is trivial, Proposition \ref{prop:blocks} yields an isomorphism between $U^\cG(\cG \circlearrowright \perf_\dg(X))$ and the direct sum
$$ U^\cG(\cG \circlearrowright \amalg_{\overline{\rho} \in C_2} k)\oplus U^\cG(\cG \circlearrowright_1 k) \oplus U^\cG(\cG \circlearrowright_\alpha k) \oplus U^\cG(\cG \circlearrowright_{\alpha^2} k)\,.$$

\end{example}
\begin{remark}[Direct summands]
Let $X$ be a smooth projective $\cG$-scheme as in Proposition \ref{prop:blocks}. A proof similar to  Theorem \ref{thm:via} shows that $\mathfrak{h}(X)_\bbQ$ is a direct summand of the $\cG$-equivariant Chow motive $\oplus^{\mathrm{dim}(X)}_{r=0} \oplus^n_{i=0} \mathfrak{h}^\cG(\amalg_{\overline{\rho} \in \cG/\cH_i}\mathrm{Spec}(k))_\bbQ(-r)$, where $\cG$ acts by permutation of the components. 
\end{remark}
\section{Equivariant motivic measures}
In this section, by a {\em variety} we mean a reduced separated $k$-scheme of finite type. Let us write $\mathrm{Var}^\cG(k)$ for the category of {\em $\cG$-varieties}, \ie varieties which are equipped with a $\cG$-action such that every orbit is contained in an affine open set; this condition is automatically verified whenever $X$ is quasi-projective. The {\em Grothendieck ring of $\cG$-varieties $K_0\mathrm{Var}^\cG(k)$} is defined as the quotient of the free abelian group on the set of isomorphism classes of $\cG$-varieties $[X]$ by the relations $[X]=[Y]+[X\backslash Y]$, where $Y$ is a closed $\cG$-subvariety of $X$. The multiplication is induced by the product of $\cG$-varieties (with diagonal $\cG$-action). A {\em $\cG$-equivariant motivic measure} is a ring homomorphism $\mu^\cG\colon K_0\mathrm{Var}^\cG(k) \to R$. 
\begin{example}
\begin{itemize}
\item[(i)] When $k \subseteq \bbC$, the topological Euler characteristic $\chi$ (with compact support) gives rise to a $\cG$-equivariant motivic measure
\begin{eqnarray*}
\mu_{\chi}^\cG\colon K_0\mathrm{Var}^\cG(k) \too R_\bbQ(\cG) && [X] \mapsto \sum_i (-1)^i H_c^i(X^{\mathrm{an}},\bbQ)\,,
\end{eqnarray*}
where $H^i_c(X^{\mathrm{an}},\bbQ)$ is a finite dimensional $\bbQ$-linear $\cG$-representation;
\item[(ii)] When $\mathrm{char}(k)=0$, the characteristic polynomial $P_X(t):=\sum_i H_{dR}^i(X)t^i$, with $X$ a smooth projective $\cG$-variety, gives rise to a $\cG$-equivariant motivic measure $\mu_P^\cG\colon K_0 \mathrm{Var}^\cG(k) \to R(\cG)[t]$, where $H^i_{dR}(X)$ is considered as a finite dimensional $k$-linear $\cG$-representation.
\end{itemize}
\end{example}
Let us denote by $K_0(\NChow^\cG(k))$ the Grothendieck ring of the additive symmetric monoidal category of $\cG$-equivariant noncommutative Chow motives.
\begin{theorem}\label{thm:measure}
When $\mathrm{char}(k)=0$, the assignment $X \mapsto [U^\cG(\cG \circlearrowright \perf_\dg(X))]$, with $X$ a smooth projective $\cG$-variety, gives rise to a $\cG$-equivariant motivic measure
$$ \mu^\cG_{\mathrm{nc}}\colon K_0\mathrm{Var}^\cG(k) \too K_0(\NChow^\cG(k))\,.$$
\end{theorem}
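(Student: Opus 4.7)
The plan is to invoke Bittner's presentation of $K_0\mathrm{Var}^\cG(k)$ in characteristic zero, which expresses this ring by generators the isomorphism classes $[X]$ of smooth projective $\cG$-varieties subject to $[\emptyset]=0$ and the blow-up relation
\begin{equation*}
[\mathrm{Bl}_Y X] - [E] = [X] - [Y]
\end{equation*}
for every smooth projective $\cG$-subvariety $Y\hookrightarrow X$ with exceptional divisor $E\to Y$ of the blow-up. (This equivariant Bittner presentation is available for finite group actions in characteristic zero.) Hence to construct $\mu^\cG_{\mathrm{nc}}$ it suffices to define it on smooth projective $\cG$-varieties $X$ by the stated formula and to verify three things: (i) the definition is independent of isomorphism class, (ii) the blow-up relation lifts to $K_0(\NChow^\cG(k))$, and (iii) the assignment is multiplicative.

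Items (i) and (iii) are essentially formal. Isomorphic $\cG$-varieties produce $\cG$-equivariantly Morita equivalent dg categories, so they give the same class after applying $U^\cG$. For multiplicativity, the dg category $\perf_\dg(X\times Y)$ is $\cG$-equivariantly Morita equivalent to $\perf_\dg(X)\otimes\perf_\dg(Y)$ endowed with the diagonal $\cG$-action; since $U^\cG$ is symmetric monoidal (see \S\ref{sec:equivariant}), we obtain $[U^\cG(\cG\circlearrowright\perf_\dg(X\times Y))]=[U^\cG(\cG\circlearrowright\perf_\dg(X))]\cdot[U^\cG(\cG\circlearrowright\perf_\dg(Y))]$ in $K_0(\NChow^\cG(k))$.

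The main work, and the main obstacle, is (ii). I would appeal to Orlov's blow-up formula: if $c=\mathrm{codim}_X(Y)$, then there are semi-orthogonal decompositions
\begin{equation*}
\perf(\mathrm{Bl}_Y X) = \langle \perf(Y)_{-c+1},\ldots,\perf(Y)_{-1},\perf(X)\rangle,
\end{equation*}
\begin{equation*}
\perf(E) = \langle \perf(Y)_{-c+1},\ldots,\perf(Y)_{-1},\perf(Y)\rangle,
\end{equation*}
where the twists are by powers of $\cO_E(1)$ and the embeddings $\perf(X)\hookrightarrow\perf(\mathrm{Bl}_Y X)$ and $\perf(Y)\hookrightarrow\perf(\mathrm{Bl}_Y X)$ come from derived pullback along $\mathrm{Bl}_YX\to X$ and derived pushforward along $E\hookrightarrow\mathrm{Bl}_YX$ composed with pullback along $E\to Y$. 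Because $Y\hookrightarrow X$ is $\cG$-equivariant, the blow-up $\mathrm{Bl}_Y X$, the exceptional divisor $E$, and the tautological bundle $\cO_E(1)$ inherit canonical $\cG$-equivariant structures; consequently each full dg subcategory appearing above is $\sigma^*$-stable for every $\sigma\in\cG$.

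With these $\cG$-stable semi-orthogonal decompositions in hand, Proposition \ref{prop:semiorthogonal} yields, in $\Hmo_0^\cG(k)$ and hence in $K_0(\NChow^\cG(k))$, the identities
\begin{equation*}
[U^\cG(\cG\circlearrowright\perf_\dg(\mathrm{Bl}_Y X))] = [U^\cG(\cG\circlearrowright\perf_\dg(X))] + \sum_{i=1}^{c-1} [U^\cG(\cG\circlearrowright\perf_\dg(Y)_{-i})],
\end{equation*}
\begin{equation*}
[U^\cG(\cG\circlearrowright\perf_\dg(E))] = [U^\cG(\cG\circlearrowright\perf_\dg(Y))] + \sum_{i=1}^{c-1}[U^\cG(\cG\circlearrowright\perf_\dg(Y)_{-i})].
\end{equation*}
Subtracting these two equalities gives exactly the blow-up relation, and the theorem follows. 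The delicate point here is checking that Orlov's fully faithful functors can be promoted to $\cG$-equivariant dg functors (so that Proposition \ref{prop:semiorthogonal} genuinely applies)---this reduces to the naturality of derived pullback/pushforward with respect to the $\cG$-action, together with the $\cG$-equivariance of $\cO_E(1)$.
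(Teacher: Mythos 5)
Your proposal is correct and follows essentially the same route as the paper: Bittner's presentation of $K_0\mathrm{Var}^\cG(k)$, the $\cG$-equivariant Morita equivalence $\perf_\dg(X)\otimes\perf_\dg(Y)\simeq\perf_\dg(X\times Y)$ (plus monoidality of $U^\cG$) for multiplicativity, and Orlov's blow-up and projective-bundle semi-orthogonal decompositions combined with Proposition \ref{prop:semiorthogonal} for the blow-up relation. One small point to make explicit in your subtraction step: the subcategories $\perf(Y)_{-i}$ inside $\perf(\mathrm{Bl}_YX)$ and inside $\perf(E)$ are not literally the same dg subcategories, so one should note (as the paper does) that each $\perf_\dg(Y)_{-i}$ is $\cG$-equivariantly Morita equivalent to $\perf_\dg(Y)$ — this is what justifies treating them as equal classes in $K_0(\NChow^\cG(k))$.
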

\begin{proof}
Thanks to Bittner's presentation of the ring $K_0\mathrm{Var}^\cG(k)$ (see \cite[Lem.~7.1]{Bittner}), it suffices to verify the following two conditions:
\begin{itemize}
\item[(i)] Given smooth projective $\cG$-schemes $X$ and $Y$, we have:
$$ [U^\cG(\cG \circlearrowright \perf_\dg(X \times Y))]=[U^\cG(\cG \circlearrowright \perf_\dg(X)) \otimes U^\cG(\cG \circlearrowright \perf_\dg(Y))]\,.$$
\item[(ii)] Let $X$ be a smooth projective $\cG$-variety, $Y$ a closed smooth $\cG$-subvariety of codimension $c$, $\mathrm{Bl}_Y(X)$ the blow-up of $X$ along $Y$, and $E$ the exceptional divisor of this blow-up. Under these notations, the difference
$$ [U^\cG(\cG \circlearrowright \perf_\dg(\mathrm{Bl}_Y(X)))]- [U^\cG(\cG \circlearrowright \perf_\dg(E))]$$
is equal to the difference
$$[U^\cG(\cG \circlearrowright \perf_\dg(X))]- [U^\cG(\cG \circlearrowright \perf_\dg(Y))]\,.$$
\end{itemize}
As proved in \cite[Lem.~4.26]{Gysin}, we have the $\cG$-equivariant Morita equivalence
\begin{eqnarray*}
\perf_\dg(X) \otimes \perf_\dg(Y) \too \perf_\dg(X\times Y) && (\cF,\mathcal{G}) \mapsto \cF \boxtimes \mathcal{G}\,.
\end{eqnarray*}
Therefore, condition (i) follows from the combination of Lemma \ref{lem:inverts} with the fact that the functor $U^\cG$ is symmetric monoidal. In what concerns condition (ii), recall from Orlov \cite[Thm.~4.3]{Orlov} that $\perf_\dg(\mathrm{Bl}_Y(X))$ contains full $\cG$-dg subcategories $\perf_\dg(X), \perf_\dg(Y)_0, \ldots, \perf_\dg(Y)_{c-2}$ inducing a semi-orthogonal decomposition $\perf(\mathrm{Bl}_Y(X))=\langle \perf(X), \perf(Y)_0, \ldots, \perf(Y)_{c-2}\rangle$. Moreover, we have an isomorphism $\perf_\dg(Y)_i\simeq \perf_\dg(Y)$ in $\Hmo^\cG(k)$ for every $i$. Making use of Proposition \ref{prop:semiorthogonal}, we obtain the equality 
\begin{equation*}
[U^\cG(\cG \circlearrowright \perf_\dg(\mathrm{Bl}_Y(X)))]=[U^\cG(\cG \circlearrowright \perf_\dg(X))]+ (c-1)[U^\cG(\cG \circlearrowright \perf_\dg(Y))]\,.
\end{equation*}
Similarly, recall from \cite[Thm.~2.6]{Orlov} that $\perf_\dg(E)$ contains full $\cG$-dg subcategories $\perf_\dg(Y)_0, \ldots, \perf_\dg(Y)_{c-1}$ inducing a semi-orthogonal decomposition $\perf(E)=\langle \perf(Y)_0, \ldots, \perf(Y)_{c-1}\rangle$. Moreover, $\perf_\dg(Y)_i \simeq \perf_\dg(Y)$ in $\Hmo^\cG(k)$ for every $i$. Making use of Proposition \ref{prop:semiorthogonal}, we conclude that 
$$ [U^\cG(\cG \circlearrowright \perf_\dg(E))] = c [U^\cG(\cG \circlearrowright \perf_\dg(Y))]\,.$$
Condition (ii) follows now automatically from the preceding two equalities.
\end{proof}
\begin{proposition}\label{prop:factorization-measure}
The motivic measure $\mu_\chi^\cG\otimes_\bbQ \bbC$ factors through $\mu^\cG_{\mathrm{nc}}$.
\end{proposition}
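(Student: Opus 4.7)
The plan is to construct a ring homomorphism $\phi\colon K_0(\NChow^\cG(k))\to R_\bbC(\cG)$ realising the claimed factorisation, and to extract it from a suitable $\cG$-enhanced additive invariant.

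First I would take $E:=HP(-)\otimes_k\bbC$, the $\bbC$-linearisation of periodic cyclic homology, as a symmetric monoidal additive invariant from $\dgcat(k)$ to the (abelian, symmetric monoidal) category $\mathrm{D}$ of finite-dimensional $\bbZ/2$-graded $\bbC$-vector spaces; on smooth proper dg categories $HP$ is finite-dimensional, so the restriction of $E$ to $\dgcat_{\mathrm{sp}}(k)$ lands in $\mathrm{D}$. The associated $\cG$-enhanced invariant $E^\circlearrowright\colon\dgcat^\cG(k)\to\mathrm{D}^\cG$ is again symmetric monoidal. By (the symmetric-monoidal refinement of) Proposition~\ref{prop:factorization-enhaced}, $E^\circlearrowright$ factors through $U^\cG$ via a symmetric monoidal additive functor $\overline{E^\circlearrowright}\colon\NChow^\cG(k)\to\mathrm{D}^\cG$; monoidality is inherited from $U^\cG$ and $E^\circlearrowright$ by the universal property of $U^\cG$. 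Passing to Grothendieck rings and composing with the virtual-character map
\[
K_0(\mathrm{D}^\cG)\too R_\bbC(\cG),\qquad [(V_0,V_1)]\mapsto [V_0]-[V_1],
\]
which is a ring homomorphism with respect to the Koszul-signed tensor product on $\mathrm{D}^\cG$, yields the desired $\phi$.

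Next I would verify that $\phi\circ\mu^\cG_{\mathrm{nc}}=\mu^\cG_\chi\otimes_\bbQ\bbC$. Since both sides are $\cG$-equivariant motivic measures, Bittner's presentation of $K_0\mathrm{Var}^\cG(k)$ (already invoked in the proof of Theorem~\ref{thm:measure}) reduces the check to classes of smooth projective $\cG$-varieties. For such an $X$, the Hochschild--Kostant--Rosenberg decomposition provides a natural---hence $\cG$-equivariant---isomorphism
\[
HP_i(\perf_\dg(X))\otimes_k\bbC\simeq\bigoplus_j H^{i+2j}_{dR}(X)\otimes_k\bbC\simeq\bigoplus_j H^{i+2j}(X^{\mathrm{an}},\bbC),
\]
the second identification being the de Rham--Betti comparison. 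Taking even-minus-odd therefore gives $\sum_i(-1)^i[H^i(X^{\mathrm{an}},\bbC)]$, which equals $\mu^\cG_\chi([X])\otimes_\bbQ\bbC$, since $X$ is smooth projective and hence $H^i_c(X^{\mathrm{an}},\bbQ)=H^i(X^{\mathrm{an}},\bbQ)$.

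The main work is not in any single deep step but in bookkeeping $\cG$-equivariance throughout: checking that $E^\circlearrowright$ really does land in $\mathrm{D}^\cG$ (and not merely in $\mathrm{D}$ with an a posteriori $\cG$-action), that both the HKR and the de Rham--Betti isomorphisms are $\cG$-equivariant (each follows from naturality, as $\phi_\sigma$ is induced by $\sigma^\ast$), and that the factorisation $\overline{E^\circlearrowright}$ through $U^\cG$ preserves the symmetric monoidal structure so that the resulting $\phi$ is genuinely a ring homomorphism.
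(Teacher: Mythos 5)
Your argument is correct and structurally parallel to the paper's, but you substitute $HP(-)\otimes_k\bbC$ for the paper's choice of Hochschild homology $HH$. The paper applies Proposition~\ref{prop:factorization-enhaced} to $HH$ (which is symmetric monoidal on \emph{all} of $\dgcat(k)$), tensors up to $\bbC$, and uses rigidity of $\NChow^\cG(k)$ together with the identification of dualizable and compact objects in $\cD(\bbC)$ to obtain a ring map $K_0(\NChow^\cG(k))\to K_0(\cD_c(\bbC)^\cG)\simeq R_\bbC(\cG)$; the final identification of virtual characters uses HKR in the form $HH_i(\perf_\dg(X))\simeq\oplus_{p-q=i}H^q(X,\Omega_X^p)$ followed by the parity observation $(-1)^{p-q}=(-1)^{p+q}$, which repackages the Euler characteristic of the Hochschild complex as the topological Euler characteristic. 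Your route trades that sign manipulation for the intrinsic $\bbZ/2$-grading of $HP$, so that even-minus-odd gives $\sum_i(-1)^iH^i_{dR}(X)$ with no juggling of $p$ and $q$ --- conceptually a bit cleaner. The one point you should make explicit is that $HP$ is only lax symmetric monoidal on $\dgcat(k)$ (the completed tensor product obstructs a general K\"unneth isomorphism); it becomes strongly monoidal when restricted to smooth proper dg categories in characteristic zero. Since $\NChow^\cG(k)$ is built from smooth proper dg categories, this restriction is all you need for $\overline{E^\circlearrowright}$ and hence $\phi$ to be a ring map, but the claim that $E$ is ``a symmetric monoidal additive invariant from $\dgcat(k)$'' as stated is too strong. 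The paper's choice of $HH$ sidesteps this subtlety entirely, at the modest cost of the $(-1)^{p-q}=(-1)^{p+q}$ bookkeeping.
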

\begin{proof}
Hochschild homology $HH\colon \dgcat(k) \to \cD(k)$ is an example of a symmetric monoidal additive invariant. Thanks to Proposition \ref{prop:factorization-enhaced}, it gives then rise to an additive symmetric monoidal functor $\overline{HH^\circlearrowright}\colon \Hmo_0^\cG(k) \to \cD(k)^\cG$ such that $\overline{HH^\circlearrowright}\circ U^\cG\simeq HH^\circlearrowright$. Consider the following composition
\begin{equation}\label{eq:comp-last}
\Hmo_0^\cG(k) \stackrel{\overline{HH^\circlearrowright}}{\too} \cD(k)^\cG \stackrel{-\otimes_k \bbC}{\too} \cD(\bbC)^\cG\,.
\end{equation}
It is well-known that an object of $\cD(k)$ is strongly dualizable if and only if it is compact. Since the category of $\cG$-equivariant noncommutative Chow motives is rigid (see Proposition \ref{prop:rigid}), the composition \eqref{eq:comp-last} yields a ring homomorphism 
\begin{equation}\label{eq:induced}
K_0(\NChow^\cG(k)) \too K_0(\cD_c(\bbC)^\cG) \simeq R_\bbC(\cG)\,.
\end{equation}
We claim that $\mu_\chi^\cG\otimes_\bbQ \bbC$ agrees with the composition of $\mu_{\mathrm{nc}}^\cG$ with \eqref{eq:induced}. Let $X$ be a smooth projective $\cG$-variety. Thanks to Bittner's presentation of $K_0\mathrm{Var}^\cG(k)$, it suffices to verify that the class of $HH^\circlearrowright(\cG \circlearrowright \perf_\dg(X))\otimes_k \bbC$ in the representation ring $R_\bbC(\cG)$ agrees with $\sum_i (-1)^i H^i_c(X^{\mathrm{an}}, \bbC)$. This follows from the identifications
\begin{eqnarray}
[HH^\circlearrowright (\cG \circlearrowright \perf_\dg(X))\otimes_k \bbC] & = & \sum_i (-1)^i HH_i(\perf_\dg(X))\otimes_k \bbC \nonumber \\
& = & \sum_i (-1)^i \oplus_{p-q=i} H^q(X, \Omega_X^p)\otimes_k \bbC \label{eq:HKR} \\ \nonumber
& = & \sum_{p,q} (-1)^{p-q} H^q(X,\Omega_X^p)\otimes_k \bbC \\ \nonumber
& = & \sum_{p,q} (-1)^{p+q} H^q(X,\Omega_X^p)\otimes_k \bbC\\ \nonumber
& = & \sum_i (-1)^i H^i_c(X^{\mathrm{an}}, \bbC)\,,\\ \nonumber
\end{eqnarray}
where \eqref{eq:HKR} is a consequence of the (functorial) Hochschild-Kostant-Rosenberg isomorphism $HH_i(\perf_\dg(X)) \simeq \oplus_{p-q=i} H^q(X, \Omega^p_X)$.
%
%
\end{proof}

\medbreak\noindent\textbf{Acknowledgments:} The author is very grateful to Dmitry Kaledin, Maxim Kontsevich and Michel Van den Bergh for stimulating discussions. He also would like to thank the Institut des Hautes \'Etudes Scientifiques (IH\'ES) for its hospitality and excellent working conditions, where this work was initiated.


\begin{thebibliography}{00}


\bibitem{Bara} V.~Baranovsky, {\em Orbifold cohomology as periodic cyclic homology}. Internat. J. Math. {\bf 14} (2003), no. 8, 791--812. 

\bibitem{Beilinson} A.~Beilinson, {\em Coherent sheaves on $\bbP^n$ and problems in linear algebra}. Funktsional. Anal. i Prilozhen. {\bf 12} (1978), no. 3, 68--69.

\bibitem{Bittner} F.~Bittner, {\em The universal Euler characteristic for varieties of characteristic zero}. Compos. Math. {\bf 140} (2004), no. 4, 1011--1032.

\bibitem{BO} A.~Bondal and D.~Orlov, {\em Semiorthogonal decomposition for algebraic varieties}. Available at arXiv:alg-geom/9506012.

\bibitem{BKaledin} R.~Bezrukavnikov and D.~Kaledin, {\em McKay equivalence for symplectic resolutions of quotient singularities}. Proc. Steklov Inst. Math. 2004, no. 3 ({\bf 246}), 13--33.

\bibitem{BRK} T.~Bridgeland, A.~King and M.~Reid, {\em The McKay correspondence as an equivalence of derived categories}. J. Amer. Math. Soc. {\bf 14} (2001), no. 3, 535--554. 

\bibitem{Brugieres} A. Brugui\`eres, {\em Tresses et structure enti\`ere sur la cat\'egorie des repr\'esentations de $SL_N$ quantique}. Comm. Algebra {\bf 28} (2000), no. 4, 1989--2028.

\bibitem{CR} W.~Chen and Y.~Ruan, {\em A new cohomology theory of orbifold}. Comm. Math. Phys. {\bf 248} (2004), no. 1, 1--31.

\bibitem{Deligne} P.~Deligne, {\em Action du groupe des tresses sur une cat{\'e}gorie}. Invent. Math. {\bf 128} (1997), 159--175.




\bibitem{Edidin-RR} D.~Edidin and W.~Graham, {\em Riemann-Roch for equivariant Chow groups}. Duke Math. J. {\bf 102} (2000), no. 3, 567--594. 

\bibitem{EG} \bysame, {\em Equivariant intersection theory}. Invent. Math. {\bf 131} (1998), 595--634.

\bibitem{Elagin} A.~Elagin, {\em On equivariant triangulated categories}. Available at arXiv:1403.7027(v2).

\bibitem{Elagin-semiorthogonal} \bysame, {\em Descent theory for semi-orthogonal decompositions}. Sb. Math. {\bf 203} (2012), no. 5-6, 645--676. 

\bibitem{Elagin-cohomological} \bysame, {\em Cohomological descent theory for a morphism of stacks and for equivariant derived categories}. Sb. Math. {\bf 202} (2011), no. 3-4, 495--526.

\bibitem{FT} B.~Feigin and B.~Tsygan, {\em Additive $K$-theory}. $K$-theory, arithmetic and geometry (Moscow, 1984--1986), Lecture Notes in Math., vol. {\bf 1289}, Springer, Berlin, 1987, pp. 67--209.

\bibitem{FT2} \bysame, {\em Cyclic homology of algebras with quadratic relations, universal enveloping algebras and group algebras}. $K$-theory, arithmetic and geometry (Moscow, 1984--1986), 210--239, 
Lecture Notes in Math., {\bf 1289}, Springer, Berlin, 1987.


\bibitem{Huybrechts} D.~Huybrechts, {\em Fourier-Mukai transforms in algebraic geometry}. Oxford Mathematical Monographs, The Clarendon Press, Oxford University Press, Oxford, 2006.

\bibitem{Iyer-Muller} J.~Iyer and S.~M\"uller-Stach, {\em Chow-K\"unneth decomposition for some moduli spaces}. Doc. Math. {\bf 14} (2009), 1--18. 


\bibitem{Kapranov} M.~Kapranov, {\em On the derived categories of coherent sheaves on some homogeneous spaces}. Invent. Math. {\bf 92} (1988), no. 3, 479--508.

\bibitem{ICM-Keller} B.~Keller, {\em On differential graded categories}. International Congress of Mathematicians (Madrid), Vol.~II,  151--190. Eur.~Math.~Soc., Z{\"u}rich (2006).



\bibitem{Kelly} G.~Kelly, {\em Basic concepts in enriched category theory}. Reprints in Theory and Applications of Categories, no.~10, 2005. Manuscript available at the webpage \url{http://www.tac.mta.ca/tac/reprints/articles/10/tr10.pdf}.

\bibitem{Khrisna} A.~Krishna and C.~Ravi, {\em On the $K$-theory of schemes with group scheme actions}. Available at arXiv:1509.05147.

\bibitem{IAS} M.~Kontsevich, {\em Noncommutative motives}. Talk at the IAS on the occasion of the $61^{\mathrm{st}}$ birthday of Pierre Deligne (2005). Available at \url{http://video.ias.edu/Geometry-and-Arithmetic}.    

\bibitem{ENS} \bysame, {\em Triangulated categories and geometry}. Course at the {\'E}cole Normale Sup{\'e}rieure, Paris, 1998. Notes available at \url{www.math.uchicago.edu/mitya/langlands.html}
      
\bibitem{Miami} \bysame, {\em Mixed noncommutative motives}. Talk at the Workshop on Homological Mirror Symmetry,  Miami, 2010. Notes available at \url{www-math.mit.edu/auroux/frg/miami10-notes}.  

\bibitem{finMot} \bysame, {\em Notes on motives in finite characteristic}.  Algebra, arithmetic, and geometry: in honor of Yu. I. Manin. Vol. II,  213--247, Progr. Math., {\bf 270}, BirkhŠuser Boston, MA, 2009.  

\bibitem{Laterveer} R.~Laterveer, {\em Equivariant motives}. Indag. Math. (N.S.) {\bf 9} (1998), no. 2, 255--275. 


\bibitem{Loday} J.-L.~Loday, {\em Cyclic homology}. Grundlehren der Mathematischen Wissenschaften, vol. {\bf 301}, Springer-Verlag, Berlin, 1998.


\bibitem{Crelle} M.~Marcolli and G.~Tabuada, {\em From exceptional collections to motivic decompositions via noncommutative motives}. Journal f\"ur die reine und angewandte Mathematik {\bf 701} (2015), 153--167.

\bibitem{AJM} \bysame, {\em Noncommutative motives, numerical equivalence, and semi-simplicity}. Amer. J. Math. {\bf 136} (2014), no. 1, 59--75. 

\bibitem{Meyer} R.~Meyer, {\em Categorical aspects of bivariant $K$-theory}. $K$-theory and noncommutative geometry, 1--39, EMS Ser. Congr. Rep., Eur. Math. Soc., Z\"urich, 2008. 

\bibitem{Merkurjev} A.~Merkurjev, {\em Equivariant $K$-theory}. Handbook of $K$-theory. Vol. 1, 2, 925--954, Springer, Berlin, 2005. 

\bibitem{Panin} I.~Panin, {\em On the algebraic $K$-theory of twisted flag varieties}. $K$-Theory {\bf 8} (1994), no. 6, 541--585.



\bibitem{Orlov} D.~Orlov, {\em Projective bundles, monoidal transformations, and derived categories of coherent sheaves}. Izv. Ross. Akad. Nauk Ser. Mat. {\bf 56} (1992), no. 4, 852--862.

\bibitem{Seidel} P.~Seidel, {\em Categorical Dynamics and Symplectic Topology}. Notes available at \url{http://www-math.mit.edu/~seidel/}.

\bibitem{Seidel1} \bysame, {\em Symmetries in Symplectic Geometry }. Talk at the Gelfand Centennial Conference, MIT, 2013. Available at \url{http://www.math.rutgers.edu/events/GelfandConference/}

\bibitem{Serre} J.~P.~Serre, {\em Linear representations of finite groups}. Graduate Texts in Mathematics, Vol. {\bf 42} Springer-Verlag, New York-Heidelberg, 1977.

\bibitem{Sosna} P.~Sosna, {\em Linearisations of triangulated categories with respect to finite group actions}. Math. Res. Lett. {\bf 19} (2012), no. 5, 1007--1020. 





\bibitem{book} G.~Tabuada, {\em Noncommutative motives}. With a preface by Yuri I. Manin. University Lecture Series, {\bf 63}. American Mathematical Society, Providence, RI, 2015.

\bibitem{Homogeneous} \bysame, {\em Additive invariants of toric and twisted projective homogeneous varieties via noncommutative motives}. J. Algebra {\bf 417} (2014), 15--38. 

\bibitem{CvsNC} \bysame, {\em Chow motives versus noncommutative motives}. J. Noncommut. Geom. {\bf 7} (2013), no. 3, 767--786.

\bibitem{Additive} \bysame, {\em Invariants additifs de DG-cat{\'e}gories}. Int. Math. Res. Not. (2005), no. {\bf 53}, 3309--3339.

\bibitem{Gysin} G.~Tabuada and M. Van den Bergh, {\em The Gysin triangle via localization and $\bbA^1$-homotopy invariance}. Available at arXiv:1510.04677. To appear in Transactions of the AMS.

\bibitem{Azumaya} \bysame, {\em Noncommutative motives of Azumaya algebras}. J. Inst. Math. Jussieu {\bf 14} (2015), no. 2, 379--403.





\bibitem{Thomason} R.~W.~Thomason, {\em Algebraic $K$-theory of group scheme actions}. Algebraic topology and algebraic $K$-theory (Princeton, N.J., 1983), 539--563, Ann. of Math. Stud., {\bf 113}, Princeton Univ. Press, Princeton, NJ, 1987. 



\end{thebibliography}
\end{document}

\end{proof}